\theoremstyle{plain}
\newtheorem{Thm}{Theorem}[section]
\newtheorem{Cor}[Thm]{Corollary}
\newtheorem{Lem}[Thm]{Lemma}
\newtheorem{Prop}[Thm]{Proposition}
\theoremstyle{definition}
\newtheorem{Def}[Thm]{Definition}
\newtheorem{Rem}[Thm]{Remark}
\theoremstyle{remark}
\numberwithin{Thm}{section}
\numberwithin{equation}{section}
\newcommand{\N}{\mathbb{N}}
\newcommand{\R}{\mathbb{R}}
\newcommand{\cl}{\overline}
\newcommand{\del}{\partial}
\newcommand{\loc}{\mathrm{loc}}
\DeclareMathOperator{\divergence}{div}
\newcommand{\laplacian}{\Delta}
\DeclareMathOperator*{\D}{D\!}
\DeclareMathOperator*{\spt}{supp}
\newcommand{\capacity}{\mathrm{cap}}
\newcommand{\M}{\mathcal{M}}
\newcommand{\W}{{\bf{W}}}
\newcommand{\I}{{\bf{I}}}
\newcommand{\XXint}[3]
{{
    \setbox0=\hbox{$#1{#2#3}{\int}$}
    \vcenter{\hbox{$#2#3$}}\kern-.5\wd0
}}
\newcommand{\trinorm}[1]
{{
    \left\vert\kern-0.20ex\left\vert\kern-0.20ex\left\vert
    #1 
    \right\vert\kern-0.20ex\right\vert\kern-0.20ex\right\vert
}}
\begin{document}
\begin{frontmatter}

%%%%%%%%%%%%%%%%%%%%%%%%%%%%%%%%%%%%%%%%
% Journal name, 
%%%%%%%%%%%%%%%%%%%%%%%%%%%%%%%%%%%%%%%%

% \journal{Nonlinear Analysis}

%%%%%%%%%%%%%%%%%%%%%%%%%%%%%%%%%%%%%%%%
% Author's name, Title, Date and other information
%%%%%%%%%%%%%%%%%%%%%%%%%%%%%%%%%%%%%%%%

\title{Existence of minimal solutions to quasilinear elliptic equations with several sub-natural growth terms \tnoteref{t1}}

\author{Takanobu Hara\corref{cor1}\fnref{fn1}}
\ead{takanobu.hara.math@gmail.com}

\author{Adisak Seesanea\fnref{fn1}}
\ead{adisak.seesanea@gmail.com}

\address{Department of Mathematics, Hokkaido University,
                Kita 8 Nishi 10  Sapporo, Hokkaido 060-0810, Japan}

\cortext[cor1]{Corresponding author}

\tnotetext[t1]{
This is a pre-print of an article published in \textit{Nonlinear Analysis}.
The final authenticated version is available online at: \url{https://doi.org/10.1016/j.na.2020.111847}.
}

%%%%%%%%%%%%%%%%%%%%%%%%%%%%%%%%%%%%%%%%
% Abstract
%%%%%%%%%%%%%%%%%%%%%%%%%%%%%%%%%%%%%%%%

\begin{abstract}
We study the existence of positive solutions to
quasilinear elliptic equations of the type
\[ 
-\Delta_{p} u  = \sigma u^{q} + \mu \quad \text{in} \ \mathbb{R}^{n},
\] 
in the sub-natural growth case $0 < q < p - 1$, 
where $\Delta_{p}u = \nabla \cdot ( |\nabla u|^{p - 2} \nabla u )$
is the $p$-Laplacian with $1 < p < n$,
and $\sigma$ and $\mu$ are nonnegative Radon measures on $\mathbb{R}^{n}$.
We construct minimal generalized solutions
under certain generalized energy conditions on $\sigma$ and $\mu$.
To prove this, we give new estimates for interaction between measures.
We also construct solutions to equations with several sub-natural growth terms
using the same methods.
\end{abstract}

\begin{keyword}
%% keywords here, in the form: keyword \sep keyword
Quasilinear elliptic equation \sep
Measure data \sep
$p$-Laplacian \sep
Wolff potential \sep
%% MSC codes here, in the form: \MSC code \sep code
%% or \MSC[2008] code \sep code (2000 is the default)
\MSC[2010] 35J92 \sep 35J20 \sep 42B37.
% \subjclass[2010]{Primary 35J92; Secondary 35J20, 42B37.} 
\end{keyword}

\end{frontmatter}

%%%%%%%%%%%%%%%%%%%%%%%%%%%%%%%%%%%%%%%%
% Title & Table
%%%%%%%%%%%%%%%%%%%%%%%%%%%%%%%%%%%%%%%%

% \maketitle
% \tableofcontents

%%%%%%%%%%%%%%%%%%%%%%%%%%%%%%%%%%%%%%%%
% Body
%%%%%%%%%%%%%%%%%%%%%%%%%%%%%%%%%%%%%%%%

%%%%%%%%%%%%%%%%%%%%%%%%%%%%%%%%%%%%%%%%
\section{Introduction and main results}
%%%%%%%%%%%%%%%%%%%%%%%%%%%%%%%%%%%%%%%%

In this paper, we consider the model quasilinear elliptic problem
\begin{equation}\label{eq:p-laplacian}
\begin{cases}
\displaystyle
- \laplacian_{p} u = \sigma u^{q} + \mu, \quad u > 0 \quad \text{in} \ \R^{n}, \\
\displaystyle
\liminf_{|x| \to \infty} u(x) = 0,
\end{cases}
\end{equation}
in the sub-natural growth case $0<q< p-1$,
where $\laplacian_{p} u = \divergence( |\nabla u|^{p-2} \nabla u )$ is
the $p$-Laplacian with $1 < p < n$,
and $\sigma$ and $\mu$ are nonnegative Radon measures on $\R^{n}$.
We construct minimal generalized solutions to \eqref{eq:p-laplacian}
under certain generalized energy conditions on $\sigma$ and $\mu$.

When $\mu = 0$, Eq. \eqref{eq:p-laplacian} becomes
\begin{equation}\label{eq:p-laplacian_homo}
\begin{cases}
\displaystyle
- \laplacian_{p} u = \sigma u^{q}, \quad u > 0 \quad \text{in} \ \R^{n}, \\
\displaystyle
\liminf_{|x| \to \infty} u(x) = 0.
\end{cases}
\end{equation}
This equation is related to the trace inequality
\begin{equation}\label{intro:trace_ineq}
\| u \|_{L^{1 + q}(\R^{n}, d \sigma)}
\leq
C \| \nabla u \|_{L^{p}(\R^{n})}
\quad 
\forall u \in C_{c}^{\infty}(\R^{n}),
\end{equation}
where $\| \cdot \|_{L^{1 + q}(\R^{n}, d \sigma)}$ is the $L^{1 + q}$ norm with respect to
the measure $\sigma$.
Cascante, Ortega and Verbitsky \cite{MR1734322}
and Verbitsky  \cite{MR1747901}
proved that
\eqref{intro:trace_ineq} holds
if and only if  
\begin{equation}\label{intro:cond_sigma_1}
\int_{\R^{n}}
\left( \W_{1, p} \sigma \right)^{\frac{(1 + q)(p - 1)}{p - 1 - q}}
\, d \sigma < \infty,
\end{equation}
where
$\W_{1, p} \sigma$ is the Wolff potential of $\sigma$
which is defined by
\[
\W_{1, p} \sigma (x)
:=
\int_{0}^{\infty}
\left(
\frac{ \sigma(B(x, r)) }{r^{n - p}}
\right)^{\frac{1}{p - 1}}
\frac{dr}{r},
\quad x \in \R^{n}.
\]
Cao and Verbitsky \cite{MR3311903} showed that
there exists a unique finite energy solution
$u \in \dot{W}_{0}^{1, p}(\R^{n})$  to \eqref{eq:p-laplacian_homo}
under \eqref{intro:cond_sigma_1},
where $\dot{W}_{0}^{1, p}(\R^{n})$ is the homogeneous Sobolev space.
They also proved the necessity of \eqref{intro:cond_sigma_1}.
Seesanea and Verbitsky \cite{MR4048382}
extend such results to Eq. \eqref{eq:p-laplacian};
there exists
a unique finite energy solution $u \in \dot{W}_{0}^{1, p}(\R^{n})$ to \eqref{eq:p-laplacian}
if and only if \eqref {intro:cond_sigma_1} and
\begin{equation}\label{intro:cond_mu_1}
\int_{\R^{n}} \W_{1, p} \mu \, d \mu < \infty
\end{equation}
are fulfilled.
Treating general measure data $\mu \geq 0$ causes
problems about interaction between $\sigma$ and $\mu$.
The key to proof was to control them in the dual of $\dot{W}_{0}^{1, p}(\R^{n})$.

However, Eq. \eqref{eq:p-laplacian_homo}
has various infinite energy solutions.
In fact, in the classic paper by Brezis and Kamin \cite{MR1141779},
the existence and uniqueness of bounded solutions to \eqref{eq:p-laplacian_homo}
was proved under $p = 2$ and $\| \mathbf{I}_{2} \sigma \|_{L^{\infty}(\R^{n})} < \infty$.
Here, $ \mathbf{I}_{2} \sigma$ is the Newtonian potential of $\sigma$.
Their solutions do not belong to $\dot{W}_{0}^{1, 2}(\R^{n})$ in general.
Boccardo and Orsina \cite{MR1272564} treated elliptic equations with singular coefficients
and applied concept of \textit{renormalized solutions}.
Their solutions are also called \textit{$p$-superharmonic functions} in now.
For details of such generalized solutions,
see \cite{MR1205885, MR1264000, MR1354907, MR1409661, MR1760541, MR1955596, MR2305115, MR2859927}.

Recently, the study of generalized solutions to \eqref{eq:p-laplacian_homo}
has made significant progress.
Cao and Verbitsky \cite{MR3567503}
defined the \textit{intrinsic} Wolff potential $\mathbf{K}_{1, p, q} \sigma$ of $\sigma$
and proved that
there exists a minimal $p$-superharmonic solution to \eqref{eq:p-laplacian_homo}
if and only if the potentials $\W_{1, p} \sigma$ and $\mathbf{K}_{1, p, q} \sigma$
are not identically infinite.
Unfortunately, behavior of $\mathbf{K}_{1, p, q} \sigma$
can not be easily calculated from its definition.
Cao and Verbitsky \cite{MR3556326} constructed weak solutions in $W^{1, p}_{\loc}(\R^{n})$
under a certain capacity condition and
gave two-sided pointwise estimates of such solutions.
Seesanea and Verbitsky \cite{MR3985926} gave a sufficient condition for
the existence of $L^{r}$-integrable $p$-superharmonic solutions.
From existence of such solutions, behavior of the potentials is derived conversely. 
For very recent progress in the study of $\mathbf{K}_{1, p, q} \sigma$,
see \cite{MR4030348, VERBITSKY2019111516}.

In this paper, we extend results in \cite{MR3985926} to Eq. \eqref{eq:p-laplacian}.
We consider the following conditions:
\begin{equation}\label{cond:dsigma-wolff}
\int_{\R^{n}} (\W_{1, p} \sigma)^{\frac{(\gamma + q)(p - 1)}{p - 1 - q}} \, d \sigma
< \infty,
\end{equation}
%and
\begin{equation}\label{cond:dmu-wolff}
\int_{\R^{n}} (\W_{1, p} \mu)^{\gamma} \, d \mu
< \infty,
\end{equation}
where $0 \leq \gamma < \infty$.
We denote by $\nu[u]$ the Riesz measure of a $p$-superharmonic function $u$
and interpret \eqref{eq:p-laplacian} as $\nu[u] = \sigma u^{q} + \mu$
(see Definition \ref{def:shs}).
Our main result is as follows.

\begin{Thm}\label{thm:main-p-laplacian}
Let $1 < p < n$, $0 < q < p - 1$.
Assume that \eqref{cond:dsigma-wolff} and \eqref{cond:dmu-wolff} hold
for some $0 < \gamma < \infty$ and that $(\sigma, \mu) \not\equiv (0, 0)$.
Then there exists a minimal $p$-superharmonic solution
$u$ to \eqref{eq:p-laplacian}.
Moreover, $u$ satisfies
\begin{equation}\label{generalized_energy}
\int_{\R^{n}} u^{\gamma} \, d \nu[u] < \infty
\end{equation}
and belongs to $L^{r, \rho}(\R^{n})$,
where $r = n(p - 1 + \gamma) / (n - p)$ and $\rho = p - 1 + \gamma$.
\end{Thm}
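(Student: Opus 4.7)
The plan is to construct the minimal solution by a monotone iteration along the lines of \cite{MR3311903,MR3985926,MR4048382}. Set $u_{0}\equiv 0$ and, inductively, let $u_{k+1}$ denote the minimal $p$-superharmonic function on $\R^{n}$ with $\liminf_{|x|\to\infty}u_{k+1}(x)=0$ and Riesz measure $\nu[u_{k+1}]=\sigma u_{k}^{q}+\mu$. Existence of such a minimal solution for a given locally finite measure whose Wolff potential is not identically $+\infty$ is by now classical (Kilpel\"ainen--Mal\'y, Trudinger--Wang, Phuc--Verbitsky); the non-triviality condition on the Wolff potential of $\sigma u_{k}^{q}+\mu$ will be checked along the induction from the uniform estimate below. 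The sequence $\{u_{k}\}$ is then nondecreasing, since $\nu[u_{k+1}]\geq \nu[u_{k}]$ implies $u_{k+1}\geq u_{k}$ by the comparison principle.

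The core of the proof is a uniform a priori bound on the generalized energies $E_{k}:=\int_{\R^{n}}u_{k}^{\gamma}\,d\nu[u_{k}]$. Combining the pointwise Wolff estimate $u_{k}\leq C\,\W_{1,p}\nu[u_{k}]$ with the quasi-subadditivity $\W_{1,p}(\sigma u_{k-1}^{q}+\mu)\lesssim \W_{1,p}(\sigma u_{k-1}^{q})+\W_{1,p}\mu$ and the decomposition $\nu[u_{k}]=\sigma u_{k-1}^{q}+\mu$ yields, after expansion, four integrals: a diagonal $\mu$-$\mu$ term directly controlled by \eqref{cond:dmu-wolff}; a diagonal $\sigma$-$\sigma$ term $\int (\W_{1,p}(\sigma u_{k-1}^{q}))^{\gamma}u_{k-1}^{q}\,d\sigma$ which, under \eqref{cond:dsigma-wolff}, reduces via the homogeneous analysis of \cite{MR3985926} to a recursive inequality of the form $E_{k}\leq A\,E_{k-1}^{q/(p-1)}+B$; and two mixed cross terms $\int (\W_{1,p}\mu)^{\gamma}u_{k-1}^{q}\,d\sigma$ and $\int (\W_{1,p}(\sigma u_{k-1}^{q}))^{\gamma}\,d\mu$ that must be absorbed into the same recursion. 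Since $q/(p-1)<1$, a bootstrap argument then yields $\sup_{k}E_{k}<\infty$. Controlling the two mixed terms purely in terms of \eqref{cond:dsigma-wolff}--\eqref{cond:dmu-wolff} is the step I expect to be hardest, and presumably corresponds to the ``new estimates for interaction between measures'' announced in the abstract; the natural route is H\"older's inequality combined with Hedberg--Wolff-type duality to shift the Wolff potential between $\sigma$ and $\mu$.

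Once $\sup_{k}E_{k}<\infty$, the pointwise Wolff bound shows that $u:=\lim_{k}u_{k}$ is finite quasi-everywhere; standard stability of $p$-superharmonic functions under monotone convergence, together with weak convergence of Riesz measures and the monotone convergence theorem on the right-hand side, ensures that $u$ is $p$-superharmonic with $\nu[u]=\sigma u^{q}+\mu$, while the decay of $\W_{1,p}\nu[u]$ gives $\liminf_{|x|\to\infty}u(x)=0$. The generalized energy bound \eqref{generalized_energy} is inherited from $\sup_{k}E_{k}<\infty$ via Fatou's lemma. Minimality follows by the parallel induction: any $p$-superharmonic solution $v$ of \eqref{eq:p-laplacian} satisfies $v\geq u_{0}=0$, and $v\geq u_{k}$ implies $\nu[v]\geq \nu[u_{k+1}]$, whence $v\geq u_{k+1}$. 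Finally, the Lorentz-space inclusion $u\in L^{r,\rho}(\R^{n})$ with $r=n(p-1+\gamma)/(n-p)$ and $\rho=p-1+\gamma$ follows from $u\leq C\,\W_{1,p}\nu[u]$, the energy bound \eqref{generalized_energy}, and standard mapping properties of $\W_{1,p}$ from weighted $L^{\gamma}(d\nu[u])$ into Lorentz spaces on Lebesgue measure.
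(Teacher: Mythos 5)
Your iteration scheme has a fatal flaw at the initialization step: starting from $u_{0}\equiv 0$ produces $\nu[u_{1}]=\sigma\cdot 0^{q}+\mu=\mu$, so if $\mu=0$ every iterate is $0$ and the scheme never leaves the trivial solution, whereas the theorem allows $\mu=0$ with $\sigma\neq 0$ (and is explicitly advertised as recovering the homogeneous result of \cite{MR3985926}). The paper avoids this by first producing a genuinely positive subsolution: it solves $-\Delta_{p}v=c(q,p)\,\sigma$ in a ball $2B$, sets $u_{0}=c_{1}v^{(p-1)/(p-1-q)}$ (which satisfies $-\Delta_{p}u_{0}\le\sigma u_{0}^{q}$ and $u_{0}\le c_{0}(\W_{1,p}\sigma)^{(p-1)/(p-1-q)}\le w$ for every supersolution $w$ by \cite[Thm.~2.3]{MR3311903}), and only then iterates $-\Delta_{p}u_{j+1}=\sigma u_{j}^{q}+\mu$. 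This starting point is what simultaneously gives positivity and the minimality comparison, so it cannot be dropped. A further structural difference is that the paper runs the iteration on balls $B(0,2^{k+1})$ with the truncated measures $\mathbf{1}_{\Omega(\sigma,k)}\sigma$, $\mathbf{1}_{\Omega(\mu,k)}\mu$ so that each iterate is a bounded finite-energy function in $\dot W^{1,p}_{0}(2B)$, and then exhausts $\R^{n}$; your global iteration requires, at every step, well-posedness of a measure-data problem on $\R^{n}$ with a possibly unbounded right-hand side $\sigma u_{k}^{q}$, which you would still have to justify is locally finite.

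On the a priori bound: the recursion you want is not directly on $E_{k}=\int u_{k}^{\gamma}\,d\nu[u_{k}]$ but on the intermediate quantity $\|u_{k}\|_{L^{\gamma+q}(d\sigma)}$. That is what closes via the weighted norm inequality (Lemma 4.2) and the sub-linear power $q/(p-1)<1$; the generalized energy and the Lorentz estimate are then obtained afterwards from Theorem \ref{thm:mutual_energy_estimate}, Corollary \ref{cor:quasi-linearity_of_energy_space} and Corollary \ref{cor:lorentz_integrability_of_wolff_potentials}. You correctly identify the cross terms $\int(\W_{1,p}\mu)^{\gamma+q}\,d\sigma$ (and their variants) as the crux, but the inequality needed there is precisely Theorem \ref{thm:mutual_energy_estimate}, whose proof is not a H\"older/Hedberg--Wolff manipulation: it hinges on a Picone/logarithmic-Caccioppoli inequality for $p$-superharmonic functions (Lemma \ref{lem:log_caccioppoli}, via \cite[Thm.~1.1]{MR1618334}) applied to truncations, and a monotone approximation by measures with bounded Wolff potential. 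Your proposal acknowledges the gap but does not supply this argument, and without it the uniform bound does not close. In short: right overall skeleton for $\mu\neq 0$, but the $\mu=0$ case breaks, the subsolution initialization and the localization on balls are missing, and the central interaction estimate is invoked rather than proved.
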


\begin{Rem}
(i)
Conversely, it follows from \cite[Theorem 2.3]{MR3311903} that
if there exists a $p$-superharmonic supersolution to \eqref{eq:p-laplacian}
satisfying \eqref{generalized_energy},
then \eqref{cond:dsigma-wolff} and \eqref{cond:dmu-wolff} 
must be fulfilled.
(ii)
As in \cite[Theorem 1.1]{MR3567503},
when $p \geq n$, there is no nontrivial supersolution to \eqref{eq:p-laplacian}.
\end{Rem}

Here, $L^{r, \rho}(\R^{n})$ denotes the Lorentz space with respect to the Lebesgue measure.
The authors do not know the same statement even if $\sigma = 0$.
Theorem \ref{thm:main-p-laplacian} includes the existence theorems in
\cite{MR3985926}  and \cite{MR4048382}
as the special cases $\mu = 0$ and $\gamma = 1$.
In general, our generalized solutions do not belong to $\dot{W}_{0}^{1, p}(\R^{n})$,
so we can not use the dual of $\dot{W}_{0}^{1, p}(\R^{n})$
to control interaction between $\sigma$ and $\mu$.
Hence, we derive an estimate of interaction directly using Wolff potentials
(see Theorem \ref{thm:mutual_energy_estimate}).
One of the authors used similar arguments for Green potentials in \cite{MR3881877}.
However, such arguments do not work for nonlinear potentials.
To overcome this difficulty, we use tools of nonlinear potential theory.
Theorem \ref{thm:mutual_energy_estimate}
can also be regarded as a generalization of \eqref{intro:trace_ineq}.
The Lorentz estimate for solutions is a direct consequence of it.

We also give variants of Theorem \ref{thm:main-p-laplacian}.
Theorem \ref{thm:main-p-laplacian_infty} and Proposition \ref{thm:main-p-laplacian_zero} 
are analogs of Theorem \ref{thm:main-p-laplacian} for $\gamma = \infty$ and $\gamma = 0$,
respectively.
In such cases, similar interaction between $\sigma$ and $\mu$
do not appear from difference of energy structures.
Theorem \ref{thm:main-p-laplacian_several_terms} is a generalization of
Theorem \ref{thm:main-p-laplacian} to equations of the form
\begin{equation}\label{eq:several_terms}
\begin{cases}
\displaystyle
- \laplacian_{p} u
=
\sum_{m = 1}^{M} \sigma^{(m)} u^{q_{m}} + \mu, \quad u > 0 \quad \text{in} \ \R^{n}, \\
\displaystyle
\liminf_{|x| \to \infty} u(x) = 0.
\end{cases}
\end{equation}
This result is new even if $p = 2$ and $\gamma = 1$.
However, the spirit of proof is the same as Theorem \ref{thm:main-p-laplacian}.
We also show the uniqueness of finite energy solutions.

\subsection*{Organization of the paper}
In Section \ref{sec:prelim}, we collect some facts of nonlinear potential theory to be used later.
In Section \ref{sec:energy_estimate}, we prove an estimate for mutual energy
and collect its consequences.
In Section \ref{sec:existence},
we prove Theorem \ref{thm:main-p-laplacian} using the results in the previous section.
In Sections \ref{sec:endpoint} and \ref{sec:several_terms},
we give some variants of Theorem \ref{thm:main-p-laplacian}.

\subsection*{Notation}
We use the following notation in this paper.
Let $\Omega$ be a domain (connected open subset) in $\R^{n}$. 
\begin{itemize}
\item
$B(x, R) := \{ y \in \R^{n} \colon |x - y| < R \}$.
\item
For $B = B(x, R)$ and $\lambda > 0$, we write $\lambda B := B(x, \lambda R)$. 
\item
$|A| :=$ the Lebesgue measure of a measurable set $A$.
\item
$\mathbf{1}_{A}(x) :=$ the indicator function of $A$.
\item
$C_{c}^{\infty}(\Omega) :=$
the set of all infinitely-differentiable functions with compact support in $\Omega$.
\item
$\M^{+}(\Omega) :=$ the set of all nonnegative Radon measure on $\Omega$.
\item
$A \approx B$ means $c_{1} A \leq B \leq c_{2} A$
for some constants $0 < c_{1} \leq c_{2} < \infty$ independent of $A$ and $B$.
\end{itemize}
For $\mu \in \M^{+}(\Omega)$, we denote by $L^{p}(\Omega, d \mu)$
the $L^{p}$ space with respect to $\mu$.
When $\mu$ is the Lebesgue measure,  
we write $L^{p}(\Omega, dx)$ as $L^{p}(\Omega)$ simply.
For a Banach space $X$, we denote by $X^{*}$ the dual of $X$.
We denote by $c$ and $C$ various constants with and without indices.

%%%%%%%%%%%%%%%%%%%%%%%%%%%%%%%%%%%%
\section{Preliminaries}\label{sec:prelim}
%%%%%%%%%%%%%%%%%%%%%%%%%%%%%%%%%%%%

\subsection{Function spaces}\label{sec:function_spaces}

Let $\Omega$ be a domain in $\R^{n}$, and let $1 < p < \infty$.
The Sobolev space $W^{1, p}(\Omega)$ ($W^{1, p}_{\loc}(\Omega)$) is
the space of all weakly differentiable functions $u$ such that
$u \in L^{p}(\Omega)$ and $|\nabla u| \in L^{p}(\Omega)$
($u \in L^{p}_{\loc}(\Omega)$ and $|\nabla u| \in L^{p}_{\loc}(\Omega)$).
The space $W_{0}^{1, p}(\Omega)$ 
is the closure of $C_{c}^{\infty}(\Omega)$ in
$W^{1, p}(\Omega)$.

We denote by $\dot{W}_{0}^{1, p}(\Omega)$
the set of all functions $u \in W^{1, p}_{\loc}(\Omega)$ such that
$|\nabla u| \in L^{p}(\Omega)$, and
$\| \nabla ( \varphi_{j} - u) \|_{L^{p}(\Omega)} \to 0$ as $j \to \infty$
for a sequence $\{ \varphi_{j} \}_{j = 1}^{\infty} \subset C_{c}^{\infty}(\Omega)$.
The space $\dot{W}_{0}^{1, p}(\Omega)$
is called the homogeneous Sobolev space (or Dirichlet space). 
When $1 < p < n$ or when $\Omega$ is bounded,
we define the norm of $\dot{W}_{0}^{1, p}(\Omega)$ by
$\| \nabla \cdot \|_{L^{p}(\Omega)}$. 
If $\Omega$ is bounded,
then $\dot{W}_{0}^{1, p}(\Omega) = W_{0}^{1, p}(\Omega)$
by the Poincar\'{e} inequality.
The following basic properties of functions in Sobolev spaces are parenthetically
used in our arguments.
Their proofs are similar to the ones of \cite[Theorems 1.18, 1.20 and 1.24]{MR2305115}.

\begin{Lem}\label{lem:cp_rule}
Suppose that $u \in \dot{W}_{0}^{1, p}(\Omega)$.
\begin{enumerate}[label=(\emph{\roman*})]
\item\label{lem:cp_rule_01}
Let $f \in C^{1}(\R)$ and $f(0) = 0$.
Assume that $f'$ is bounded on the range of $u$.
Then $f(u) \in \dot{W}_{0}^{1, p}(\Omega)$
and $\nabla f(u) = f'(u) \nabla u$ a.e. in $\Omega$.
\item\label{lem:cp_rule_02}
Let $w = \min\{ \max\{ u, m \}, M \}$,
where $m$ and $M$ are constants satisfying $m \leq 0 \leq M$.
Then $w \in \dot{W}_{0}^{1, p}(\Omega)$
and $\nabla  w = \mathbf{1}_{ \{ m < u < M \} } \nabla u$ a.e. in $\Omega$.
\item\label{lem:cp_rule_03}
Assume also that $u$ is bounded.
Suppose that $v \in W^{1, p}_{\loc}(\Omega) \cap L^{\infty}(\Omega)$
satisfies $|\nabla v|  \in L^{p}(\Omega)$.
Then $u v \in \dot{W}_{0}^{1, p}(\Omega)$
and $\nabla (uv) = v \nabla u + u \nabla v$ a.e. in $\Omega$.
\end{enumerate}
\end{Lem}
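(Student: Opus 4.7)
The proof strategy is uniform across the three parts: derive the gradient identity first in $W^{1,p}_{\loc}(\Omega)$ via the classical Sobolev calculus, and then upgrade membership to $\dot{W}_{0}^{1,p}(\Omega)$ by constructing an explicit $C_c^{\infty}(\Omega)$-approximating sequence. In every case the raw material is the given sequence $\{\varphi_j\} \subset C_c^{\infty}(\Omega)$ with $\|\nabla(\varphi_j-u)\|_{L^p(\Omega)} \to 0$, from which one may assume, after passing to a subsequence, that $\varphi_j \to u$ almost everywhere.

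For (i), I would first replace $f$ by a globally Lipschitz $C^1$ extension $\tilde f$ agreeing with $f$ on a neighborhood of the range of $u$ (possible because $f'$ is bounded there), so that $f(u)=\tilde f(u)$ a.e.\ and $\tilde f(0)=0$. The classical chain rule (cf.\ \cite[Theorem~1.18]{MR2305115}) applied to $\tilde f$ yields $f(u) \in W^{1,p}_{\loc}(\Omega)$ with $\nabla f(u)=f'(u)\nabla u \in L^{p}(\Omega)$. For the approximation, the compactly supported Lipschitz functions $\tilde f(\varphi_j)$ lie in $W_{0}^{1,p}(\Omega)$, and the decomposition
\[
\nabla\tilde f(\varphi_j) - \nabla f(u) = \tilde f'(\varphi_j)\bigl(\nabla\varphi_j - \nabla u\bigr) + \bigl(\tilde f'(\varphi_j) - f'(u)\bigr)\nabla u
\]
controls the first piece by $\|\tilde f'\|_{\infty}\|\nabla(\varphi_j-u)\|_{L^p}$ and the second by dominated convergence (dominant $2\|\tilde f'\|_{\infty}|\nabla u|\in L^p(\Omega)$, limit zero from the a.e.\ convergence of $\varphi_j$ and continuity of $\tilde f'$). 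A mollification of $\tilde f(\varphi_j)$ then completes the $C_c^\infty$-approximation.

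Part (ii) follows by applying (i) to a sequence of $C^1$-smoothings $f_\varepsilon$ of $t\mapsto\min\{\max\{t,m\},M\}$---each vanishing at $0$ because $m\leq 0\leq M$ and satisfying $|f_\varepsilon'|\leq 1$---and passing to the limit, using $\nabla u=0$ a.e.\ on $\{u=m\}\cup\{u=M\}$ to identify $\nabla w$. For (iii), the classical product rule for bounded $W^{1,p}_{\loc}$-functions gives $uv \in W^{1,p}_{\loc}(\Omega)$ with $\nabla(uv)=v\nabla u+u\nabla v\in L^{p}(\Omega)$. Using (ii) to truncate, we may assume $|\varphi_j|\leq\|u\|_\infty$ without losing $\|\nabla(\varphi_j-u)\|_{L^p}\to 0$; then $\varphi_j v$ is compactly supported in $W_{0}^{1,p}(\Omega)$, and
\[
\nabla(\varphi_j v)-\nabla(uv)=v\,\nabla(\varphi_j-u)+(\varphi_j-u)\nabla v
\]
tends to $0$ in $L^p$ by $\|v\|_\infty<\infty$ and dominated convergence with dominant $2\|u\|_\infty|\nabla v|\in L^p(\Omega)$; a final mollification supplies the $C_c^\infty$-approximants.

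The delicate point throughout is the $L^p$-convergence of gradients in the approximation step, especially the cross-terms $(\tilde f'(\varphi_j)-f'(u))\nabla u$ in (i) and $(\varphi_j-u)\nabla v$ in (iii), which must be brought to zero by dominated convergence against an integrable majorant. The two decisive devices are the global Lipschitz extension of $f$ in (i)---which supplies a uniform bound on $\tilde f'$---and the use of (ii) to truncate the approximants to be uniformly bounded in (iii); these furnish the integrable dominants that make the argument close.
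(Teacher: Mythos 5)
The paper gives no proof of this lemma; it only cites \cite[Theorems 1.18, 1.20, 1.24]{MR2305115} and says the arguments are similar. Your proof is the natural adaptation of those arguments from $W_0^{1,p}$ to $\dot W_0^{1,p}$ and is substantively correct.

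One step you assert without justification is that, after passing to a subsequence, $\varphi_j \to u$ a.e. The definition of $\dot W_0^{1,p}(\Omega)$ only gives $\|\nabla(\varphi_j - u)\|_{L^p(\Omega)} \to 0$, and a seminorm on gradients alone does not control the function values pointwise (on a connected domain it cannot distinguish $u$ from $u+\mathrm{const}$). To get a.e.\ convergence you must invoke the Sobolev inequality (when $1<p<n$, so that $\{\varphi_j\}$ is Cauchy in $L^{p^*}(\Omega)$ and, with the standard identification built into the definition, converges to $u$ there) or the Poincaré inequality (when $\Omega$ is bounded, so $\dot W_0^{1,p}=W_0^{1,p}$ and convergence is in $L^p$). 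Every application of the lemma in the paper falls into one of these two regimes, so the step is harmless, but it carries the entire burden in the dominated-convergence arguments for $(\tilde f'(\varphi_j)-f'(u))\nabla u$ in (i) and $(\varphi_j-u)\nabla v$ in (iii), so it should be stated. Similarly, the truncation of $\varphi_j$ in (iii) is not literally an application of (ii): you must show the truncated $\varphi_j$ still approximates $u$ in gradient norm, which again uses $\varphi_j\to u$ a.e.\ together with the standard fact that $\nabla u = 0$ a.e.\ on $\{|u|=\|u\|_\infty\}$. With those two observations made explicit, the proof is complete.
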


We also recall notion of Lorentz spaces \cite{MR2445437}.
\begin{Def}
Let $f$ be a measurable function on $\Omega$,
and let $0 < r, \rho \leq \infty$.
We define the \textit{Lorentz norm} of $f$ by
\[
\| f \|_{L^{r, \rho}(\Omega)}
=
\begin{cases}
\displaystyle
\left( \int_{0}^{\infty} \left( t^{\frac{1}{r}} f^{*}(t) \right)^{\rho} \frac{dt}{t} \right)^{\frac{1}{\rho}}
& \text{if} \ \rho < \infty, \\
\displaystyle
\sup_{t > 0} t^{\frac{1}{r}} f^{*}(t)
& \text{if} \ \rho = \infty,
\end{cases}
\]
where $f^{*}$ is the \textit{decreasing rearrangement} of $f$ which is defined by
\[
f^{*}(t) = \inf \{ \alpha > 0 \colon | \{ x \in \Omega \colon |f(x)| > \alpha \} | \leq t \}.
\]
The space of all $f$ with $\| f \|_{L^{r, \rho}(\Omega)} < \infty$
is denoted by $L^{r, \rho}(\Omega)$
and is called the \textit{Lorentz space} with indices $r$ and $\rho$.
\end{Def}

\subsection{$p$-Laplacian and $p$-superharmonic functions}\label{sec:nlpt}

For $u \in W^{1, p}_{\loc}(\Omega)$,
we define the $p$-Laplacian $\laplacian_{p}$
in the weak (distributional) sense, i.e., for every $\varphi \in C_{c}^{\infty}(\Omega)$,
\[
\langle - \laplacian_{p} u, \varphi \rangle
=
\int_{\Omega} |\nabla u|^{p - 2} \nabla u \cdot \nabla \varphi \, dx.
\]
A function $u \in W^{1, p}_{\loc}(\Omega)$ is called as
\textit{$p$-harmonic} if $u$ is a continuous weak solution to
\begin{equation}\label{eqn:p-harmonic}
- \laplacian_{p} u = 0 \quad \text{in} \ \Omega.
\end{equation}

For basic properties of the $p$-Laplacian including
comparison principles for weak solutions
and solvability of Dirichlet problems,
we refer to \cite[Chapter 3]{MR2305115} and \cite{MR567696}.

To treat measure data problems, we introduce $p$-superharmonic functions.
A function $u \colon \Omega \to ( - \infty, \infty]$ is called \textit{$p$-superharmonic} if
$u$ is lower semicontinuous in $\Omega$, is not identically infinite in any component of $\Omega$,
and $u$ satisfies the comparison principle on each subdomain $D \Subset \Omega$;
if $h \in C(\cl{D})$ is $p$-harmonic in $D$
and if $u \geq h$ on $\del D$, then $u \geq h$ in $D$.

By \cite[Theorem 7.22]{MR2305115},
if $u$ and $v$ are $p$-superharmonic in $\Omega$ and if $u \leq v$ a.e. in $\Omega$,
then $u(x) \leq v(x)$ for all $x \in \Omega$.
If $u \in W^{1, p}_{\loc}(\Omega)$ is a supersolution to \eqref{eqn:p-harmonic},
then it has a lower semicontinuous representative
and can be regarded as a $p$-superharmonic function
up to taking such a representative
(see \cite[Theorems 3.63 and 7.25]{MR2305115}).
If $u$ is a $p$-superharmonic function in $\Omega$,
then its truncation $\min\{ u, k \}$
is a supersolution to \eqref{eqn:p-harmonic} for each $k > 0$.
Hence, there exists a unique Radon measure $\nu[u]$ such that
\[
\int_{\Omega} |\D u|^{p - 2} \D u \cdot \nabla \varphi \, dx
=
\int_{\Omega} \varphi \, d \nu[u]
\quad \forall \varphi \in C_{c}^{\infty}(\Omega),
\]
where $\D u$ is the \textit{very weak gradient} of $u$ which is defined by
\[
\D u := \lim_{k \to \infty} \nabla \min\{ u, k \}.
\]
The measure $\nu[u]$ is called the \textit{Riesz measure} of $u$.
By definition, if $u \in W^{1, p}_{\loc}(\Omega)$,
then $\D u = \nabla u$ and $\nu[u] = - \laplacian_{p} u$
in the sense of weak solutions.

We say that a property holds \textit{quasieverywhere} (q.e.)
if it holds except on a set of $p$-capacity zero.
Here, for $E \subset \R^{n}$,
the (Sobolev) $p$-capacity is defined by
\[
C_{p}(E)
=
\inf
\int_{\R^{n}} (|u|^{p} + |\nabla u|^{p}) \, dx,
\]
where the infimum is taken over all
$u \in W^{1, p}(\R^{n})$
such that $u = 1$ in a neighborhood of $E$.
We note that every $u \in W^{1, p}_{\loc}(\Omega)$
has a \textit{quasicontinuous} representative,
which coincides with $u$  quasieverywhere
and that every $p$-superharmonic function $u$ is quasicontinuous
(see, e.g., \cite[Theorems 4.4 and 10.9]{MR2305115}).
Henceforth, we assume that $u$ is always chosen to be quasicontinuous.

\subsection{Wolff potentials}\label{sec:wolff_potential}

\begin{Def}
Let $1 < p < \infty$.
The Wolff potential $\W_{1, p} \sigma$ of $\sigma \in \M^{+}(\R^{n})$ is defined by
\[
\W_{1, p} \sigma (x)
:=
\int_{0}^{\infty}
\left(
\frac{ \sigma(B(x, r)) }{r^{n - p}}
\right)^{\frac{1}{p - 1}}
\frac{dr}{r}.
\]
\end{Def}

This nonlinear potential was first introduced by 
Havin and Maz'ya \cite{MR0409858}.
For any $\sigma \in \M^{+}(\R^{n})$,
$\W_{1, p} \sigma(x)$ is a lower semicontinuous function of $x$
(see \cite{MR727526}).
By a simple calculation,
for any $\sigma, \mu \in \M^{+}(\R^{n})$ and $\gamma, \beta \geq 0$,
\begin{equation}\label{quasi-linearity}
\W_{1, p}( \gamma \sigma + \beta \mu )(x)
\leq
c(p)
\left(
\gamma^{\frac{1}{p - 1}} \W_{1, p}\sigma(x)
+
\beta^{\frac{1}{p - 1}} \W_{1, p}\mu(x)
\right)
\quad
\forall x \in \R^{n}.
\end{equation}
Also, the following weak maximum principle for Wolff potentials holds:
\begin{equation}\label{wolff-max}
\W_{1, p}\sigma(x)
\leq
c \sup_{\spt \sigma} \W_{1, p}\sigma
\quad
\forall x \in \R^{n},
\end{equation}
where $c = c(n, p) > 0$
(see, e.g., \cite{VERBITSKY2019111516}).
Moreover, by the lower semicontinuity of $\W_{1, p}\sigma$,
we observe that $\sup_{\spt \sigma} \W_{1, p}\sigma = \| \W_{1, p}\sigma \|_{L^{\infty}(\R^{n}, d \sigma)}$.

It was shown in \cite[Theorem 1.11]{MR1747901} that
for any $\sigma \in \M^{+}(\R^{n})$,
\[
\begin{split}
\int_{K}  \frac{d \sigma}{ (\W_{1, p}\sigma)^{p - 1} }
\leq
C \capacity_{p}(K, \R^{n})
\end{split}
\]
for any compact set $K \subset \R^{n}$,
where $C = C(n, p)$ is a constant
and $\capacity_{p}(K, \R^{n})$ is the variational $p$-capacity of $(K, \R^{n})$.
From this inequality, one can easily deduce 
(by using a similar argument in \cite[Lemma 3.6]{MR3567503})
that \eqref{cond:dsigma-wolff} implies 
$\sigma$ must be absolutely continuous with respect to the $p$-capacity,
that is,
$\sigma(E) = 0$ whenever
$C_{p}(E) = 0$
for every Borel set $E \subset \R^{n}$. 

The following two-sided Wolff potential bounds
were established by Kilpel{\"a}inen and Mal{\'y} \cite{MR1205885,MR1264000}.

\begin{Thm}[{\cite[Theorem 1.6]{MR1264000}}]\label{pointwise_est_p-superharmonic} 
Let $1 < p < \infty$.
Suppose that $u$ is a nonnegative $p$-superharmonic function in $2B = B(x, 2R)$
and that $\mu$ is the Riesz measure of $u$.
Then
\[
\frac{1}{c_{K}} \W_{1, p}^{R} \mu(x)
\leq
u(x)
\leq
c_{K} \left(
\inf_{B} u
+
\W_{1, p}^{2R} \mu(x)
\right),
\] 
where $c_{K} = c_{K}(n, p) \geq 1$ and
$\W_{1, p}^{R} \mu$ is the \textit{truncated} Wolff potential of $\mu$ which is defined by
\[
\W_{1, p}^{R} \mu (x)
:=
\int_{0}^{R}
\left(
\frac{ \mu(B(x, r)) }{r^{n - p}}
\right)^{\frac{1}{p - 1}}
\frac{dr}{r}.
\]
\end{Thm}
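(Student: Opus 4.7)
The plan is to establish both inequalities by a dyadic iteration on the concentric balls $B_k := B(x, 2^{-k}R)$, $k \geq 0$, with the normalized masses
\[
\tau_k := \left( \frac{\mu(B_k)}{(2^{-k}R)^{n-p}} \right)^{\frac{1}{p-1}}.
\]
Since $\sum_k \tau_k$ is, up to a multiplicative constant, the truncated Wolff potential $\W_{1,p}^R \mu(x)$, the target reduces to proving telescoping estimates of the form $\inf_{B_{k+1}} u \gtrsim \inf_{B_k} u + \tau_k$ for the lower bound and, roughly, $\inf_{B_{k-1}} u \gtrsim \inf_{B_k} u - \tau_k$ for the upper bound.

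For the lower bound, I would introduce on each $B_k$ the $p$-harmonic replacement $h_k$ of $u$, namely the unique $p$-harmonic function in $B_k$ whose boundary values (in the quasicontinuous sense) agree with those of $u$. The comparison principle gives $h_k \leq u$ in $B_k$, while the Harnack inequality for nonnegative $p$-harmonic functions gives $h_k(x) \approx \inf_{B_{k+1}} h_k$. The quantitative input comes from the standard monotonicity bound for the $p$-Laplacian: testing the equations $-\laplacian_p u = \mu$ and $-\laplacian_p h_k = 0$ against $(u - h_k)_+$, which vanishes on $\del B_k$, together with the vector inequality $\langle |\xi|^{p-2}\xi - |\eta|^{p-2}\eta, \xi - \eta \rangle \gtrsim |\xi - \eta|^p$ (for $p \geq 2$; with an analogous weighted form for $1 < p < 2$), yields a $p$-energy lower bound for $u - h_k$ in terms of $\mu(B_k)$. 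Combining with a Sobolev–Morrey type embedding on $B_k$, one extracts $\inf_{B_{k+1}}(u - h_k) \gtrsim \tau_k$, and telescoping through the Harnack chain closes the lower bound.

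For the upper bound, the argument is dual but substantially more delicate, since the nonlinearity of $\laplacian_p$ forbids direct superposition. I would follow the Kilpel\"ainen--Mal\'y quasilinearization scheme: setting $M_k := \inf_{B_k} u$, derive a one-sided recursive inequality of the form $M_k \leq c_1 M_{k+1} + c_2 \tau_k$ by testing the equation on $B_k$ against a carefully chosen truncation of $(u - M_{k+1})_+$ and combining with Caccioppoli and Sobolev estimates to control the $p$-energy of the truncation by $\mu(B_k)$ plus boundary contributions. Summing the resulting recursion from $k = 0$ to a large index and letting the index tend to infinity produces $u(x) \leq c(\inf_B u + \sum_k \tau_k)$, which is exactly the Wolff upper bound.

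The main obstacle is without question the upper bound: one must convert pointwise largeness of $u(x)$ into a genuine mass concentration of $\mu$ at small scales while respecting the intrinsic $(p-1)$-scaling of $\laplacian_p$. A naive choice of test function loses a factor in the exponent and fails to reproduce the Wolff scaling $\tau_k = (\mu(B_k)/r_k^{n-p})^{1/(p-1)}$, so one is forced into the intricate level-set/truncation scheme of Kilpel\"ainen and Mal\'y, which simultaneously exploits the equation on many scales. That combinatorial truncation argument, rather than any single estimate, is the heart of the proof.
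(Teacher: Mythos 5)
This statement is a cited preliminary result (Theorem 1.6 of Kilpel\"ainen--Mal\'y, \cite{MR1264000}); the paper does not prove it, so there is no internal proof to compare against, only the attribution to the original reference.

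Evaluating your sketch on its own terms, the overall dyadic-iteration framework is indeed the right one, but the lower-bound argument as written has the inequality directions reversed. Subtracting the weak formulations of $-\Delta_p u = \mu$ and $-\Delta_p h_k = 0$ tested against $(u - h_k)_+$ and invoking the monotonicity inequality for the $p$-Laplacian yields
\[
\int_{B_k} |\nabla (u - h_k)|^p \, dx \lesssim \int_{B_k} (u - h_k) \, d\mu \leq \| u - h_k \|_{L^\infty(B_k)} \, \mu(B_k),
\]
which is an \emph{upper} bound on the energy of $u - h_k$, not a lower bound. Likewise, a Sobolev--Morrey embedding converts an energy bound into an \emph{upper} pointwise bound, never into a lower bound of the form $\inf_{B_{k+1}}(u - h_k) \gtrsim \tau_k$. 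To get the desired lower estimate one needs a genuinely different mechanism, e.g.\ comparison with the solution of $-\Delta_p v = \mu|_{B_{k+1}}$ in $B_k$ with zero boundary values combined with a capacitary or barrier argument in the annulus $B_k \setminus B_{k+1}$ that exploits the positivity of $\mu(B_{k+1})$ quantitatively; your sketch does not supply this step and cannot be repaired merely by keeping the same test function. For the upper bound you correctly identify that the Kilpel\"ainen--Mal\'y truncation/level-set iteration is the heart of the matter, but the sketch essentially defers to that argument by name ("carefully chosen truncation", "Caccioppoli and Sobolev estimates") without supplying the actual construction, so it does not constitute a proof. In short: the strategy is attributed correctly, but the lower-bound mechanism is wrong as stated, and the upper bound is only gestured at.
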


\section{Estimate for mutual energy and its consequences}\label{sec:energy_estimate}

The following Wolff energy estimate is our key ingredient.

\begin{Thm}\label{thm:mutual_energy_estimate}
Let $1 < p < n$, $0 < \gamma < \infty$ and $- \gamma < q < p - 1$.
Then for any $\mu, \sigma \in \M^{+}(\R^{n})$,
\[
\begin{split}
& 
\int_{\R^{n}} (\W_{1, p} \mu)^{\gamma + q} \, d \sigma \\
& \leq
C
\left(
\int_{\R^{n}} (\W_{1, p} \mu)^{\gamma} \, d \mu
\right)^{\frac{\gamma + q}{p - 1 + \gamma}}
\left(
\int_{\R^{n}} (\W_{1, p} \sigma)^{\frac{(\gamma + q)(p - 1)}{p - 1 - q}} \, d \sigma
\right)^{\frac{p - 1 - q}{p - 1 + \gamma}},
\end{split}
\]
where $C$ is a positive constant depending only on $n$, $p$, $\gamma$ and $q$.
\end{Thm}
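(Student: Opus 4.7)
The plan is to reduce the estimate to a discrete analogue via a dyadic decomposition of the Wolff potentials, then apply H\"older's inequality with a carefully chosen pair of conjugate exponents, and finally resum the resulting dyadic sums back into the continuous Wolff potential quantities on the right-hand side.

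First I would discretize. For a fixed dyadic lattice $\mathcal{D}$ of $\R^{n}$ (or a finite union of shifted lattices) one has the pointwise two-sided bound
\[
\W_{1,p}\mu(x) \approx \sum_{Q \in \mathcal{D},\ Q \ni x} a_Q, \qquad a_Q := \Bigl(\frac{\mu(Q)}{\ell(Q)^{n-p}}\Bigr)^{1/(p-1)},
\]
and analogously $\W_{1,p}\sigma(x) \approx \sum_{Q \ni x} b_Q$ with $b_Q := (\sigma(Q)/\ell(Q)^{n-p})^{1/(p-1)}$. The left-hand side then becomes $\int (\sum_{Q \ni x} a_Q)^{\gamma+q}\,d\sigma(x)$. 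Using the nested structure of dyadic cubes and an Abel-summation argument, this is controlled (up to a constant depending on $\gamma+q$) by
\[
\sum_{Q \in \mathcal{D}} a_Q\,S_Q^{\gamma+q-1}\,\sigma(Q), \qquad S_Q := \sum_{R \in \mathcal{D},\ R \supseteq Q} a_R \le \inf_{x \in Q}\W_{1,p}\mu(x),
\]
at least in the range $\gamma+q \ge 1$; the complementary range $0 < \gamma+q < 1$ is handled directly by the subadditivity $(\sum a_Q)^{\gamma+q} \le \sum a_Q^{\gamma+q}$.

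Next I would apply H\"older's inequality with the conjugate exponents
\[
\alpha = \frac{p-1+\gamma}{\gamma+q}, \qquad \beta = \frac{p-1+\gamma}{p-1-q}, \qquad \tfrac{1}{\alpha}+\tfrac{1}{\beta}=1,
\]
distributing the factors $a_Q$, $S_Q$, $b_Q$, $\ell(Q)^{n-p}$ and $\sigma(Q)$ between two weights $X_Q$ and $Y_Q$ with $X_Q Y_Q = a_Q S_Q^{\gamma+q-1}\sigma(Q)$. The goal is to arrange the split so that $\sum_Q X_Q^{\alpha}$ reassembles, via the dyadic equivalences and the identities $\mu(Q) = a_Q^{p-1}\ell(Q)^{n-p}$, $\sigma(Q) = b_Q^{p-1}\ell(Q)^{n-p}$, into the self-energy $\int(\W_{1,p}\mu)^{\gamma}\,d\mu$, and $\sum_Q Y_Q^{\beta}$ reassembles (after a further Abel-summation of the form $\sum_Q b_Q^{p}\ell(Q)^{n-p}T_Q^{s-1} \approx \int(\W_{1,p}\sigma)^{s+p-1}d\sigma$ with $T_Q := \sum_{R \supseteq Q} b_R$) into $\int(\W_{1,p}\sigma)^{(\gamma+q)(p-1)/(p-1-q)}\,d\sigma$. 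Raising the two sums to the powers $1/\alpha = (\gamma+q)/(p-1+\gamma)$ and $1/\beta = (p-1-q)/(p-1+\gamma)$ then delivers exactly the stated inequality.

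The main technical obstacle is the bookkeeping in the H\"older split: one must pick the correct fractional powers of $a_Q$, $S_Q$, $b_Q$, $\ell(Q)^{n-p}$ and $\sigma(Q)$ so that, simultaneously, the product $X_Q Y_Q$ matches the summand $a_Q S_Q^{\gamma+q-1}\sigma(Q)$ and, after raising to $\alpha$ and $\beta$, each factor telescopes back to a recognisable continuous Wolff-energy integral. Getting $\sigma$ to appear on the $Y$-side through its potential $\W_{1,p}\sigma$ (rather than as raw mass) is the most delicate point and must be done by converting $\sigma(Q)$ into $b_Q^{p-1}\ell(Q)^{n-p}$ and rearranging the resulting sum. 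Supporting tools for this step are the weak maximum principle \eqref{wolff-max}, which lets one move freely between suprema of $\W_{1,p}\sigma$ over $\R^{n}$ and over $\spt\sigma$, and the fact (a consequence of \eqref{cond:dsigma-wolff}) that $\sigma$ charges no set of $p$-capacity zero, so that every pointwise estimate for Wolff potentials may be used $\sigma$-almost everywhere.
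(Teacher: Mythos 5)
Your reduction to a dyadic model is standard and correct, but the single H\"older split you propose cannot be made to close, and this is a genuine gap, not mere bookkeeping. Once you impose the requirements that $\sum_Q X_Q^{\alpha}$ resum to the dyadic form of $\int(\W_{1,p}\mu)^{\gamma}\,d\mu$, namely $\sum_Q a_Q^{p}S_Q^{\gamma-1}\ell(Q)^{n-p}$, and that $\sum_Q Y_Q^{\beta}$ resum to $\sum_Q b_Q^{p}T_Q^{s-1}\ell(Q)^{n-p}$ with $s=(\gamma+q)(p-1)/(p-1-q)$, the factors $X_Q$ and $Y_Q$ are essentially determined. Writing $\sigma(Q)=b_Q^{p-1}\ell(Q)^{n-p}$, the cube-by-cube inequality $X_Q Y_Q\gtrsim a_Q S_Q^{\gamma+q-1}\sigma(Q)$ needed to apply H\"older reduces, after cancelling $\ell(Q)^{n-p}$ and computing the exponents, to
\[
\left(\frac{a_Q\,T_Q}{S_Q\,b_Q}\right)^{E}\gtrsim 1,
\qquad
E:=\frac{(p-1)(\gamma-1)+pq}{\,p-1+\gamma\,}.
\]
Except for the single degenerate value $\gamma=1-pq/(p-1)$, where $E=0$, this cannot hold uniformly over all cubes with $a_Q,b_Q>0$: taking $\mu$ with a large component far from $Q$ makes $a_Q/S_Q$ arbitrarily small, while taking $\sigma$ entirely supported inside $Q$ makes $T_Q\approx b_Q$ by a geometric-series computation, and these two effects are independent. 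So H\"older's inequality simply does not apply with this split, and no other split is compatible with the required resummations. You correctly flagged this as the delicate step, but it is not a matter of choosing the right fractional powers; the obstruction is structural.

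The paper avoids the problem by abandoning the dyadic model in favour of the $p$-superharmonic potentials themselves: with $u$ and $v$ solving $-\Delta_p u=\mu$ and $-\Delta_p v=\sigma$ (after a truncation step), one has $u\approx\W_{1,p}\mu$ and $v\approx\W_{1,p}\sigma$ by Theorem \ref{pointwise_est_p-superharmonic}, and H\"older is applied against the weight $d\omega=v^{1-p}\,d\nu[v]$ (Lemma \ref{lem:trace_estimate}). The missing leverage is then supplied by a Picone-type nonlinear integration by parts (Lemma \ref{lem:log_caccioppoli}),
\[
\int_{\Omega}|\varphi|^{p}\,v^{1-p}\,d\nu[v]\ \le\ \int_{\Omega}|\nabla\varphi|^{p}\,dx,
\]
applied to $\varphi=u^{(p-1+\gamma)/p}$, which converts $\int u^{p-1+\gamma}\,d\omega$ into $\int|\nabla u|^{p}u^{\gamma-1}\,dx\approx\int(\W_{1,p}\mu)^{\gamma}\,d\mu$. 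This tool genuinely exploits the $p$-superharmonicity of $v$ through the identity $\int\varphi\,d\nu[v]=\int|\nabla v|^{p-2}\nabla v\cdot\nabla\varphi\,dx$ and has no cube-by-cube analogue; it is precisely the extra ingredient your dyadic scheme cannot manufacture. If you want to rescue the dyadic route you would need an additional structural device of comparable strength (a stopping-time or corona decomposition on the lattice), not a rearrangement of exponents.
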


To derive this estimate,
we prove the following simple lemma.

\begin{Lem}\label{lem:log_caccioppoli}
Let $1 < p < \infty$.
Let $u \in \dot{W}_{0}^{1, p}(\Omega)$,
and let $v \in W^{1, p}_{\loc}(\Omega)$
be a nonnegative $p$-superharmonic function in $\Omega$.
Assume also that $\| \nabla v \|_{L^{p}(\Omega)}$ and $\nu[v](\Omega)$ are finite.
Then
\[
\int_{\Omega}
|u|^{p} v^{1 - p} \, d \nu[v]
\leq
\int_{\Omega}
|\nabla u|^{p}
\, dx.
\]
\end{Lem}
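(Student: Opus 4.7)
The plan is to prove this as a Caccioppoli-type identity obtained by testing the equation $-\Delta_p v = \nu[v]$ against a regularized version of $|u|^p v^{1-p}$, with Young's inequality producing a cancellation of the ``bad'' gradient-of-$v$ term.

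First I would reduce to the case $u \in C_c^\infty(\Omega)$. For a general $u \in \dot W_0^{1,p}(\Omega)$, pick $\{\varphi_j\}\subset C_c^\infty(\Omega)$ with $\|\nabla(\varphi_j-u)\|_{L^p(\Omega)}\to 0$; a subsequence converges $p$-quasieverywhere, hence $\nu[v]$-a.e.\ (since $\nu[v]$ vanishes on sets of $p$-capacity zero, as $v$ is $p$-superharmonic). The inequality for each $\varphi_j$ will then pass to $u$ via Fatou's lemma on the left and the $L^p$ convergence of gradients on the right.

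For $u\in C_c^\infty(\Omega)$ and $\epsilon>0$, the function $\varphi_\epsilon := |u|^p(v+\epsilon)^{1-p}$ is in $W_0^{1,p}(\Omega)\cap L^\infty(\Omega)$ with compact support, since $v+\epsilon\ge\epsilon$ and $v\in W^{1,p}_{\loc}(\Omega)$. Using Lemma~\ref{lem:cp_rule} to compute
\[
\nabla\varphi_\epsilon \;=\; p\,|u|^{p-2}u\,(v+\epsilon)^{1-p}\nabla u \;-\; (p-1)\,|u|^p(v+\epsilon)^{-p}\nabla v,
\]
and testing the weak formulation against $\varphi_\epsilon$ yields
\[
\int_\Omega |u|^p(v+\epsilon)^{1-p}\,d\nu[v] + (p-1)\int_\Omega |u|^p(v+\epsilon)^{-p}|\nabla v|^p\,dx
= p\int_\Omega |u|^{p-2}u\,(v+\epsilon)^{1-p}|\nabla v|^{p-2}\nabla v\cdot\nabla u\,dx.
\]
Now I would apply Young's inequality $p\,a^{p-1}b\le (p-1)a^p+b^p$ to the right-hand side with $a=|u|(v+\epsilon)^{-1}|\nabla v|$ and $b=|\nabla u|$. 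The resulting $(p-1)a^p$ term is exactly the second integral on the left, and cancels, leaving
\[
\int_\Omega |u|^p(v+\epsilon)^{1-p}\,d\nu[v] \;\le\; \int_\Omega |\nabla u|^p\,dx.
\]
Sending $\epsilon\to 0^+$ and noting that $(v+\epsilon)^{1-p}$ increases monotonically to $v^{1-p}$ (with value $+\infty$ where $v=0$), monotone convergence yields the claim.

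The main subtlety is admissibility of $\varphi_\epsilon$ as a test function in the $\nu[v]$-pairing and the quasieverywhere passage to the limit in the density step; both are handled by the finiteness of $\|\nabla v\|_{L^p(\Omega)}$ and $\nu[v](\Omega)$ together with the fact that $\nu[v]$ charges no set of $p$-capacity zero, so that quasicontinuous representatives are well-defined $\nu[v]$-a.e. The algebraic heart of the argument—the cancellation after Young's inequality—is what forces the sharp constant $1$ on the right-hand side.
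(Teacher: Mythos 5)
Your proof is correct and follows essentially the same route as the paper: test the extended weak formulation of $-\Delta_p v = \nu[v]$ against $|u|^p(v+\epsilon)^{1-p}$, cancel the $\nabla v$ contribution, and pass to the limit $\epsilon\to 0$ by monotone convergence. The only cosmetic differences are that the paper invokes the Picone-type inequality of Allegretto--Huang as a black box (your Young's-inequality step is exactly its proof) and regularizes by truncating $u_{\pm}$ to $\min\{u_\pm,M\}$ instead of approximating $u$ by $C_c^\infty$ functions; one small inaccuracy in your first paragraph is that $\nu[v]$ charges no $p$-capacity-null set not because $v$ is $p$-superharmonic (Dirac masses are Riesz measures of $p$-superharmonic functions) but, as you correctly say in your closing paragraph, because $\nabla v\in L^p(\Omega)$ and $\nu[v](\Omega)<\infty$ put $\nu[v]$ in $\bigl(\dot W_0^{1,p}(\Omega)\bigr)^*$.
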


\begin{proof}
We set $u_{+}^{M} = \min\{ u_{+}, M \}$ and $v_{M} = v + M^{-1}$,
where $u_{+} = \max\{ u, 0 \}$ and $M$ is a positive constant.
Then
$(u_{+}^{M})^{p} (v_{M})^{1 - p} \in \dot{W}_{0}^{1, p}(\Omega)$
by Lemma \ref{lem:cp_rule}.
Since $\D v = \nabla v = \nabla v_{M} \in L^{p}(\Omega)$,
by density arguments as in \cite[Lemma 21.14]{MR2305115},
\[
\int_{\Omega}
\varphi
\, d \nu[v]
=
\int_{\Omega}
|\nabla v_{M}|^{p - 2} \nabla v_{M}
\cdot 
\nabla \varphi
\, dx
\quad \forall \varphi \in \dot{W}_{0}^{1, p}(\Omega).
\]
Substituting $(u_{+}^{M})^{p} (v_{M})^{1 - p}$ into $\varphi$
and
using a Picone type inequality
(see \cite[Theorem 1.1]{MR1618334}), we get
\[
\int_{\Omega} (u_{+}^{M})^{p} (v_{M})^{1 - p} \, d \nu[v]
\leq
\int_{\Omega} |\nabla (u_{+}^{M}) |^{p} \, dx.
\]
Taking the limit $M \to \infty$, we arrive at
\[
\int_{\Omega}
u_{+}^{p} v^{1 - p} \, d \nu[v]
\leq
\int_{\Omega}
|\nabla u_{+}|^{p}
\, dx.
\]
Applying the same argument to
$u_{-} = (-u)_{+}$, we get the desired estimate.
\end{proof}

\begin{Lem}\label{lem:trace_estimate}
Let $1 < p < \infty$, $0 < \gamma < \infty$ and $- \gamma < q < p - 1$.
Let $u \in \dot{W}_{0}^{1, p}(\Omega) \cap L^{\infty}(\Omega)$,
and let $v \in W^{1, p}_{\loc}(\Omega)$
be a nonnegative $p$-superharmonic function in $\Omega$.
Assume also that $\| \nabla v \|_{L^{p}(\Omega)}$ and $\nu[v](\Omega)$ are finite.
Then
\[
\begin{split}
& \int_{\Omega} |u|^{\gamma + q} \, d \nu[v] \\
& \leq
C
\left(
\int_{\Omega} |\nabla u|^{p} |u|^{\gamma - 1} \, dx
\right)^{\frac{\gamma + q}{p - 1 + \gamma}}
\left(
\int_{\Omega} v^{\frac{(\gamma + q)(p - 1)}{p - 1 - q}} \, d \nu[v]
\right)^{\frac{p - 1 - q}{p - 1 + \gamma}},
\end{split}
\]
where $C$ is a positive constant depending only on $p$, $\gamma$ and $q$.
\end{Lem}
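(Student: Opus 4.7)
The plan is to reduce the estimate to Lemma~\ref{lem:log_caccioppoli} (a Picone-type Hardy inequality) via a single application of H\"older's inequality, splitting $|u|^{\gamma+q}$ so that one factor matches the left-hand side of that lemma once $u$ is raised to a suitable power. Set
\[
\alpha_1 := \frac{\gamma+q}{p-1+\gamma}, \qquad \alpha_2 := 1 - \alpha_1 = \frac{p-1-q}{p-1+\gamma};
\]
the hypothesis $-\gamma < q < p-1$ forces both exponents into $(0,1)$, so $(1/\alpha_1, 1/\alpha_2)$ is a conjugate pair. The elementary factorization
\[
|u|^{\gamma+q} = \bigl(|u|^{p+\gamma-1}\, v^{1-p}\bigr)^{\alpha_1} \bigl(v^{(p-1)(\gamma+q)/(p-1-q)}\bigr)^{\alpha_2}
\]
is verified by matching exponents of $|u|$ and $v$ separately, and H\"older's inequality against $d\nu[v]$ then produces exactly the second factor of the claimed bound, together with a leading factor $\bigl(\int_{\Omega} |u|^{p+\gamma-1} v^{1-p} \, d\nu[v]\bigr)^{\alpha_1}$.

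The remaining task is to bound $\int_{\Omega} |u|^{p+\gamma-1} v^{1-p} \, d\nu[v]$ by $\int_{\Omega} |u|^{\gamma-1}|\nabla u|^p \, dx$. This is precisely what Lemma~\ref{lem:log_caccioppoli} delivers when applied with the test function $w := |u|^{s}$ for $s := (p+\gamma-1)/p$, because formally $|\nabla w|^p = s^p |u|^{\gamma-1} |\nabla u|^p$. Substitution back into the H\"older bound then yields the stated inequality with $C = s^{\alpha_1 p}$, so the entire proof is this two-line computation together with one legitimacy check.

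That legitimacy check --- verifying $w = |u|^s \in \dot{W}_0^{1,p}(\Omega)$ so that Lemma~\ref{lem:log_caccioppoli} applies --- is the main obstacle. When $\gamma \geq 1$ we have $s \geq 1$ and the map $t \mapsto |t|^s \sign(t)$ is $C^1$ with derivative bounded on the range of the bounded function $u$, so Lemma~\ref{lem:cp_rule}(i) places $w$ in $\dot{W}_0^{1,p}(\Omega)$ directly and the identity $|\nabla w|^p = s^p |u|^{\gamma-1} |\nabla u|^p$ holds a.e. When $0 < \gamma < 1$, i.e.\ $s < 1$, the function $|t|^s$ fails to be $C^1$ at the origin, and I would approximate it by $F_\epsilon(t) := (t^2 + \epsilon^2)^{s/2} - \epsilon^{s}$, which is $C^1$ with $F_\epsilon(0) = 0$ and satisfies $|F_\epsilon'(t)| \leq s\epsilon^{s-1}$, so Lemma~\ref{lem:cp_rule}(i) puts $F_\epsilon(u)$ in $\dot{W}_0^{1,p}(\Omega)$. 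A direct computation shows $F_\epsilon(t) \nearrow |t|^s$ and $|F_\epsilon'(t)|^p \nearrow s^p |t|^{\gamma-1}$ monotonically as $\epsilon \downarrow 0$. Applying Lemma~\ref{lem:log_caccioppoli} to $F_\epsilon(u)$ and passing to the limit by monotone convergence on both sides (using $|\nabla u| = 0$ a.e.\ on $\{u=0\}$ to interpret $|u|^{\gamma-1}|\nabla u|^p$ as zero there) recovers the required bound; the case in which the right-hand side of the final inequality is infinite is of course vacuous.
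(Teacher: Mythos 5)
Your argument is correct and is essentially the paper's proof: the same H\"older split with weight $v^{1-p}\,d\nu[v]$ (written equivalently), followed by an application of Lemma~\ref{lem:log_caccioppoli} to the power $|u|^{(p-1+\gamma)/p}$. The only cosmetic difference is the regularization for $\gamma<1$ — the paper truncates with $(u^{(p-1+\gamma)/p}-\epsilon)_{+}$ after reducing to $u\geq 0$, while you smooth with $(u^{2}+\epsilon^{2})^{s/2}-\epsilon^{s}$ — but both are routine and lead to the same monotone-convergence passage to the limit.
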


\begin{proof}
Without loss of generality, 
we may assume that both integrals on the right-hand side are finite
and that $u \geq 0$.
Applying H\"{o}lder's inequality to $d \omega = v^{1 - p} d \nu[v]$, we get
\begin{equation}\label{trace_estimate_1}
\begin{split}
&
\int_{\Omega}
u^{\gamma + q}
\, d \nu[v]
=
\int_{\Omega}
u^{\gamma + q}
v^{p - 1}
\, d \omega \\
& \leq
\left(
\int_{\Omega}
u^{p - 1 + \gamma}
\, d \omega
\right)^{\frac{\gamma + q}{p - 1 + \gamma}}
\left(
\int_{\Omega} v^{\frac{(p - 1 + \gamma)(p - 1)}{p - 1 - q}} \, d \omega
\right)^{\frac{p - 1 - q}{p - 1 + \gamma}}.
\end{split}
\end{equation}
For each $\epsilon > 0$, we set
$w_{\epsilon} = (u^{\frac{p - 1 + \gamma}{p}} - \epsilon)_{+}$.
Then by Lemmas \ref{lem:cp_rule} and \ref{lem:log_caccioppoli},
\[
\int_{\Omega}
w_{\epsilon}^{p} v^{1 - p} \, d \nu[v]
\leq
\int_{\Omega}
|\nabla w_{\epsilon}|^{p}
\, dx.
\]
Since
\[
\nabla w_{\epsilon}
=
\frac{p - 1 + \gamma}{p} \nabla u u^{\frac{\gamma - 1}{p}}
\mathbf{1}_{ \{ u^{\frac{p - 1 + \gamma}{p}} > \epsilon \} }
\quad \text{a.e. in} \ \Omega,
\]
by the monotone convergence theorem,
\begin{equation}\label{trace_estimate_2}
\begin{split}
\int_{\Omega}
u^{p - 1 + \gamma} v^{1 - p} \, d \nu[v]
& \leq
\left( \frac{p - 1 + \gamma}{p} \right)^{p}
\int_{\Omega}
|\nabla u|^{p} u^{\gamma - 1}
\, dx.
\end{split}
\end{equation}
From \eqref{trace_estimate_1} and \eqref{trace_estimate_2},
we obtain the desired estimate.
\end{proof}

Let $1 < p < n$.
For $k \in \N$,
we set $\mu_{k} = \mathbf{1}_{\Omega(\mu, k)} \mu$, where
\begin{equation}\label{cutoff_set}
\Omega(\mu, k)
=
\left\{
x \in \R^{n} \colon \W_{1, p} \mu(x) \leq k
\right\}
\cap
\overline{B(0, 2^{k})}.
\end{equation}
Then
\[
\int_{\R^{n}} \W_{1, p} \mu_{k} \, d \mu_{k}
=
\int_{\Omega(\mu, k)} \W_{1, p} \mu_{k} \, d \mu
\leq
k \mu( \Omega(\mu, k) ) < \infty
\]
which is equivalent to
$\mu_{k} \in (\dot{W}_{0}^{1, p}(\R^{n}))^{*}$
by the Hedberg-Wolff theorem 
(see \cite{MR727526} or \cite[Theorem 4.5.4]{MR1411441}).
Thus,
there exists a unique nonnegative $p$-superharmonic function
$u_{k} \in \dot{W}_{0}^{1, p}(\R^{n})$
satisfying $- \laplacian_{p} u_{k} = \mu_{k}$ in $\R^{n}$
and $\liminf_{|x| \to \infty} u_{k}(x) = 0$.
Moreover, by Theorem \ref{pointwise_est_p-superharmonic}
and \eqref{wolff-max},
\[
\| u \|_{L^{\infty}(\R^{n})}
\leq
c_{K} \| \W_{1, p} \mu_{k} \|_{L^{\infty}(\R^{n})}
\leq
C k.
\]

\begin{Rem}\label{rem:approximation_of_wolff_potential}
For $\mu \in \M^{+}(\R^{n})$,
the $p$-capacity of $\{ x \in \R^{n} \colon \W_{1, p}\mu(x) = \infty \}$ is zero
by Theorem \ref{pointwise_est_p-superharmonic}
(see \cite[Remark 3.7]{MR1402674}).
Thus, if $\mu$ is absolutely continuous with respect to
the $p$-capacity,
then
$\mathbf{1}_{\Omega(\mu, k)} \uparrow \mathbf{1}_{\R^{n}}$ $d \mu$-a.e.,
and
$\W_{1, p} \mu_{k}(x) \uparrow \W_{1, p} \mu(x)$
for all $x \in \R^{n}$.
\end{Rem}

\begin{proof}[Proof of Theorem \ref{thm:mutual_energy_estimate}]
We may assume that both integrals on the right-hand side are finite without loss of generality.
Thus,
$\mu$ and $\sigma$ are absolutely continuous with respect to the $p$-capacity.
For each $k \in \N$, put $\mu_{k} = \mathbf{1}_{\Omega(\mu, k)} \mu$ and
$\sigma_{k} = \mathbf{1}_{ \Omega(\sigma, k) } \sigma$, 
where $\Omega(\mu, k)$ and $\Omega(\sigma, k)$
are defined by \eqref{cutoff_set}.
Let $u_{k}, v_{k} \in \dot{W}_{0}^{1, p}(\R^{n})$
be the bounded finite energy $p$-superharmonic functions satisfying
$-\laplacian_{p} u_{k}  = \mu_{k}$ and
$-\laplacian_{p} v_{k}  = \sigma_{k}$ in $\R^{n}$,
respectively.
By Theorem \ref{pointwise_est_p-superharmonic},
$u_{k} \approx \W_{1, p} \mu_{k}$ and 
$v_{k} \approx \W_{1, p} \sigma_{k}$ in $\R^{n}$.
By Lemma \ref{lem:cp_rule},
$(u_{k}^{\gamma} - \epsilon)_{+} \in \dot{W}_{0}^{1, p}(\R^{n})$ for any $\epsilon > 0$.
Since $\mu_{k}$ is the Riesz measure of $u_{k}$,
\[
\begin{split}
\int_{\R^{n}} (u_{k}^{\gamma} - \epsilon)_{+} \, d \mu_{k}
& =
\int_{\R^{n}} 
|\nabla u_{k}|^{p - 2} \nabla u_{k} \cdot \nabla (u_{k}^{\gamma} - \epsilon)_{+}
\, dx \\
& =
\gamma \int_{ \{ u_{k}^{\gamma} > \epsilon \} } |\nabla u_{k}|^{p} u_{k}^{\gamma - 1} \, dx.
\end{split}
\]
By the monotone convergence theorem,
\[
\begin{split}
\int_{\R^{n}} u_{k}^{\gamma} \, d \mu_{k}
& =
\lim_{\epsilon \to 0}
\int_{\R^{n}} (u_{k}^{\gamma} - \epsilon)_{+} \, d \mu_{k} \\
& =
\lim_{\epsilon \to 0}
\gamma \int_{ \{ u_{k}^{\gamma} > \epsilon \} } |\nabla u_{k}|^{p} u_{k}^{\gamma - 1} \, dx
=
\gamma \int_{ \R^{n} }  |\nabla u_{k}|^{p} u_{k}^{\gamma - 1} \, dx.
\end{split}
\]
Consequently, we have the following estimates:
\begin{align*}
\int_{\R^{n}} \left( \W_{1, p} \mu_{k} \right)^{\gamma + q} \, d \sigma_{k}
& \approx
\int_{\R^{n}} u_{k}^{\gamma + q} \, d \sigma_{k}, \\
\int_{\R^{n}} v_{k}^{\frac{(\gamma + q)(p - 1)}{p - 1 - q}} \, d \sigma_{k}
& \approx
\int_{\R^{n}} \left( \W_{1, p} \sigma_{k} \right)^{\frac{(\gamma + q)(p - 1)}{p - 1  - q}} \, d \sigma_{k}, \\
\int_{ \R^{n} }  |\nabla u_{k}|^{p} u_{k}^{\gamma - 1} \, dx
& \approx
\int_{\R^{n}} \left( \W_{1, p} \mu_{k} \right)^{\gamma} \, d \mu_{k}.
\end{align*}
Here, the constants in equivalence depend only on $n$, $p$, $\gamma$ and $q$.
Combining these estimates and Lemma \ref{lem:trace_estimate}, we get
\[
\begin{split}
&
\int_{\R^{n}} (\W_{1, p} \mu_{k})^{\gamma + q} \, d \sigma_{k} \\
&
\leq
C
\left(
\int_{\R^{n}} (\W_{1, p} \mu_{k})^{\gamma} \, d \mu_{k}
\right)^{\frac{\gamma + q}{p - 1 + \gamma}}
\left(
\int_{\R^{n}} (\W_{1, p} \sigma_{k})^{\frac{(\gamma + q)(p - 1)}{p - 1 - q}} \, d \sigma_{k}
\right)^{\frac{p - 1 - q}{p - 1 + \gamma}}.
\end{split}
\]
By Remark \ref{rem:approximation_of_wolff_potential},
$\W_{1, p} \mu_{k}(x) \uparrow \W_{1, p} \mu(x)$ for all $x \in \R^{n}$ and
$\mathbf{1}_{\Omega(\sigma, k)} \uparrow \mathbf{1}_{\R^{n}}$ $d \sigma$-a.e.
Therefore,
\[
(\W_{1, p} \mu_{k})^{\gamma + q} \mathbf{1}_{\Omega(\sigma, k)}
\uparrow
(\W_{1, p} \mu)^{\gamma + q}
\quad \text{$d \sigma$-a.e.}
\]
Using the monotone convergence theorem, we arrive at the desired estimate.
\end{proof}

The following quasi-triangle inequality is a direct consequence of
Theorem \ref{thm:mutual_energy_estimate}.
When $\gamma = 1$, it readily follows from 
the Hedberg-Wolff theorem.

\begin{Cor}\label{cor:quasi-linearity_of_energy_space}
Let $1 < p < n$ and $0 < \gamma < \infty$.
Then for any $\mu, \nu \in \M^{+}(\R^{n})$,
\[
\begin{split}
&
\int_{\R^{n}} (\W_{1, p} (\mu + \nu) )^{\gamma} \, d (\mu + \nu) \\
& \approx
\int_{\R^{n}} (\W_{1, p} \mu )^{\gamma} \, d \mu
+
\int_{\R^{n}} (\W_{1, p} \nu )^{\gamma} \, d \nu,
\end{split}
\]
where the constants in equivalence depend only on $n$, $p$ and $\gamma$.
\end{Cor}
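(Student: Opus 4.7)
The plan is to prove each direction of the $\approx$ separately. The lower bound is immediate from the monotonicity of the Wolff potential in its mass: since $(\mu+\nu)(B(x,r)) \geq \mu(B(x,r))$ for every ball, one has $\W_{1,p}(\mu+\nu) \geq \W_{1,p}\mu$ pointwise, and similarly for $\nu$. Splitting $d(\mu+\nu) = d\mu + d\nu$ on the left-hand side and applying these two inequalities term by term yields the $\gtrsim$ direction with constant one.

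For the reverse inequality, begin with the quasi-linearity bound \eqref{quasi-linearity}, which gives $\W_{1,p}(\mu+\nu)(x) \leq c(p)\bigl(\W_{1,p}\mu(x) + \W_{1,p}\nu(x)\bigr)$. Raising to the power $\gamma$ and splitting $d(\mu+\nu) = d\mu + d\nu$, the left-hand side is controlled by a constant multiple of
\[
\int_{\R^n} (\W_{1,p}\mu)^\gamma \, d\mu + \int_{\R^n} (\W_{1,p}\nu)^\gamma \, d\nu + \int_{\R^n} (\W_{1,p}\mu)^\gamma \, d\nu + \int_{\R^n} (\W_{1,p}\nu)^\gamma \, d\mu.
\]
The first two terms already appear on the right-hand side of the desired equivalence, so the task reduces to dominating the two cross terms by the diagonal ones.

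To this end, apply Theorem \ref{thm:mutual_energy_estimate} with $q = 0$, a choice that lies in the admissible range since $-\gamma < 0 < p-1$ by hypothesis. The theorem yields
\[
\int_{\R^n} (\W_{1,p}\mu)^\gamma \, d\nu \leq C \left(\int_{\R^n} (\W_{1,p}\mu)^\gamma \, d\mu\right)^{\frac{\gamma}{p-1+\gamma}} \left(\int_{\R^n} (\W_{1,p}\nu)^\gamma \, d\nu\right)^{\frac{p-1}{p-1+\gamma}},
\]
and since the two exponents $\frac{\gamma}{p-1+\gamma}$ and $\frac{p-1}{p-1+\gamma}$ sum to one, the weighted AM-GM (Young's) inequality bounds this product by $\int (\W_{1,p}\mu)^\gamma d\mu + \int (\W_{1,p}\nu)^\gamma d\nu$ up to a constant depending only on $n$, $p$, $\gamma$. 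Interchanging the roles of $\mu$ and $\nu$ handles the symmetric cross term. Since Theorem \ref{thm:mutual_energy_estimate} supplies all the analytic content, no genuine obstacle remains; the corollary is essentially a bookkeeping consequence, and the only step requiring care is verifying that $q = 0$ lies in the admissible range, which it does whenever $\gamma > 0$.
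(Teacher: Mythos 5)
Your proof is correct and takes exactly the route the paper does: the easy direction by pointwise monotonicity of $\W_{1,p}$, and the reverse direction by applying \eqref{quasi-linearity}, expanding into four integrals, invoking Theorem~\ref{thm:mutual_energy_estimate} with $q=0$ on the two cross terms, and closing with Young's inequality. The only thing you add is the (correct) sanity check that $q=0$ lies in the admissible range $-\gamma < q < p-1$, which the paper leaves implicit.
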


\begin{proof}
Each of the right-hand side is controlled by the left-hand side.
Let us estimate the left-hand side.
By \eqref{quasi-linearity},
\[
\begin{split}
&
\int_{\R^{n}} (\W_{1, p} (\mu + \nu) )^{\gamma} \, d (\mu + \nu) \\
& \leq
C \int_{\R^{n}} (\W_{1, p} \mu)^{\gamma} \, d \mu
+
C \int_{\R^{n}} (\W_{1, p} \nu)^{\gamma} \, d \nu \\
& +
C \int_{\R^{n}} (\W_{1, p} \mu)^{\gamma} \, d \nu
+
C \int_{\R^{n}} (\W_{1, p} \nu)^{\gamma} \, d \mu.
\end{split}
\]
Applying Theorem \ref{thm:mutual_energy_estimate} with $q = 0$,
we can estimate the latter two terms by other two.
Then the assertion follows from Young's inequality.
\end{proof}

\begin{Cor}\label{cor:lorentz_integrability_of_wolff_potentials}
Let $1 < p < n$ and $0 < \gamma < \infty$.
Then
\begin{equation}
\label{eqn:sobolev@cor:lorentz_integrability_of_wolff_potentials}
\| \W_{1, p} \mu \|_{L^{r, \rho}(\R^{n})}
\leq
C
\left(
\int_{\R^{n}} (\W_{1, p} \mu)^{\gamma} \, d \mu
\right)^{\frac{1}{p - 1 + \gamma}}
\end{equation}
for any $\mu \in \M^{+}(\R^{n})$,
where 
$r = n (p - 1 + \gamma) / (n - p)$, $\rho = p - 1 + \gamma$
and $C$ is a positive constant depending only on $n$, $p$ and $\gamma$.
\end{Cor}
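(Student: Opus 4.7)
The plan is to reduce the Lorentz-norm bound for $\W_{1,p}\mu$ to an ordinary Sobolev inequality for the $p$-superharmonic approximants $u_k$ constructed in the proof of Theorem \ref{thm:mutual_energy_estimate}, combined with a Lorentz-refined Sobolev embedding and a change-of-variable for decreasing rearrangements. We may assume the right-hand side of \eqref{eqn:sobolev@cor:lorentz_integrability_of_wolff_potentials} is finite (otherwise there is nothing to prove); then, by the argument given before Theorem \ref{pointwise_est_p-superharmonic} (applied to $\mu$ in place of $\sigma$ with exponent $\gamma$ in place of the specific one), $\mu$ is absolutely continuous with respect to the $p$-capacity.

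Following the proof of Theorem \ref{thm:mutual_energy_estimate}, set $\mu_k = \mathbf{1}_{\Omega(\mu,k)}\mu$ and let $u_k \in \dot{W}_{0}^{1,p}(\R^{n})$ be the unique bounded $p$-superharmonic solution to $-\laplacian_p u_k = \mu_k$ in $\R^n$. Then $u_k \approx \W_{1,p}\mu_k$ by Theorem \ref{pointwise_est_p-superharmonic}, and the same integration by parts as in that proof yields
\[
\int_{\R^{n}} u_k^{\gamma}\, d\mu_k
= \gamma \int_{\R^{n}} |\nabla u_k|^{p}\, u_k^{\gamma - 1}\, dx
\approx \int_{\R^{n}} (\W_{1,p}\mu_k)^{\gamma}\, d\mu_k.
\]
Set $v_k := u_k^{(p - 1 + \gamma)/p}$. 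By Lemma \ref{lem:cp_rule}, $v_k \in \dot{W}_{0}^{1,p}(\R^{n})$ and
\[
\|\nabla v_k\|_{L^{p}(\R^{n})}^{p}
= \Bigl(\tfrac{p - 1 + \gamma}{p}\Bigr)^{p}\!\int_{\R^{n}} |\nabla u_k|^{p} u_k^{\gamma - 1}\, dx
\approx \int_{\R^{n}} (\W_{1,p}\mu_k)^{\gamma}\, d\mu_k.
\]

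Next, I invoke the Lorentz refinement of the Sobolev embedding,
\[
\|g\|_{L^{p^{*},p}(\R^{n})} \leq C\, \|\nabla g\|_{L^{p}(\R^{n})}
\quad \forall g \in \dot{W}_{0}^{1,p}(\R^{n}),\qquad p^{*} = \tfrac{np}{n-p},
\]
(the classical refinement due to O'Neil/Peetre; see e.g.\ Grafakos, \emph{Classical Fourier Analysis}). Applied to $g = v_k$, this gives $\|v_k\|_{L^{p^{*},p}} \leq C\, \|\nabla v_k\|_{L^{p}}$. Since $(f^{\alpha})^{*} = (f^{*})^{\alpha}$ for $\alpha > 0$, a direct change of variable in the defining integral of the Lorentz norm yields the identity $\|f^{\alpha}\|_{L^{a,b}} = \|f\|_{L^{\alpha a,\alpha b}}^{\alpha}$; applying this with $\alpha = (p - 1 + \gamma)/p$, $a = p^{*}$, $b = p$ produces $\alpha a = r$ and $\alpha b = \rho$, so
\[
\|u_k\|_{L^{r,\rho}(\R^{n})}^{(p-1+\gamma)/p}
= \|v_k\|_{L^{p^{*},p}(\R^{n})}
\leq C\, \|\nabla v_k\|_{L^{p}(\R^{n})}.
\]
Raising to the $p$-th power and combining with the previous display, then using $u_k \approx \W_{1,p}\mu_k$ once more on the left,
\[
\|\W_{1,p}\mu_k\|_{L^{r,\rho}(\R^{n})}^{p - 1 + \gamma}
\leq C \int_{\R^{n}} (\W_{1,p}\mu_k)^{\gamma}\, d\mu_k
\leq C \int_{\R^{n}} (\W_{1,p}\mu)^{\gamma}\, d\mu.
\]

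To conclude, I pass to the limit $k \to \infty$. By Remark \ref{rem:approximation_of_wolff_potential}, $\W_{1,p}\mu_k \uparrow \W_{1,p}\mu$ pointwise on $\R^{n}$. Since pointwise monotone convergence transfers to decreasing rearrangements, $(\W_{1,p}\mu_k)^{*} \uparrow (\W_{1,p}\mu)^{*}$ on $(0,\infty)$, and the monotone convergence theorem applied to the integral defining $\|\cdot\|_{L^{r,\rho}}$ gives the desired bound. The main technical points where care is needed are (a) justifying the Lorentz-refined Sobolev inequality, which is not recorded in the preliminaries but is standard, and (b) the monotone-convergence passage for the Lorentz norm; the rest is bookkeeping around the identity $\|f^\alpha\|_{L^{a,b}} = \|f\|_{L^{\alpha a,\alpha b}}^{\alpha}$ that matches the exponents $r$ and $\rho$ to $p^*$ and $p$.
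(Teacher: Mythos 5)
Your proof is correct and takes a genuinely different route from the paper's. The paper's own argument for Corollary~\ref{cor:lorentz_integrability_of_wolff_potentials} is a \emph{duality} argument: it first controls $\W_{1,p}$ by the Havin--Maz'ya potential, invokes boundedness of Riesz potentials between Lorentz spaces to handle the dual object $\W_{1,p}(f\,dx)$, feeds this into Theorem~\ref{thm:mutual_energy_estimate} with $q=0$, and then recovers the Lorentz norm of $\W_{1,p}\mu$ from a dual characterization of $L^{r/\gamma,\rho/\gamma}$. You instead bypass both the Havin--Maz'ya comparison and Theorem~\ref{thm:mutual_energy_estimate} entirely: you approximate $\mu$ by $\mu_k$, use $u_k\approx\W_{1,p}\mu_k$ from Theorem~\ref{pointwise_est_p-superharmonic} together with the energy identity $\int u_k^\gamma\,d\mu_k = \gamma\int|\nabla u_k|^p u_k^{\gamma-1}\,dx$, and then observe that for $v_k=u_k^{(p-1+\gamma)/p}$ the Lorentz-refined Sobolev embedding $\|g\|_{L^{p^*,p}}\leq C\|\nabla g\|_{L^p}$ combined with the scaling identity $\|f^\alpha\|_{L^{a,b}}=\|f\|_{L^{\alpha a,\alpha b}}^\alpha$ lands exactly on the claimed indices $(r,\rho)$. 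This is arguably more conceptual: it exhibits the Lorentz target as a scaled copy of the classical Lorentz--Sobolev embedding. The trade-off is that you must import the Lorentz refinement of the Sobolev inequality (not recorded in the paper's preliminaries, though standard), whereas the paper's route stays entirely within the toolkit it has already set up (Theorem~\ref{thm:mutual_energy_estimate} plus the Riesz-potential bound already cited to~\cite{MR2445437}).

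One small point to tighten: when you appeal to Lemma~\ref{lem:cp_rule}(i) to conclude $v_k=u_k^{(p-1+\gamma)/p}\in\dot W_0^{1,p}(\R^n)$ with the stated gradient, the hypothesis "$f'$ bounded on the range of $u$" fails for $\gamma<1$ because $t\mapsto t^{(p-1+\gamma)/p}$ has unbounded derivative near $0$. The fix is the one used in the proof of Lemma~\ref{lem:trace_estimate}: work with $w_\epsilon=(u_k^{(p-1+\gamma)/p}-\epsilon)_+$, apply the Lorentz--Sobolev inequality to $w_\epsilon$, and let $\epsilon\downarrow0$ by monotone convergence. This does not affect the result, only the citation. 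Everything else — the exponent bookkeeping $\alpha a=r$, $\alpha b=\rho$; the monotonicity $\W_{1,p}\mu_k\uparrow\W_{1,p}\mu$ from Remark~\ref{rem:approximation_of_wolff_potential}; and the passage of monotone pointwise convergence to decreasing rearrangements and hence to Lorentz norms — is correct.
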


\begin{proof}
It is known that for any $\sigma \in \M^{+}(\R^{n})$,
\begin{equation*}\label{inq:HM}
\W_{1, p}\sigma(x)
\leq
c(n, p) \mathbf{V}_{1, p}\sigma(x)
\end{equation*}
for all $x \in \R^{n}$,
where
$\mathbf{V}_{1, p}\sigma := \I_{1}[(\I_{1}\sigma)^{\frac{1}{p - 1}} dx]$
is the Havin-Maz'ya potential of $\sigma$
(see \cite{MR727526}).
Therefore, by boundedness of Riesz potentials
(see, e.g., \cite[Theorem 1.4.19]{MR2445437}),
\[
\| \W_{1, p}(f dx) \|_{L^{r, \rho}(\R^{n})}
\leq
C \| f \|_{L^{\frac{r}{r - \gamma}, \frac{\rho}{\rho - \gamma}}(\R^{n})}^{\frac{1}{p - 1}}
\]
for any nonnegative $f \in L^{\frac{r}{r - \gamma}, \frac{\rho}{\rho - \gamma}}(\R^{n})$.
Hence, by H\"{o}lder's inequality,
\begin{equation}
\label{eqn:energy_of_fdx@cor:lorentz_integrability_of_wolff_potentials}
\begin{split}
\int_{\R^{n}} (\W_{1, p}(f dx))^{\gamma} f \, dx
& \leq
\| (\W_{1, p}(f dx))^{\gamma} \|_{L^{\frac{r}{\gamma}, \frac{\rho}{\gamma}}(\R^{n})}
\| f \|_{L^{\frac{r}{r - \gamma}, \frac{\rho}{\rho - \gamma}}(\R^{n})} \\
& =
\| \W_{1, p}(f dx) \|_{L^{r, \rho}(\R^{n})}^{\gamma}
\| f \|_{L^{\frac{r}{r - \gamma}, \frac{\rho}{\rho - \gamma}}(\R^{n})} \\
& \leq
C \| f \|_{L^{\frac{r}{r - \gamma}, \frac{\rho}{\rho - \gamma}}(\R^{n})}^{1 + \frac{\gamma}{p - 1}}.
\end{split}
\end{equation}
Combining \eqref{eqn:energy_of_fdx@cor:lorentz_integrability_of_wolff_potentials}
and
Theorem \ref{thm:mutual_energy_estimate},
we get
\begin{equation*}\label{mutual_energy_estimate_of_mu_and_fdx}
\int_{\R^{n}} (\W_{1, p}\mu)^{\gamma} f \, dx
\leq
C 
\left(
\int_{\R^{n}} (\W_{1, p} \mu)^{\gamma} \, d \mu
\right)^{\frac{\gamma}{p - 1 + \gamma}}
\| f \|_{L^{\frac{r}{r - \gamma}, \frac{\rho}{\rho - \gamma}}(\R^{n})}.
\end{equation*}
Thus, by a dual characterization of $L^{\frac{r}{\gamma}, \frac{\rho}{\gamma}}(\R^{n})$,
\begin{equation*}
\begin{split}
&
\| \W_{1, p} \mu \|_{L^{r, \rho}(\R^{n})}^{\gamma}
=
\| (\W_{1, p} \mu)^{\gamma} \|_{L^{\frac{r}{\gamma}, \frac{\rho}{\gamma}}(\R^{n})}
\\
& \leq
C \sup \left\{
\int_{\R^{n}} (\W_{1, p}\mu)^{\gamma} f \, dx
\colon
\| f \|_{L^{\frac{r}{r - \gamma}, \frac{\rho}{\rho - \gamma}}(\R^{n})} \leq 1,
f \geq 0
\right\}
\\
& \le
C
\left(
\int_{\R^{n}} (\W_{1, p} \mu)^{\gamma} \, d \mu
\right)^{\frac{\gamma}{p - 1 + \gamma}}.
\end{split}
\end{equation*}
This completes the proof.
\end{proof}

\begin{Rem}
The inequality \eqref{eqn:energy_of_fdx@cor:lorentz_integrability_of_wolff_potentials}
gives a refinement of \cite[Corollary 1.2]{MR3985926}.
Suppose that $1 < p < n$, $0 < q < p - 1$ and $0 < \gamma < \infty$.
Let $f$ and $g$ be nonnegative functions on $\R^{n}$.
\begin{enumerate}
\item
If $d \sigma = f dx$ and if
\[
f \in L^{s, t}(\R^{n})
\ \text{with} \
s = \frac{n(p - 1 + \gamma)}{n (p - 1 - q) + p(\gamma + q)}, \
t = \frac{p - 1 + \gamma}{p - 1 - q},
\]
then \eqref{cond:dsigma-wolff} is fulfilled. 
\item
If $d \mu = g dx$ and if
\[
g \in L^{s, t}(\R^{n})
\ \text{with} \
s = \frac{n(p - 1 + \gamma)}{n(p - 1) + p \gamma}, \
t = \frac{p - 1 + \gamma}{p - 1},
\]
then \eqref{cond:dmu-wolff} is fulfilled.
\end{enumerate}
Hence, in view of Theorem \ref{thm:main-p-laplacian},
there exists a minimal $p$-superharmonic solution $u \in L^{r, \rho}(\R^{n})$ to \eqref{eq:p-laplacian},
where $r = n (p - 1 + \gamma) / (n - p)$ and $\rho = p - 1 + \gamma$.
\end{Rem}

\section{Construction of minimal solutions to \eqref{eq:p-laplacian}}\label{sec:existence}

Throughout, we assume that $1 < p < n$ and $0 < q < p - 1$.
Our definition of generalized solutions is as follows:

\begin{Def}\label{def:shs}
Let $\Omega$ be a domain in $\R^{n}$, and let $\sigma, \mu \in \M^{+}(\Omega)$.
A nonnegative function $u$ is said to be a
\textit{$p$-superharmonic solution} (\textit{supersolution}) to the equation
\begin{equation*}\label{def:p-shs}
- \laplacian_{p} u = \sigma u^{q} + \mu \quad \text{in} \ \Omega
\end{equation*}
if $u$ is $p$-superharmonic in $\Omega$,
$u \in L^{q}_{\loc}(\Omega, d \sigma)$,
and $\nu[u] = u^{q} d \sigma + \mu$
($\nu[u] \geq u^{q} d \sigma + \mu$).
We say that a nontrivial $p$-superharmonic solution $u$ to \eqref{eq:p-laplacian} is \textit{minimal}
if  $w \geq u$ in $\R^{n}$
whenever $w$ is a nontrivial $p$-superharmonic supersolution to \eqref{eq:p-laplacian}.
\end{Def}

The following theorem was established by
Cao and Verbitsky \cite{MR3311903, MR3567503}.
It gives pointwise lower estimates of supersolutions to \eqref{eq:p-laplacian_homo}.
\begin{Thm}[{\cite[Theorem 2.3]{MR3311903}}]
\label{thm:lower_bound_of_sol_to_subeq}
Let $w \in L^{q}_{\loc}(\R^{n}, d \sigma)$ be
a positive $p$-superharmonic supersolution to \eqref{eq:p-laplacian_homo}.
Then
\[
w \geq c_{0} (\W_{1, p} \sigma)^{ \frac{p - 1}{p - 1 - q} },
\] 
where $c_{0}$ is a constant depending only on $n$, $p$ and $q$.
\end{Thm}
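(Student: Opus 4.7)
The plan is to bootstrap the Kilpel\"ainen--Mal\'y pointwise estimate (Theorem~\ref{pointwise_est_p-superharmonic}) into the stated lower bound via a self-improving inequality that encodes the supersolution structure. Letting $R\to\infty$ in Theorem~\ref{pointwise_est_p-superharmonic} gives $w(x)\geq c_{K}^{-1}\mathbf{W}_{1,p}\nu[w](x)$ for every $x$, and the supersolution hypothesis $\nu[w]\geq w^{q}\,d\sigma$, together with monotonicity of $\mathbf{W}_{1,p}$ in its measure argument, yields the central inequality
\[
w(x)\ \geq\ c_{K}^{-1}\,\mathbf{W}_{1,p}\bigl(w^{q}\,d\sigma\bigr)(x).
\]

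The second step is to iterate this inequality. Writing $m(x,r):=\inf_{B(x,r)}w$, the trivial bound $w\geq m(x,r)$ on $B(x,r)$ turns the displayed estimate into
\[
w(x)\ \geq\ c\int_{0}^{\infty}m(x,r)^{q/(p-1)}\biggl(\frac{\sigma(B(x,r))}{r^{n-p}}\biggr)^{1/(p-1)}\frac{dr}{r}.
\]
The exponent $\alpha:=(p-1)/(p-1-q)$ is the unique solution of $\alpha\cdot q/(p-1)+1=\alpha$, and is therefore precisely the exponent for which the ansatz $w\geq c_{0}(\mathbf{W}_{1,p}\sigma)^{\alpha}$ is a fixed point of the map $u\mapsto c_{K}^{-1}\mathbf{W}_{1,p}(u^{q}\sigma)$. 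A dyadic decomposition $\mathbf{W}_{1,p}\sigma(x)\approx\sum_{k\in\Z}\bigl(\sigma(B(x,2^{k}))/2^{k(n-p)}\bigr)^{1/(p-1)}$, combined with a downward induction on scales $2^{k}$ that propagates the lower bound on $m(x,2^{k})$ through the displayed integral, closes the loop and delivers the target estimate with a constant depending only on $n$, $p$, $q$.

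The main obstacle is making the iteration produce a \emph{uniform} constant $c_{0}$: a direct substitution of the ansatz into the self-improving inequality is circular, and one must instead track constants carefully through the dyadic induction, exploiting the geometric decay of the Wolff tails to prevent blow-up of the running constant. A secondary technical point is that $\{\mathbf{W}_{1,p}\sigma=\infty\}$ has $p$-capacity zero (as noted in Remark~\ref{rem:approximation_of_wolff_potential}), on which set the conclusion is vacuous; on its complement the positivity of $w$ together with the weak maximum principle \eqref{wolff-max} allows the induction to be initialized at a sufficiently large scale uniformly in $x$.
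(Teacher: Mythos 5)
The paper does not prove this statement; it is imported verbatim from Cao--Verbitsky \cite[Theorem 2.3]{MR3311903}. So there is no in-paper proof to compare against, and your sketch has to be judged on its own terms. Your opening moves are exactly right and match the cited proof: send $R\to\infty$ in Theorem~\ref{pointwise_est_p-superharmonic} to get $w\ge c_{K}^{-1}\W_{1,p}\nu[w]$, use $\nu[w]\ge w^{q}d\sigma$ and monotonicity of $\W_{1,p}$ to get $w\ge c_{K}^{-1}\W_{1,p}(w^{q}d\sigma)$, and note that $\alpha=(p-1)/(p-1-q)$ is the fixed exponent of $\beta\mapsto\beta q/(p-1)+1$. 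The gap is in how you break the circularity. You say ``track constants carefully through the dyadic induction,'' but the induction is unanchored without the single observation that actually produces $\alpha$: restricting the Wolff integral to the single scale $r\approx 2^{k}$ gives $m(x,2^{k})\ge c\,m(x,2^{k})^{q/(p-1)}\bigl(\sigma(B(x,2^{k}))/2^{k(n-p)}\bigr)^{1/(p-1)}$, and dividing by $m(x,2^{k})^{q/(p-1)}$ yields $m(x,2^{k})\ge c^{\alpha}\bigl(\sigma(B(x,2^{k}))/2^{k(n-p)}\bigr)^{\alpha/(p-1)}$ outright. This absorption step is where the exponent $\alpha$ comes from for free and is what provides the base case of your downward induction; it is not ``constant bookkeeping'' and you never write it down. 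The inductive step also needs a quantitative tail-comparison (for $y\in B(x,r)$, the tail of $\W_{1,p}$ at $y$ over scales $\ge 2r$ dominates a fixed multiple of the tail at $x$ over scales $\ge r$), which is again left implicit. Finally, the appeal to the weak maximum principle \eqref{wolff-max} ``to initialize the induction at a sufficiently large scale uniformly in $x$'' does not do what you want: that principle requires $\W_{1,p}\sigma$ to be bounded on $\spt\sigma$, which is not assumed here, and in any case it plays no role in the initialization.

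For reference, the route actually taken in \cite{MR3311903} packages all of the above into a single lemma that you would do well to isolate: an integration-by-parts identity for the truncated Wolff tails gives, for every $s\ge 0$,
\[
\W_{1,p}\bigl((\W_{1,p}\sigma)^{(p-1)s}\,d\sigma\bigr)(x)\ \ge\ \frac{c^{s}}{s+1}\,(\W_{1,p}\sigma(x))^{s+1},\qquad c=c(n,p),
\]
proved by bounding $\W_{1,p}\sigma(y)$ from below on $B(x,r)$ by the tail at $x$ over scales $\ge r$ and then integrating $\int W^{s}(-dW)=W^{s+1}/(s+1)$. With this lemma, one iterates: if $w\ge a(\W_{1,p}\sigma_{R})^{\beta}$ then $w\ge a'(\W_{1,p}\sigma_{R})^{\beta'}$ with $\beta'=\beta q/(p-1)+1$. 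The crucial point you identified as the obstacle is handled automatically: since $\theta=q/(p-1)<1$, the recursion $\ln a_{j+1}=\theta\ln a_{j}+\kappa_{j}$ is a contraction, so the limit $\lim_{j}\ln a_{j}$ is independent of the starting constant $a_{1}$ (which comes from positivity and lower semicontinuity of $w$ on the compact set $\overline{B(0,R)}$, after truncating $\sigma$ to $\sigma_{R}=\mathbf{1}_{B(0,R)}\sigma$). One then lets $R\to\infty$ by monotone convergence. Your dyadic version can be made to work as well, but I would either supply the single-scale absorption plus tail-comparison explicitly, or switch to the exponent-iteration form where the contraction washing out the initial constant is transparent.
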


To construct minimal solutions to \eqref{eq:p-laplacian},
we consider a family of solutions to localized problems
and solve the localized problems using a sub- and supersolution method.
We shall need the following weighted norm inequality.

\begin{Lem}[{\cite[Lemma 2.1]{MR3985926}}]
\label{lem:weighted_norm_inequality}
Assume that \eqref{cond:dsigma-wolff} holds with $0 < \gamma < \infty$.
Then for any $f \in L^{\frac{\gamma + q}{q}}(\R^{n}, d \sigma)$,
\begin{equation*}\label{WN_ineq1}
\| \W_{1, p}(f d \sigma) \|_{L^{\gamma + q}(\R^{n}, d \sigma)}
\leq
C \| f \|_{L^{\frac{\gamma + q}{q}}(\R^{n}, d \sigma)}^{\frac{1}{p - 1}},
\end{equation*}
where
$C$ is a constant depending only on
$n$, $p$, $q$, $\gamma$ and the upper bound of \eqref{cond:dsigma-wolff}.
\end{Lem}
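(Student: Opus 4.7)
Plan:

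I would prove this weighted norm inequality by verifying a Sawyer--type testing condition in the spirit of the Cascante--Ortega--Verbitsky theory of trace inequalities for nonlinear potentials. The key structural observation is that the exponent $(\gamma+q)(p-1)/(p-1-q)$ appearing in \eqref{cond:dsigma-wolff} is exactly the one produced by H\"older's inequality when one bounds the localized Wolff energy $\int_B [\W_{1,p}\sigma]^{\gamma+q}\,d\sigma$ from above by a pure power of $\sigma(B)$. This strongly suggests that \eqref{cond:dsigma-wolff} encodes precisely the correct testing condition.

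The approach has three steps. First, reduce the weighted norm inequality to the testing condition that, for every ball $B \subset \R^n$,
\[
\int_B [\W_{1,p}(\mathbf{1}_B \sigma)]^{\gamma+q} \, d\sigma \leq C\, \sigma(B)^{q/(p-1)}.
\]
This reduction follows from the Cascante--Ortega--Verbitsky characterization of weighted $(L^t, L^s)$-inequalities for Wolff potentials \cite{MR1747901} (or, alternatively, from a direct duality plus dyadic decomposition argument). Second, apply monotonicity of the Wolff potential in its measure argument to replace $\W_{1,p}(\mathbf{1}_B\sigma)$ by $\W_{1,p}\sigma$. Third, apply H\"older's inequality on $B$ with conjugate exponents $(p-1)/(p-1-q)$ and $(p-1)/q$:
\[
\int_B [\W_{1,p}\sigma]^{\gamma+q}\, d\sigma \leq \left(\int_B [\W_{1,p}\sigma]^{\frac{(\gamma+q)(p-1)}{p-1-q}}\,d\sigma\right)^{\frac{p-1-q}{p-1}} \sigma(B)^{q/(p-1)}.
\]
The first factor is bounded uniformly (even globally) by \eqref{cond:dsigma-wolff}, which completes the verification of the testing condition.

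The main obstacle is the first step: one needs a Cascante--Ortega--Verbitsky-type theorem accommodating the specific exponent pair $\bigl((\gamma+q)/q,\,\gamma+q\bigr)$. Because $q$ may be less than $1$ we can be in a "super-diagonal" regime where some standard Sawyer/dyadic testing theorems require extra care, and passing from the testing condition on indicator functions $\mathbf{1}_B$ to general $f$ involves a delicate dyadic/atomic decomposition that respects the sublinearity of $\W_{1,p}$. One workaround, which avoids testing theorems altogether, is to use the Havin--Maz'ya pointwise bound $\W_{1,p}\sigma \leq c\,\mathbf{V}_{1,p}\sigma = c\,\I_1[(\I_1\sigma)^{1/(p-1)}]$ to partially linearize the operator, and then invoke two-weight estimates for the Riesz potential $\I_1$ between Lebesgue--Lorentz spaces, mirroring the strategy in Corollary \ref{cor:lorentz_integrability_of_wolff_potentials}. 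Either route ultimately converts the problem into extracting $\sigma(B)^{q/(p-1)}$-type decay from the single quantitative hypothesis \eqref{cond:dsigma-wolff}.
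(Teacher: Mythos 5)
The paper cites this lemma from \cite[Lemma~2.1]{MR3985926} and does not reprove it, so there is no internal proof to compare against.  Within the present paper's framework, however, there is a short route that bypasses testing theory entirely: apply Theorem~\ref{thm:mutual_energy_estimate} with $\mu = f\,d\sigma$, then use H\"older's inequality with exponents $\tfrac{\gamma+q}{\gamma}$ and $\tfrac{\gamma+q}{q}$ to obtain
\[
\int_{\R^{n}} (\W_{1,p}(f\,d\sigma))^{\gamma}f\,d\sigma
\leq \left(\int_{\R^{n}} (\W_{1,p}(f\,d\sigma))^{\gamma+q}\,d\sigma\right)^{\frac{\gamma}{\gamma+q}}\|f\|_{L^{\frac{\gamma+q}{q}}(\R^{n},d\sigma)},
\]
and absorb the resulting power of $\int (\W_{1,p}(f\,d\sigma))^{\gamma+q}\,d\sigma$ into the left-hand side.  (To make the absorption rigorous, first work with the truncations \eqref{cutoff_set} so that all integrals are a priori finite, then pass to the limit by monotone convergence.)  This yields exactly the stated inequality with a constant depending on $n$, $p$, $q$, $\gamma$ and the upper bound in \eqref{cond:dsigma-wolff}.

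The gap in your proposal is Step~1, the reduction to a ball testing condition, and you correctly identify it as the obstacle.  The reference \cite{MR1747901} characterizes the trace inequality \eqref{intro:trace_ineq}; it does not supply a Sawyer-type theorem asserting that the indicator testing inequality alone implies boundedness of the nonlinear map $f \mapsto \W_{1,p}(f\,d\sigma)$ from $L^{(\gamma+q)/q}(d\sigma)$ to $L^{\gamma+q}(d\sigma)$.  Because $\W_{1,p}$ is not linear there is no adjoint to test, and depending on whether $q<1$, $q=1$ or $q>1$ the inequality sits in different exponent regimes for which testing on indicators is not obviously sufficient.  Your Steps~2--3 are correct but only verify the testing condition, not its sufficiency.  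The Havin--Maz'ya workaround shifts the problem to a two-weight bound for $\I_{1}$ with $\sigma$ on both sides, which again requires testing-type input and does not close the gap.
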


\begin{Lem}\label{lem:main-p-laplacian}
Let $B = B(x_{0}, R)$ be a ball in $\R^{n}$.  Assume that
\eqref{cond:dsigma-wolff} and \eqref{cond:dmu-wolff} hold with $0 < \gamma < \infty$.
Assume also that
\[
\sup_{\spt \sigma} \W_{1, p}\sigma < \infty, \quad \spt \sigma \subset \cl{B}
\]
and
\[
\sup_{\spt \mu} \W_{1, p}\mu < \infty, \quad \spt \mu \subset \cl{B}.
\]
Then there exists a nonnegative $p$-superharmonic function
$u$ satisfying
\begin{equation}\label{eq:p-laplace_B}
- \laplacian_{p} u = \sigma u^{q} + \mu \quad \text{in} \ 2B
\end{equation}
and
\begin{equation*}
\min\{ u, l \} \in \dot{W}_{0}^{1, p}(2B) \quad \forall l > 0.
\end{equation*}
Moreover, $u$ satisfies the following properties:
\begin{enumerate}[label=(\emph{\roman*})]
\item\label{norm_bounds_for_sols_to_lp}
The solution $u$ belongs to
$L^{\gamma + q}(2B, d \sigma) \cap L^{r, \rho}(2B)$.
Moreover,
\[
\left(
\int_{2B}
(\W_{1, p}(u^{q} d \sigma))^{\gamma}
u^{q} \, d \sigma
\right)^{\frac{1}{p - 1 + \gamma}}
+
\| u \|_{L^{r, \rho}(2B)}
\leq
C,
\]
where $C$ is a constant depending only on
$n$, $p$, $q$, $\gamma$ and
the bounds of \eqref{cond:dsigma-wolff} and \eqref{cond:dmu-wolff}.
\item\label{lower_bound_of_sols_to_lp}
For every $x \in B$,
\[
u(x)
\geq
\frac{1}{C}
\left\{
(\W_{1, p}^{\frac{R}{2}} \sigma(x))^{ \frac{p - 1}{p - 1 - q} } + \W_{1, p}^{\frac{R}{2}} \mu(x)
\right\},
\]
where $C$ is a positive constant depending only on $n$, $p$ and $q$.
\item\label{upper_bound_of_sols_to_lp}
If $w$ is a $p$-superharmonic supersolution to \eqref{eq:p-laplacian},
then $w \geq u$ in 2B.
\item\label{comparison_principle_for_sols_to_lp}
Let $u_{1}$ be a $p$-superharmonic function which defined by \eqref{definition_of_u_j}.
Assume that $w$ is a nonnegative $p$-superharmonic supersolution to \eqref{eq:p-laplace_B}
and that $w \geq u_{1}$ in $2B$.
Then $w \geq u \geq u_{1}$ in $2B$.
\end{enumerate}
\end{Lem}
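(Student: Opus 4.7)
The plan is to construct $u$ by a monotone iteration on $2B$ and verify the four properties one at a time. First I would let $u_1$ be the unique nonnegative $p$-superharmonic function on $2B$ with Riesz measure $\mu$ and $\min\{u_1,l\}\in\dot{W}_0^{1,p}(2B)$ for all $l>0$; this exists since $\sup_{\spt\mu}\W_{1,p}\mu<\infty$ together with the weak maximum principle \eqref{wolff-max} places $\mu\in(\dot{W}_0^{1,p}(2B))^*$ via the Hedberg--Wolff theorem. Next I would define $u_{j+1}$ recursively as the analogous solution with Riesz measure $\sigma u_j^q+\mu$ on $2B$; Theorem \ref{pointwise_est_p-superharmonic} combined with the hypothesis $\sup_{\spt\sigma}\W_{1,p}\sigma<\infty$ yields a uniform $L^\infty$ bound at each step, keeping $\sigma u_j^q+\mu$ in the dual space. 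The weak comparison principle then gives $u_{j+1}\geq u_j$ by induction.

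The core of the argument is a uniform bound on $\|u_j\|_{L^{\gamma+q}(2B,d\sigma)}$. The upper half of Theorem \ref{pointwise_est_p-superharmonic} yields $u_{j+1}\leq C(\W_{1,p}(u_j^q\,d\sigma)+\W_{1,p}\mu)$ pointwise on $2B$. Taking $L^{\gamma+q}(d\sigma)$ norms, Lemma \ref{lem:weighted_norm_inequality} applied to $f=u_j^q$ controls the first term by $C\|u_j\|_{L^{\gamma+q}(d\sigma)}^{q/(p-1)}$, and Theorem \ref{thm:mutual_energy_estimate} (invoking \eqref{cond:dsigma-wolff} and \eqref{cond:dmu-wolff}) bounds $\|\W_{1,p}\mu\|_{L^{\gamma+q}(d\sigma)}$ by a constant depending only on the data. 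Since $q/(p-1)<1$, the resulting recursion
\[
\|u_{j+1}\|_{L^{\gamma+q}(d\sigma)}\leq C_1\|u_j\|_{L^{\gamma+q}(d\sigma)}^{q/(p-1)}+C_2
\]
admits a universal fixed-point bound. Setting $u:=\lim_j u_j$ and invoking classical stability of $p$-superharmonic functions under monotone convergence would then yield $\nu[u]=\sigma u^q+\mu$ on $2B$.

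To finish, I would verify the listed properties. For \ref{norm_bounds_for_sols_to_lp}, applying Corollary \ref{cor:lorentz_integrability_of_wolff_potentials} to $\nu[u]$ together with Corollary \ref{cor:quasi-linearity_of_energy_space} splits the energy into the $\sigma u^q$ and $\mu$ contributions and produces the Lorentz estimate. Property \ref{lower_bound_of_sols_to_lp} decomposes naturally: the $\W_{1,p}^{R/2}\mu$ piece follows from the lower half of Theorem \ref{pointwise_est_p-superharmonic} applied on $B\subset 2B$, while the $(\W_{1,p}^{R/2}\sigma)^{(p-1)/(p-1-q)}$ piece comes from Theorem \ref{thm:lower_bound_of_sol_to_subeq} since $u$ is a supersolution of $-\laplacian_p u\geq\sigma u^q$ (a localization suffices because $\spt\sigma\subset\overline{B}$). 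For \ref{upper_bound_of_sols_to_lp}, if $w$ is a supersolution to \eqref{eq:p-laplacian} on $\R^n$, then $\nu[w]\geq\mu$ on $2B$ gives $w\geq u_1$ in $2B$ by comparison, and the inductive inequality $\sigma w^q+\mu\geq\sigma u_j^q+\mu$ yields $w\geq u_{j+1}$ and hence $w\geq u$ in the limit. The identical induction handles \ref{comparison_principle_for_sols_to_lp} starting from the hypothesized $w\geq u_1$.

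The main obstacle will be the monotone-limit step: ensuring that the Riesz measures $\nu[u_j]$ converge strongly enough to identify $\nu[u]=\sigma u^q+\mu$ rather than only pass to a weak-$*$ limit. The uniform $L^{\gamma+q}(d\sigma)$ bound combined with monotone convergence handles the passage $u_j^q\,d\sigma\to u^q\,d\sigma$, and standard stability results for $p$-superharmonic functions under increasing sequences identify $\nu[u]$ with $\lim_j\nu[u_j]$. A secondary technical point is verifying well-definedness of each iterate -- one must check that $\sigma u_j^q+\mu$ remains in $(\dot{W}_0^{1,p}(2B))^*$ throughout -- which is handled by propagating the $L^\infty$ bound from Theorem \ref{pointwise_est_p-superharmonic} inductively.
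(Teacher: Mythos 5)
Your iteration starts from the wrong place, and this is not a cosmetic issue.  You set $u_1$ to be the solution of $-\Delta_p u_1 = \mu$ in $2B$, i.e., you implicitly take $u_0 = 0$.  If $\mu = 0$ (and $\sigma \neq 0$, a case the lemma must cover since it feeds Theorem~\ref{thm:main-p-laplacian} which only assumes $(\sigma,\mu)\neq(0,0)$), then $u_1 = 0$, hence $\sigma u_1^q + \mu = 0$, hence $u_2 = 0$, and by induction the whole sequence is identically zero.  The iteration converges to the trivial solution, which is not a positive solution and certainly does not satisfy the lower bound~\ref{lower_bound_of_sols_to_lp}.  The paper avoids this by first constructing a genuine positive subsolution
\[
u_0 = c_1 v^{\frac{p-1}{p-1-q}},\qquad -\Delta_p v = \Bigl(\tfrac{p-1-q}{p-1}\Bigr)^{p-1}\sigma \ \text{in}\ 2B,
\]
which satisfies $-\Delta_p u_0 \le \sigma u_0^q$ (and hence $\le \sigma u_0^q + \mu$) by a convexity/Picone-type computation.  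The constant $c_1$ is tuned so that $u_0 \le c_0(\W_{1,p}\sigma)^{(p-1)/(p-1-q)}$, which, combined with Theorem~\ref{thm:lower_bound_of_sol_to_subeq}, guarantees $u_0 \le w$ for every supersolution $w$ of~\eqref{eq:p-laplacian}.  This positive subsolution is what keeps the iteration off the trivial fixed point and simultaneously what makes~\ref{upper_bound_of_sols_to_lp} and~\ref{comparison_principle_for_sols_to_lp} provable by induction.

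Two further problems stem from the same choice.  First, for~\ref{lower_bound_of_sols_to_lp} you propose to apply Theorem~\ref{thm:lower_bound_of_sol_to_subeq} to $u$ as a supersolution of $-\Delta_p u \ge \sigma u^q$ on $2B$; but that theorem is stated for supersolutions on all of $\R^n$ and controls the full Wolff potential, not its truncation.  Your aside that ``a localization suffices'' is exactly the nontrivial point that needs proof.  The paper instead gets the $\sigma$-piece of the lower bound for free: Theorem~\ref{pointwise_est_p-superharmonic} gives $v \ge c_K^{-1}\W_{1,p}^{R/2}\sigma$ on $B$, so $u_0 \ge C^{-1}(\W_{1,p}^{R/2}\sigma)^{(p-1)/(p-1-q)}$ by construction, and $u \ge u_1 \ge u_0$.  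Second, the statement of~\ref{comparison_principle_for_sols_to_lp} refers to the specific $u_1$ defined by~\eqref{definition_of_u_j} with the subsolution $u_0$ as seed; with your $u_1$ (solving $-\Delta_p u_1 = \mu$) that claim is not the one being asked for.  The rest of your plan — the recursion via Lemma~\ref{lem:weighted_norm_inequality} and Theorem~\ref{thm:mutual_energy_estimate}, the Young inequality to close the $L^{\gamma+q}(d\sigma)$ bound, the use of Corollaries~\ref{cor:quasi-linearity_of_energy_space} and~\ref{cor:lorentz_integrability_of_wolff_potentials} for~\ref{norm_bounds_for_sols_to_lp}, and the weak-continuity argument to identify $\nu[u]$ in the limit — does match the paper and is sound once the iteration is seeded correctly.
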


\begin{proof}
\noindent
\textbf{Step 1.}
We construct approximate solutions $\{ u_{j} \}_{j = 1}^{\infty}$.
By assumptions on $\sigma$ and $\mu$,
along with the Hedberg-Wolff theorem (\cite[Theorem 4.5.4]{MR1411441}),
$\sigma$ and $\mu$ belong to $(\dot{W}_{0}^{1, p}(2B))^{*}$.
Therefore, there exists a bounded finite energy $p$-superharmonic function
$v \in \dot{W}_{0}^{1, p}(2B) \cap L^{\infty}(2B)$ satisfying
\[
- \laplacian_{p} v
=
\tilde{\sigma}
:=
\left( \frac{p - 1 - q}{p - 1} \right)^{p - 1} \sigma
\quad \text{in} \ 2B.
\] 
Then for any $\beta \geq 1$,
$v^{\beta} \in \dot{W}_{0}^{1, p}(2B) \cap L^{\infty}(2B)$.
Moreover,
\[
\int_{2B}
|\nabla v^{\beta}|^{p - 2} \nabla v^{\beta} \cdot \nabla \varphi \, dx
\leq
\beta^{p - 1}
\int_{2B}
|\nabla v|^{p - 2} \nabla v \cdot \nabla (v^{(\beta - 1)(p - 1)} \varphi) \, dx
\]
for any nonnegative $\varphi \in C_{c}^{\infty}(2B)$.
In other words,
\begin{equation*}\label{weak_form_of_subsol}
- \laplacian_{p} v^{\beta}
\leq
\beta^{p - 1} v^{(\beta - 1)(p - 1)} \tilde{\sigma}
\quad \text{in} \ 2B
\end{equation*}
in the sense of distribution.
Let
\begin{equation}\label{definition_of_u_0}
u_{0} = c_{1} v^{\frac{p - 1}{p - 1 - q}},
\end{equation}
where
$c_{1} := \min\{ c_{0} c_{K}^{\frac{(1 - p)}{p - 1 - q}}, 1 \}$.
Here, $c_{K}$ and $c_{0}$ are the constants in
Theorems \ref{pointwise_est_p-superharmonic}
and \ref{thm:lower_bound_of_sol_to_subeq},
respectively.
Then $u_{0} \in \dot{W}_{0}^{1, p}(2B) \cap L^{\infty}(2B)$ and
\[
- \laplacian_{p} u_{0}
\leq (c_{1})^{p - 1 - q} \sigma u_{0}^{q}
\leq \sigma u_{0}^{q}
\quad \text{in} \ 2B.
\]
Moreover, by Theorem \ref{pointwise_est_p-superharmonic},
\begin{equation}\label{upper_bound_of_u0}
u_{0} \leq c_{0} (\W_{1, p} \sigma)^{ \frac{p - 1}{p - 1 - q} }
\quad \text{in $2B$.}
\end{equation}
We define a sequence of $p$-superharmonic functions $\{  u_{j} \}_{j=1}^{\infty}$ by
\begin{equation}\label{definition_of_u_j}
- \laplacian_{p} u_{j+1} = \sigma u_{j}^{q} + \mu, \quad \text{in} \ 2B, \ j = 0,1,2, \dots.
\end{equation}
Assume that $u_{j}$ is bounded for some $j \geq 0$.
Then the measure $\sigma u_{j}^{q} + \mu$ belongs to the dual of $\dot{W}_{0}^{1, p}(2B)$,
and $\W_{1, p}( u_{j}^{q} d \sigma + d \mu )$ is bounded.
By the comparison principle for weak solutions, $u_{j + 1} \geq u_{j}$ for all $j \geq 0$.
Therefore, $\{  u_{j} \}_{j=1}^{\infty}$ is defined as
an increasing sequence of bounded finite energy $p$-superharmonic functions.
By Theorem \ref{pointwise_est_p-superharmonic},
for every $x \in B$,
\begin{equation}\label{lower_bound_of_u1}
\begin{split}
u_{1}(x)
& \geq
\max\left\{
u_{0}(x), \frac{1}{c_{K}} \W_{1, p}^{\frac{R}{2}} \mu(x)
\right\} \\
& \geq
\frac{1}{C}
\left\{
(\W_{1, p}^{\frac{R}{2}} \sigma)^{ \frac{p - 1}{p - 1 - q} }(x) + \W_{1, p}^{\frac{R}{2}} \mu(x)
\right\}.
\end{split}
\end{equation}
Assume that $w$ is a nonnegative $p$-superharmonic supersolution to \eqref{eq:p-laplacian}.
Then by Theorem \ref{thm:lower_bound_of_sol_to_subeq} and \eqref{upper_bound_of_u0},
\[
u_{0} \leq c_{0} (\W_{1, p} \sigma)^{ \frac{p - 1}{p - 1 - q} } \leq w
\quad \text{$d \sigma$-a.e. in $2B$.}
\]
Since $\sigma$ is absolutely continuous with respect to the $p$-capacity,
it follows from
the comparison principle for renormalized solutions
(see \cite[Lemma 5.2]{MR3567503}) that
$w \geq u_{1}$ in $2B$.
Thus, by induction,
\begin{equation}\label{upper_bound_of_uj}
w \geq u_{j} \quad \text{in} \ 2B
\end{equation}
for all $j \geq 1$.
The same argument is valid if $w$ satisfies
assumptions in \ref{comparison_principle_for_sols_to_lp}.

\noindent
\textbf{Step 2.}
We give bounds of $\{ u_{j} \}_{j = 1}^{\infty}$.
For simplicity, we denote by $u_{j}$ the zero extension of $u_{j}$ again.
By \eqref{upper_bound_of_u0} and \eqref{cond:dsigma-wolff},
$u_{0} \in L^{\gamma + q}(2B, d \sigma)$.
Assume that $u_{j} \in L^{\gamma + q}(2B, d \sigma)$ for some $j \geq 0$.
Then by Theorem \ref{pointwise_est_p-superharmonic},
the comparison principle and \eqref{quasi-linearity},
\[
\begin{split}
\| u_{j + 1} \|_{L^{\gamma + q}(2B, d \sigma)} 
& \leq
c_{K} \| \W_{1, p} ( u_{j}^{q} d \sigma + d \mu) \|_{L^{\gamma + q}(2B, d \sigma)} \\
& \leq
C \| \W_{1, p} ( u_{j}^{q} d \sigma) \|_{L^{\gamma + q}(2B, d \sigma)}
+
C \| \W_{1, p}\mu \|_{L^{\gamma + q}(2B, d \sigma)} \\
& =
C \| \W_{1, p} ( u_{j}^{q} d \sigma) \|_{L^{\gamma + q}(\R^{n}, d \sigma)}
+
C \| \W_{1, p}\mu \|_{L^{\gamma + q}(\R^{n}, d \sigma)}.
\end{split}
\]
By Lemma \ref{lem:weighted_norm_inequality},
\begin{equation}\label{sub-natural_estimate}
\begin{split}
\| \W_{1, p} ( u_{j}^{q} d \sigma) \|_{L^{\gamma + q}(\R^{n}, d \sigma)}
\leq
C \| u_{j}^{q} \|_{L^{\frac{\gamma + q}{q}}(\R^{n}, d \sigma)}^{\frac{1}{p - 1}}
=
C \| u_{j} \|_{L^{\gamma + q}(2B, d \sigma)}^{\frac{q}{p - 1}}.
\end{split}
\end{equation}
Moreover, by Theorem \ref{thm:mutual_energy_estimate} and \eqref{cond:dmu-wolff},
\[
\| \W_{1, p}\mu \|_{L^{\gamma + q}(\R^{n}, d \sigma)}
\leq
C.
\]
Thus,
\[
\| u_{j + 1} \|_{L^{\gamma + q}(2B, d \sigma)}
\leq
C( \| u_{j} \|_{L^{\gamma + q}(2B, d \sigma)}^{\frac{q}{p - 1}} + 1).
\]
Since $q < p - 1$, by Young's inequality and monotonicity of $u_{j}$,
\begin{equation}\label{integral_bound_of_u}
\| u_{j + 1} \|_{L^{\gamma + q}(2B, d \sigma)} \leq C.
\end{equation}
Then by H\"{o}lder's inequality and
Lemma \ref{lem:weighted_norm_inequality}, 
\begin{equation*}
\begin{split}
\left(
\int_{\R^{n}}
(\W_{1, p}(u_{j}^{q} d \sigma))^{\gamma}
u_{j}^{q} \, d \sigma
\right)^{\frac{1}{p - 1 + \gamma}}
& \leq
C \| u_{j}^{q} \|_{L^{\frac{\gamma + q}{q}}(\R^{n}, d \sigma)}^{\frac{1}{p - 1}}
=
C \| u_{j} \|_{L^{\gamma + q}(2B, d \sigma)}^{\frac{q}{p - 1}}.
\end{split}
\end{equation*}
Hence, by \eqref{integral_bound_of_u}, 
\begin{equation}\label{energy_bound_of_u}
\begin{split}
\int_{\R^{n}}
(\W_{1, p}(u_{j}^{q} d \sigma))^{\gamma}
u_{j}^{q} \, d \sigma
\leq
C.
\end{split}
\end{equation}
Fix $l > 0$.
Testing \eqref{definition_of_u_j} with $\min\{ u_{j + 1}, l \}$, we get
\[
\begin{split}
\int_{2B} |\nabla \min\{ u_{j + 1}, l \}|^{p} \, dx
& \leq
l
\left(
\int_{2B} u_{j}^{q} \, d \sigma
+
\mu(2B)
\right) \\
& \leq
l \left(
\| u_{j} \|_{L^{\gamma + q}(2B, d \sigma)}^{q}
\sigma(2B)^{\frac{\gamma}{\gamma + q}}
+
\mu(2B)
\right).
\end{split}
\]
By using \eqref{integral_bound_of_u} again,
\begin{equation}\label{truncated_energy_bound_of_u}
\int_{2B} |\nabla \min\{ u_{j + 1}, l \}|^{p} \, dx
\leq
C l.
\end{equation}
Here, 
the constant $C$ depends also on $\sigma(2B)$ and $\mu(2B)$, but not on $j \in \N$.

\noindent 
\textbf{Step 3.}
Let
\[
u = \lim_{j \to \infty} u_{j}.
\]
By \eqref{truncated_energy_bound_of_u} and the Poincar\'{e} inequality,
$u$ is not identically infinite.
Hence,
$u$ is $p$-superharmonic in $2B$ 
by \cite[Lemma 7.3]{MR2305115}.
Moreover, by the weakly continuity result in
\cite[Theorem 3.1]{MR1890997},
we have $\nu[u] = \sigma u^{q} + \mu$.
Applying the monotone convergence theorem, we have
\begin{equation}\label{localized_energy}
\int_{\R^{n}}
(\W_{1, p}(u^{q} d \sigma))^{\gamma}
u^{q} \, d \sigma
\leq
C.
\end{equation}
By Corollary \ref{cor:quasi-linearity_of_energy_space},
\begin{equation*}
\int_{\R^{n}}
(\W_{1, p} \nu[u])^{\gamma}
d \nu[u]
\approx
\int_{\R^{n}}
(\W_{1, p}(u^{q} d \sigma))^{\gamma}
u^{q} \, d \sigma
+
\int_{\R^{n}}
(\W_{1, p} \mu)^{\gamma}
\, d \mu.
\end{equation*}
Thus, by Theorem \ref{pointwise_est_p-superharmonic},
Corollary \ref{cor:lorentz_integrability_of_wolff_potentials},
\eqref{localized_energy} and \eqref{cond:dmu-wolff},
\[
\| u \|_{L^{r, \rho}(2B)}
\leq
c_{K} \| \W_{1, p} \nu[u] \|_{L^{r, \rho}(\R^{n})}
\leq
C.
\]
Also, by \eqref{truncated_energy_bound_of_u},
$\min \{ u, l \} \in \dot{W}_{0}^{1, p}(2B)$ for all $l > 0$.
The properties
\ref{lower_bound_of_sols_to_lp}, \ref{upper_bound_of_sols_to_lp}
and \ref{comparison_principle_for_sols_to_lp}
follow from \eqref{lower_bound_of_u1} and \eqref{upper_bound_of_uj}.
\end{proof}

\begin{proof}[Proof of Theorem \ref{thm:main-p-laplacian}]
For each $k \in \N$, we consider measures
$\sigma_{k} = \mathbf{1}_{ \Omega(\sigma, k) } \sigma$
and $\mu_{k} = \mathbf{1}_{ \Omega(\mu, k) } \mu$,
where $\Omega(\sigma, k)$ and $\Omega(\mu, k)$ are defined by \eqref{cutoff_set}.
Using Lemma \ref{lem:main-p-laplacian},
we construct a sequence of $p$-superharmonic functions $\{ u_{k} \}_{k = 1}^{\infty}$ satisfying
\begin{equation}\label{eq:p-laplace_Bj}
- \laplacian_{p} u_{k} = \sigma_{k} u_{k}^{q} + \mu_{k} \quad \text{in} \ B(0, 2^{k + 1})
\end{equation}
and
\begin{equation*}
\min\{ u, l \} \in W_{0}^{1, p}(B(0, 2^{k + 1})) \quad \forall l > 0.
\end{equation*}
By the comparison principle for weak solutions and
\ref{comparison_principle_for_sols_to_lp} in Lemma \ref{lem:main-p-laplacian},
\[
u_{k + 1} \geq u_{k} \quad \text{in} \ B(0, 2^{k + 1}).
\]
We denote by $u_{k}$ the zero extension of $u_{k}$ again.

Let $u = \lim_{k \to \infty} u_{k}$.
Then by the monotone convergence theorem,
\begin{equation}\label{global_lorentz}
\| u \|_{L^{r, \rho}(\R^{n})} = \lim_{k \to \infty} \| u_{k} \|_{L^{r, \rho}(\R^{n})} \leq C.
\end{equation}
By Remark \ref{rem:approximation_of_wolff_potential},
$u_{k}^{q} \mathbf{1}_{\Omega(\sigma, k)} \uparrow u^{q}$ $d \sigma$-a.e. 
Hence, using the monotone convergence theorem twice, we get
\begin{equation}\label{global_energy}
\int_{\R^{n}}
(\W_{1, p}(u^{q} d \sigma))^{\gamma}
u^{q} \, d \sigma
\leq
C.
\end{equation}
By \eqref{global_lorentz},
$u$ is not identically infinite, and $\liminf_{|x| \to \infty} u(x) = 0$.
Therefore, $u$ is $p$-superharmonic in $\R^{n}$ and $\nu[u] = \sigma u^{q} + \mu$.
By Theorem \ref{pointwise_est_p-superharmonic}
and Corollary \ref{cor:quasi-linearity_of_energy_space},
\begin{equation*}
\begin{split}
\int_{\R^{n}} u^{\gamma} \, d \nu[u]
& \approx
\int_{\R^{n}} (\W_{1, p} \nu[u])^{\gamma} \, d \nu[u] \\
& \approx
\int_{\R^{n}} (\W_{1, p}(u^{q} d \sigma))^{\gamma} u^{q} \, d \sigma
+
\int_{\R^{n}} (\W_{1, p} \mu)^{\gamma} \, d \mu.
\end{split}
\end{equation*}
Thus, \eqref{generalized_energy} is finite
by \eqref{global_energy} and \eqref{cond:dmu-wolff}.

Fix any $x \in \R^{n}$.
Then by \ref{lower_bound_of_sols_to_lp} in Lemma \ref{lem:main-p-laplacian},
\[
\begin{split}
u(x)
\geq
u_{k}(x)
\geq
\frac{1}{C}
\left\{
( \W_{1, p}^{2^{k - 1}} \sigma_{k})^{ \frac{p - 1}{p - 1 - q} }(x) + (\W_{1, p}^{2^{k - 1}} \mu_{k})(x)
\right\}
\end{split}
\]
whenever $x \in B(0, 2^{k})$.
Passing to the limit $k \to \infty$ and
applying the monotone convergence theorem to the right-hand side, we get
\[
\begin{split}
u(x)
\geq
\frac{1}{C}
\left\{
(\W_{1, p} \sigma)^{ \frac{p - 1}{p - 1 - q} }(x) + (\W_{1, p} \mu)(x)
\right\}.
\end{split}
\]
Hence, $u$ is positive in $\R^{n}$.
If $w$ is a $p$-superharmonic supersolution to \eqref{eq:p-laplacian}, then
$w \geq u_{k}$ for all $k \in \N$
by \ref{upper_bound_of_sols_to_lp} in Lemma \ref{lem:main-p-laplacian}.
Therefore, $w \geq u$.
This implies that $u$ is minimal.
\end{proof}

\begin{Cor}\label{cor:main-p-laplacian}
Assume
that there exists a positive $p$-superharmonic supersolution
$w \in L^{q}_{\loc}(\R^{n}, d \sigma)$ to \eqref{eq:p-laplacian}
satisfying \eqref{generalized_energy} for some $\gamma > 0$.
Then there exists a minimal $p$-superharmonic solution
$u \in L^{r, \rho}(\R^{n})$ to \eqref{eq:p-laplacian}
satisfying \eqref{generalized_energy} and $w \geq u$ in $\R^{n}$.
\end{Cor}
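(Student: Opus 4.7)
The plan is to reduce Corollary \ref{cor:main-p-laplacian} to Theorem \ref{thm:main-p-laplacian} by showing that the existence of a positive supersolution $w$ satisfying \eqref{generalized_energy} forces both Wolff energy conditions \eqref{cond:dsigma-wolff} and \eqref{cond:dmu-wolff} to hold. Once these are in hand, Theorem \ref{thm:main-p-laplacian} directly produces a minimal $p$-superharmonic solution $u \in L^{r, \rho}(\R^{n})$ satisfying \eqref{generalized_energy}, and the inequality $w \geq u$ in $\R^{n}$ is immediate from the minimality of $u$ applied to the supersolution $w$. I do not expect a substantive technical obstacle; the conceptual point is that the single energy bound $\int w^{\gamma}\, d\nu[w] < \infty$ must be shown to dominate two distinct Wolff energies (one in $\sigma$, one in $\mu$), which I would obtain from two independent pointwise lower bounds on $w$.

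To derive \eqref{cond:dsigma-wolff}, I would use that, since $\mu \geq 0$, $w$ is in particular a positive $p$-superharmonic supersolution to the homogeneous equation \eqref{eq:p-laplacian_homo}. Theorem \ref{thm:lower_bound_of_sol_to_subeq} then gives the pointwise lower bound $w \geq c_{0}(\W_{1, p}\sigma)^{(p-1)/(p-1-q)}$. Combining this with $\nu[w] \geq w^{q}\, d\sigma$ yields
\[
\int_{\R^{n}} w^{\gamma} \, d\nu[w]
\geq \int_{\R^{n}} w^{\gamma + q} \, d\sigma
\geq c_{0}^{\gamma + q} \int_{\R^{n}} (\W_{1, p}\sigma)^{\frac{(\gamma + q)(p - 1)}{p - 1 - q}} \, d\sigma,
\]
so the assumed finiteness of the left-hand side delivers \eqref{cond:dsigma-wolff}.

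For \eqref{cond:dmu-wolff}, I would apply Theorem \ref{pointwise_est_p-superharmonic} on balls $B(x, R)$ and let $R \to \infty$ (using monotone convergence of the truncated Wolff potentials $\W_{1, p}^{R}\nu[w]$ to $\W_{1, p}\nu[w]$) to obtain the global bound $w(x) \geq c_{K}^{-1} \W_{1, p}\nu[w](x)$ for every $x \in \R^{n}$. Since $\nu[w] \geq \mu$ as measures, $\W_{1, p}\nu[w] \geq \W_{1, p}\mu$ pointwise, and therefore
\[
\int_{\R^{n}} w^{\gamma} \, d\nu[w]
\geq c_{K}^{-\gamma} \int_{\R^{n}} (\W_{1, p}\nu[w])^{\gamma} \, d\nu[w]
\geq c_{K}^{-\gamma} \int_{\R^{n}} (\W_{1, p}\mu)^{\gamma} \, d\mu,
\]
which establishes \eqref{cond:dmu-wolff}. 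The nondegeneracy hypothesis $(\sigma, \mu) \not\equiv (0, 0)$ needed to invoke Theorem \ref{thm:main-p-laplacian} is automatic here: otherwise $w$ would be a positive $p$-harmonic function on $\R^{n}$ with $\liminf_{|x| \to \infty} w = 0$, which is excluded by standard Liouville-type considerations.
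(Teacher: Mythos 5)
Your proposal takes essentially the same route as the paper: the paper's terse proof simply states that \eqref{cond:dsigma-wolff} and \eqref{cond:dmu-wolff} follow from Theorems \ref{thm:lower_bound_of_sol_to_subeq} and \ref{pointwise_est_p-superharmonic} respectively, and then invokes Theorem \ref{thm:main-p-laplacian} and minimality; you correctly fill in the two chains of inequalities that make these deductions explicit. One small caveat on your closing nondegeneracy remark: when $\sigma = \mu = 0$ a supersolution only needs $\nu[w] \geq 0$, which is automatic, so $w$ need not be $p$-harmonic and the Liouville argument does not apply directly; the requirement $(\sigma, \mu) \not\equiv (0,0)$ is better regarded as an implicit standing hypothesis carried over from Theorem \ref{thm:main-p-laplacian}.
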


\begin{proof}
In this case, \eqref{cond:dsigma-wolff} and \eqref{cond:dmu-wolff} are fulfilled
by Theorems
\ref{thm:lower_bound_of_sol_to_subeq} and \ref{pointwise_est_p-superharmonic},
respectively.
Hence 
there exists a minimal $p$-superharmonic solution $u$ to \eqref{eq:p-laplacian}
by Theorem \ref{thm:main-p-laplacian}.
By minimality of $u$, $w \geq u$ in $\R^{n}$.
\end{proof}

\begin{Cor}\label{cor:caccioppoli}
Assume that the assumptions of Theorem \ref{thm:main-p-laplacian}
hold with $\gamma \geq 1$.
Let $u$ be the minimal solution in Theorem \ref{thm:main-p-laplacian}.
Then $u$ belongs to $W^{1, p}_{\loc}(\R^{n})$
and satisfies \eqref{eq:p-laplacian} in the sense of weak solutions.
\end{Cor}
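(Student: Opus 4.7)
My plan is to upgrade the generalized energy bound $\int_{\R^{n}} u^{\gamma} \, d \nu[u] < \infty$ from Theorem \ref{thm:main-p-laplacian} into a weighted Caccioppoli-type estimate $\int_{\R^{n}} |\D u|^{p} u^{\gamma - 1} \, dx < \infty$, and then to use $\gamma \geq 1$ together with a level-set decomposition to reach $u \in W^{1,p}_{\loc}(\R^{n})$. The core identity,
\[
\gamma \int_{\R^{n}} |\D u|^{p} u^{\gamma - 1} \, dx = \int_{\R^{n}} u^{\gamma} \, d \nu[u],
\]
is what one formally obtains by testing $- \laplacian_{p} u = \nu[u]$ against $u^{\gamma}$. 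To make it rigorous I would revisit the bounded finite-energy approximations $\{u_{k}\}$ already constructed in the proofs of Lemma \ref{lem:main-p-laplacian} and Theorem \ref{thm:main-p-laplacian}; for these, the identity is a direct integration by parts with test function $(u_{k}^{\gamma} - \epsilon)_{+}$, precisely the computation performed in the proof of Theorem \ref{thm:mutual_energy_estimate}. Since $u_{k} \uparrow u$ and the Riesz measures $\nu[u_{k}] = \sigma_{k} u_{k}^{q} + \mu_{k}$ increase to $\nu[u] = \sigma u^{q} + \mu$, the right-hand side passes to the limit by monotone convergence and remains bounded by the finite global energy; Fatou's lemma on the left, combined with the a.e.\ convergence $\nabla u_{k} \to \D u$ which is standard for monotone sequences of $p$-superharmonic functions, yields the inequality form of the identity for $u$ and thus the desired weighted gradient bound.

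Given $\int_{\R^{n}} |\D u|^{p} u^{\gamma - 1} \, dx < \infty$, the condition $\gamma \geq 1$ lets me split at the level $\{u = 1\}$. On $\{u \geq 1\}$, $u^{\gamma - 1} \geq 1$, so $\int_{\{u \geq 1\}} |\D u|^{p} \, dx$ is controlled directly by the weighted bound. On $\{u < 1\}$, the very weak gradient $\D u$ coincides almost everywhere with $\nabla \min\{u, 1\}$, and since $\min\{u, 1\}$ is a bounded $p$-superharmonic function it is a supersolution to the $p$-Laplace equation and therefore lies in $W^{1,p}_{\loc}(\R^{n})$ by the standard theory recalled in Section \ref{sec:nlpt}. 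Combining the two pieces gives $|\D u|^{p} \in L^{1}_{\loc}(\R^{n})$, hence $u \in W^{1,p}_{\loc}(\R^{n})$; the definition of $\D u$ as $\lim_{k} \nabla \min\{u, k\}$ then forces $\D u = \nabla u$ almost everywhere.

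Once $u \in W^{1,p}_{\loc}(\R^{n})$ is secured, the weak formulation of \eqref{eq:p-laplacian} is immediate: the identity defining the Riesz measure from Section \ref{sec:nlpt},
\[
\int_{\R^{n}} |\D u|^{p-2} \D u \cdot \nabla \varphi \, dx = \int_{\R^{n}} \varphi \, d \nu[u] \quad \forall \varphi \in C_{c}^{\infty}(\R^{n}),
\]
becomes, after substituting $\D u = \nabla u$ and $\nu[u] = \sigma u^{q} + \mu$ from Definition \ref{def:shs}, exactly the desired weak formulation. I expect the main obstacle to be the limit passage at the end of the first paragraph, specifically justifying the a.e.\ convergence $\nabla u_{k} \to \D u$ and handling the possibly non-monotone integrand $|\nabla u_{k}|^{p} u_{k}^{\gamma - 1}$; a cleaner alternative might be to carry out the weighted Caccioppoli identity inside each localizing ball $2B$ from Lemma \ref{lem:main-p-laplacian} first and then let the balls exhaust $\R^{n}$.
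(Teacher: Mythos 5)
Your proposal is correct in outline but takes a genuinely different route from the paper's. The paper does not pass to a global weighted Caccioppoli identity and does not use a level-set decomposition. Instead, it works directly at the level of the bounded finite-energy approximations $\{u_{j}\}$ from Lemma \ref{lem:main-p-laplacian}: it first derives a uniform bound on $\|u_{j+1}\|_{L^{\gamma}(2B, d\mu)}$ via Corollary \ref{cor:quasi-linearity_of_energy_space} and Theorem \ref{pointwise_est_p-superharmonic}, then tests the recursion \eqref{definition_of_u_j} against $u_{j+1}\eta^{p}$ with $\eta$ a fixed cutoff, absorbs the cross term with Young's inequality, and controls the three resulting terms $\int u_{j+1}^{p}|\nabla\eta|^{p}\,dx$, $\int u_{j+1}^{1+q}\,d\sigma$, $\int u_{j+1}\,d\mu$ using the Lorentz bound and H\"older together with $\gamma \geq 1$. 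This yields uniform $W^{1,p}(B(x_{0},R_{0}))$ bounds on $u_{j}$ and the conclusion follows by weak compactness. The advantage of the paper's route is that it never needs a.e.\ convergence of $\nabla u_{j}$ to $\D u$ — weak $L^{p}$-convergence, which comes for free from the uniform bound and monotonicity of $u_{j}$, suffices.

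Your approach, by contrast, asks for the a.e.\ convergence $\nabla u_{j}\to \D u$ in order to apply Fatou to the weighted integrand $|\nabla u_{j}|^{p}u_{j}^{\gamma-1}$. You flag this yourself as the main obstacle, and indeed it is the one point that requires a nontrivial citation: a.e.\ gradient convergence for an increasing sequence of $p$-supersolutions converging to a $p$-superharmonic limit is a known fact (it underlies, e.g., the weak continuity result in \cite{MR1890997}), but it is not immediate from the definitions. If you are willing to invoke it, the rest of your argument is sound and arguably more conceptual: the global identity $\gamma\int|\D u|^{p}u^{\gamma-1}\,dx = \int u^{\gamma}\,d\nu[u]$ together with the split at $\{u=1\}$ cleanly isolates why $\gamma\geq 1$ is needed, and the observation that $\D u = \nabla\min\{u,1\}$ a.e.\ on $\{u<1\}$ is exactly right. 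One small gap you should also close explicitly: membership in $W^{1,p}_{\loc}$ requires $u\in L^{p}_{\loc}$ in addition to $|\D u|\in L^{p}_{\loc}$. This does follow, because $u\in L^{r,\rho}(\R^{n})$ with $r = n(p-1+\gamma)/(n-p)\geq np/(n-p)>p$ precisely when $\gamma\geq 1$, but it should be stated. Your final paragraph deriving the weak formulation from $\D u = \nabla u$ and $\nu[u]=\sigma u^{q}+\mu$ is correct and matches the paper.
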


\begin{proof}
We give additional estimates for 
solutions $\{ u_{j} \}_{j = 1}^{\infty}$ in Lemma \ref{lem:main-p-laplacian}.
By Corollary \ref{cor:quasi-linearity_of_energy_space},
\begin{equation*}
\int_{\R^{n}}
(\W_{1, p} \nu[u_{j}])^{\gamma}
d \nu[u_{j}]
\approx
\int_{\R^{n}}
(\W_{1, p}(u_{j}^{q} d \sigma))^{\gamma}
u_{j}^{q} \, d \sigma
+
\int_{\R^{n}}
(\W_{1, p} \mu)^{\gamma}
\, d \mu.
\end{equation*}
The right-hand side is estimated by
\ref{norm_bounds_for_sols_to_lp} in Lemma \ref{lem:main-p-laplacian}
and \eqref{cond:dmu-wolff}.
Thus, by Theorem \ref{pointwise_est_p-superharmonic},
\[
\begin{split}
\| u_{j + 1} \|_{L^{\gamma}(2B, d \mu)}
& \leq
c_{K} \| \W_{1, p} \nu[u_{j}] \|_{L^{\gamma}(2B, d \mu)} \\
& \leq
c_{K} \| \W_{1, p} \nu[u_{j}] \|_{L^{\gamma}(\R^{n}, d \nu[u_{j}])}
\leq
C.
\end{split}
\]
Fix a ball $B(x_{0}, R_{0}) \subset B$, and
take a nonnegative function $\eta \in C_{c}^{\infty}(B(x_{0}, 2R_{0}))$ satisfying
$\eta \equiv 1$ on $B(x_{0}, R_{0})$
and $|\nabla \eta| \leq C / R_{0}$.
Testing \eqref{definition_of_u_j} with $u_{j + 1} \eta^{p}$
and using Young's inequality, we get
\[
\begin{split}
&
\int_{2B} |\nabla u_{j + 1} |^{p} \eta^{p} \, dx \\
& \leq
C \left(
\int_{2B} u_{j + 1}^{p} |\nabla \eta|^{p} \, dx
+
\int_{2B} u_{j + 1} \eta^{p} u_{j}^{q} \, d \sigma
+
\int_{2B} u_{j + 1} \eta^{p} \, d \mu
\right) \\
& \leq
C \left(
\frac{C}{R_{0}^{p}}
\int_{B(x_{0}, 2R_{0})} u_{j + 1}^{p} \, dx
+
\int_{B(x_{0}, 2R_{0})} u_{j + 1}^{1 + q} \, d \sigma
+
\int_{B(x_{0}, 2R_{0})} u_{j + 1} \, d \mu
\right).
\end{split}
\]
Since $\gamma \geq 1$,
by
\ref{norm_bounds_for_sols_to_lp} in Lemma \ref{lem:main-p-laplacian}
and H\"{o}lder's inequality,
\[
\| u_{j + 1} \|_{L^{p}(B(x_{0}, 2R_{0}))}
\leq
C R_{0}^{\frac{n}{p} - \frac{n - p}{p - 1 + \gamma}}
\| u \|_{L^{r, \rho}(2B)}
\leq
C
\]
and
\[
\| u_{j + 1} \|_{W^{1, p}(B(x_{0}, R_{0}))} \leq C'.
\]
Here, the constant $C'$ depends also on
$R_{0}$, $\sigma( \cl{B(x_{0}, 2R_{0})} )$ and $\mu( \cl{B(x_{0}, 2R_{0})} )$,
but not on $j \in \N$ and $B$.
Therefore, $\lim_{j \to \infty} u_{j} = u \in W^{1, p}(B(x_{0}, R_{0}))$
and $\| u \|_{W^{1, p}(B(x_{0}, R_{0}))} \leq C'$.
Applying the same limit argument to
$\{ u_{k} \}_{k = 1}^{\infty}$ in Theorem \ref{thm:main-p-laplacian},
we see that $\lim_{k \to \infty} u_{k}= u \in W^{1, p}_{\loc}(\R^{n})$.
\end{proof}

\section{Remarks for the endpoint cases}\label{sec:endpoint}

When $\gamma = \infty$, we replace \eqref{cond:dsigma-wolff} and \eqref{cond:dmu-wolff} 
with the following conditions:
\begin{equation}\tag{\ref{cond:dsigma-wolff}$'$}\label{cond:dsigma-wolff_infty}
\| \W_{1, p} \sigma \|_{L^{\infty}(\R^{n}, d \sigma)}
< \infty,
\end{equation}
\begin{equation}\tag{\ref{cond:dmu-wolff}$'$}\label{cond:dmu-wolff_infty}
\| \W_{1, p} \mu \|_{L^{\infty}(\R^{n}, d \mu)}
< \infty.
\end{equation}

\begin{Thm}\label{thm:main-p-laplacian_infty}
Let $1 < p < n$ and $0 < q < p - 1$.
Let $\sigma, \mu \in \M^{+}(\R^{n})$ with $(\sigma, \mu) \not\equiv (0, 0)$.
Assume that \eqref{cond:dsigma-wolff_infty} and \eqref{cond:dmu-wolff_infty} hold.
Then there exists a minimal bounded weak solution
$u \in W^{1, p}_{\loc}(\R^{n}) \cap L^{\infty}(\R^{n})$ to \eqref{eq:p-laplacian}.
Conversely,
if there exists a bounded $p$-superharmonic supersolution
$u \in L^{q}_{\loc}(\R^{n}, d \sigma)$ to \eqref{eq:p-laplacian},
then \eqref{cond:dsigma-wolff_infty} and \eqref{cond:dmu-wolff_infty} are fulfilled.
\end{Thm}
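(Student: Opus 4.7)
The plan is to adapt the construction of Lemma~\ref{lem:main-p-laplacian} and Theorem~\ref{thm:main-p-laplacian} to the endpoint $\gamma=\infty$, substituting the weighted norm bound of Lemma~\ref{lem:weighted_norm_inequality} by a pointwise $L^{\infty}$ bound coming from the weak maximum principle \eqref{wolff-max}. A first observation is that \eqref{cond:dsigma-wolff_infty}--\eqref{cond:dmu-wolff_infty}, combined with \eqref{wolff-max}, upgrade the supports-only bounds to global bounds $\|\W_{1,p}\sigma\|_{L^{\infty}(\R^{n})}, \|\W_{1,p}\mu\|_{L^{\infty}(\R^{n})}<\infty$. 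In particular, $\sigma$ and $\mu$ are absolutely continuous with respect to the $p$-capacity.

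For each $k\in\N$ I would set $\sigma_{k}=\mathbf{1}_{\Omega(\sigma,k)}\sigma$ and $\mu_{k}=\mathbf{1}_{\Omega(\mu,k)}\mu$ (as in \eqref{cutoff_set}) and, on the ball $2B_{k}=B(0,2^{k+1})$, build an increasing sequence $\{u_{k}^{(j)}\}_{j=0}^{\infty}$ of bounded finite-energy $p$-superharmonic solutions to
\[
-\laplacian_{p} u_{k}^{(j+1)} = \sigma_{k}\,(u_{k}^{(j)})^{q} + \mu_{k}\quad\text{in }2B_{k},
\]
starting from $u_{k}^{(0)}=c_{1}(\W_{1,p}\sigma_{k})^{(p-1)/(p-1-q)}$, exactly as in the proof of Lemma~\ref{lem:main-p-laplacian}. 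The crucial new step is to propagate a uniform $L^{\infty}$ bound. Using the upper pointwise estimate of Theorem~\ref{pointwise_est_p-superharmonic}, together with \eqref{quasi-linearity} and \eqref{wolff-max} applied to the measure $(u_{k}^{(j)})^{q}d\sigma_{k}$, one obtains
\[
\|u_{k}^{(j+1)}\|_{L^{\infty}(2B_{k})}
\leq C\bigl(\|u_{k}^{(j)}\|_{L^{\infty}(2B_{k})}^{q/(p-1)}\|\W_{1,p}\sigma\|_{L^{\infty}(\R^{n})} + \|\W_{1,p}\mu\|_{L^{\infty}(\R^{n})}\bigr),
\]
with a constant $C=C(n,p)$ independent of $j$ and $k$. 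Because $q/(p-1)<1$, a Young-type iteration yields a uniform bound $\|u_{k}^{(j)}\|_{L^{\infty}(2B_{k})}\leq M_{*}$ depending only on $n$, $p$, $q$, $\|\W_{1,p}\sigma\|_{L^{\infty}(\R^{n})}$ and $\|\W_{1,p}\mu\|_{L^{\infty}(\R^{n})}$.

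After passing to the limits $j\to\infty$ and $k\to\infty$ along the diagonal (as in the proof of Theorem~\ref{thm:main-p-laplacian}), we obtain a bounded $p$-superharmonic function $u$ with $\nu[u]=\sigma u^{q}+\mu$ and $\liminf_{|x|\to\infty}u(x)=0$; nontriviality and positivity come from the pointwise lower bound in Lemma~\ref{lem:main-p-laplacian}, and minimality from the comparison principle for renormalized solutions as in that lemma. The $W^{1,p}_{\loc}(\R^{n})$ regularity follows from a Caccioppoli estimate along the lines of Corollary~\ref{cor:caccioppoli}: boundedness of $u$ allows us to test the localized equation against $u_{k}^{(j+1)}\eta^{p}$ for a smooth cutoff $\eta$, yielding uniform local $W^{1,p}$ bounds. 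For the converse, Theorem~\ref{pointwise_est_p-superharmonic} gives $w\geq c_{K}^{-1}\W_{1,p}\mu$ for any bounded $p$-superharmonic supersolution $w$, while Theorem~\ref{thm:lower_bound_of_sol_to_subeq} (applied after discarding $\mu\geq 0$) gives $w\geq c_{0}(\W_{1,p}\sigma)^{(p-1)/(p-1-q)}$, so both $\|\W_{1,p}\sigma\|_{L^{\infty}(\R^{n})}$ and $\|\W_{1,p}\mu\|_{L^{\infty}(\R^{n})}$ are finite. The main obstacle I expect is showing that the recursion constant is truly independent of $k$ and $j$; this is precisely where \eqref{wolff-max} is essential, since it reduces $\|\W_{1,p}((u_{k}^{(j)})^{q}d\sigma_{k})\|_{L^{\infty}(\R^{n})}$ to its supremum on $\spt\sigma_{k}$, where $u_{k}^{(j)}$ is trivially controlled by $\|u_{k}^{(j)}\|_{L^{\infty}}$ and $\W_{1,p}\sigma$ is globally bounded by hypothesis.
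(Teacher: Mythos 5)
Your overall architecture matches the paper's proof quite closely: localize on $2B_k$, iterate, propagate a uniform $L^\infty$ bound via \eqref{wolff-max}, pass to the limit, and use Theorem~\ref{pointwise_est_p-superharmonic} together with Theorem~\ref{thm:lower_bound_of_sol_to_subeq} for the converse. The recursion $\|u_{k}^{(j+1)}\|_{\infty}\leq C(\|u_{k}^{(j)}\|_{\infty}^{q/(p-1)}+1)$ and the resulting Young-type bound are exactly the paper's step, and your identification of \eqref{wolff-max} as the crucial replacement for Lemma~\ref{lem:weighted_norm_inequality} is correct.

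There is, however, a genuine gap: you assert that the limit function satisfies $\liminf_{|x|\to\infty} u(x) = 0$ without any argument. In the proof of Theorem~\ref{thm:main-p-laplacian} this decay was a free consequence of the Lorentz bound $\|u\|_{L^{r,\rho}(\R^{n})}\leq C$ with $r,\rho<\infty$, which forces the infimum over large spheres to vanish. At the endpoint $\gamma=\infty$ that estimate disappears; a uniform $L^\infty$ bound alone is compatible with $u\equiv 1$. The paper closes this by first deriving the pointwise upper bound $u_k(x)\leq C\|u\|_{\infty}^{q/(p-1)}\,\W_{1,p}\sigma(x)+C\,\W_{1,p}\mu(x)$ from Theorem~\ref{pointwise_est_p-superharmonic} and \eqref{quasi-linearity}, and then invoking the decay-at-infinity result for bounded Wolff potentials (Corollary~3.2 of \cite{MR3567503}). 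You need to supply that argument; otherwise the limit function is not known to be a solution of \eqref{eq:p-laplacian} as the problem requires.

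A smaller remark: for $W^{1,p}_{\loc}$ regularity you propose a Caccioppoli estimate in the spirit of Corollary~\ref{cor:caccioppoli}. This works because $u$ is bounded, but it is unnecessary effort — every bounded $p$-superharmonic function lies in $W^{1,p}_{\loc}$ by \cite[Theorem~7.25]{MR2305115}, which is what the paper uses. Also note that the paper builds $u_0$ from the finite-energy $p$-superharmonic solution $v$ of $-\Delta_p v=\tilde\sigma$ rather than directly from $\W_{1,p}\sigma_k$; starting from $v^{(p-1)/(p-1-q)}$ is what makes the subsolution inequality $-\Delta_p u_0\leq \sigma u_0^{q}$ verifiable by a direct weak-form computation. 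If you start from the Wolff potential itself you would still need to justify that it is a genuine subsolution, which is not immediate.
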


\begin{proof}
We consider solutions to \eqref{eq:p-laplace_B}.
As in the proof of Lemma \ref{lem:main-p-laplacian},
let $\{ u_{j} \}_{j = 1}^{\infty}$
be the sequence of $p$-superharmonic functions defined in
\eqref{definition_of_u_0} and \eqref{definition_of_u_j}.
Then by \eqref{cond:dsigma-wolff_infty}
and \eqref{wolff-max},
\begin{equation}\label{sub-natural_estimate_infty}
\begin{split}
\| \W_{1, p} ( u_{j}^{q} d \sigma) \|_{L^{\infty}(2B)}
\leq
C \| u_{j} \|_{L^{\infty}(2B, d \sigma)}^{\frac{q}{p - 1}}
\end{split}
\end{equation}
for all $j \geq 0$.
Replacing \eqref{sub-natural_estimate} by \eqref{sub-natural_estimate_infty}
and using \eqref{cond:dmu-wolff_infty}, we get
\[
\| u_{j + 1} \|_{L^{\infty}(2B)}
\leq
C\left( \| u_{j} \|_{L^{\infty}(2B)}^{\frac{q}{p - 1}} + 1 \right).
\]
Therefore,
\[
\| u_{j} \|_{L^{\infty}(2B)} \leq C
\]
for all $j \geq 1$
and $u = \lim_{j \to \infty} u_{j}$ is a bounded solution to \eqref{eq:p-laplace_B}.
Using solutions to \eqref{eq:p-laplace_Bj},
we construct a $p$-superharmonic function $u$ in $\R^{n}$
satisfying $\nu[u] = \sigma u^{q} + \mu$ in $\R^{n}$.
Then by the bound of solutions to \eqref{eq:p-laplace_Bj},
$u$ is bounded on $\R^{n}$.
Moreover, by Theorem \ref{pointwise_est_p-superharmonic},
\[
\begin{split}
u_{k}(x)
\leq
c_{K} \W_{1, p} \nu[u_{k}](x)
\leq
C
\| u \|_{L^{\infty}(\R^{n})}^{\frac{q}{p - 1}}
\W_{1, p} \sigma(x)
+
C  \W_{1, p} \mu(x)
\end{split}
\]
for all $k \geq 1$ and for all $x \in \R^{n}$.
By \cite[Corollary 3.2]{MR3567503},
this implies that $\liminf_{|x| \to \infty} u(x) = 0$.
From the arguments in the proof of Theorem \ref{thm:main-p-laplacian},
it follows that $u$ is a minimal $p$-superharmonic solution to \eqref{eq:p-laplacian}.
Since $u$ is a bounded $p$-superharmonic function,
$u \in W^{1, p}_{\loc}(\R^{n})$
by  \cite[Theorem 7.25]{MR2305115}.

The converse part follows from
Theorems \ref{pointwise_est_p-superharmonic} 
and \ref{thm:lower_bound_of_sol_to_subeq}.
\end{proof}

The case of $\gamma = 0$ is more delicate.
We give a sufficient condition for the existence of minimal solutions to \eqref{eq:p-laplacian}.

\begin{Prop}\label{thm:main-p-laplacian_zero}
Let $1 < p < n$ and $0 < q < p - 1$.
Let $\sigma, \mu \in \M^{+}(\R^{n})$ with $(\sigma, \mu) \not\equiv (0, 0)$.
Assume that there exists a positive function $w \in L^{q}(\R^{n}, d \sigma)$ satisfying
\[
w =  \W_{1, p}(w^{q} d \sigma) \quad \text{in} \ \R^{n}.
\]
Assume also that $\mu$ is finite and absolutely continuous with respect to the $p$-capacity.
Then there exists a minimal $p$-superharmonic solution
$u \in L^{q}(\R^{n}, d \sigma) \cap L^{\frac{n(p - 1)}{n - p}, \infty}(\R^{n})$
to \eqref{eq:p-laplacian}.
Moreover, the Riesz measure of $u$ is finite.
\end{Prop}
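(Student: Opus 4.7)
The plan is to mirror the proof of Theorem~\ref{thm:main-p-laplacian} at the endpoint $\gamma = 0$, using the finiteness of $\mu$ and a classical trace inequality in place of Theorem~\ref{thm:mutual_energy_estimate}. First I would extract from the hypothesis on $w$ the a priori bound
\[
\int_{\R^{n}} (\W_{1, p} \sigma)^{\frac{q(p - 1)}{p - 1 - q}} \, d \sigma < \infty,
\]
i.e., the formal $\gamma = 0$ endpoint of \eqref{cond:dsigma-wolff}. Since $w \in L^{q}(\R^{n}, d \sigma)$, the measure $w^{q} d \sigma$ is finite, so there exists a minimal nonnegative $p$-superharmonic function $\tilde{w}$ on $\R^{n}$ with $\nu[\tilde{w}] = w^{q} d \sigma$ and $\liminf_{|x| \to \infty} \tilde{w}(x) = 0$. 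Theorem~\ref{pointwise_est_p-superharmonic} yields $\tilde{w} \approx w$, so $\tilde{w}$ is a positive $p$-superharmonic supersolution to $-\Delta_{p} U = c \sigma U^{q}$ for an appropriate constant $c > 0$, and Theorem~\ref{thm:lower_bound_of_sol_to_subeq} then gives $w \geq c' (\W_{1, p} \sigma)^{(p - 1)/(p - 1 - q)}$; integrating against $d \sigma$ produces the displayed bound. By the endpoint Cascante-Ortega-Verbitsky trace inequality, this condition is equivalent to the weighted norm inequality
\[
\| \W_{1, p}(f d\sigma) \|_{L^{q}(\R^{n}, d \sigma)} \leq C \| f \|_{L^{1}(\R^{n}, d \sigma)}^{1/(p - 1)},
\]
and in particular gives $\| \W_{1, p} \mu \|_{L^{q}(\R^{n}, d \sigma)} \leq C \mu(\R^{n})^{1/(p - 1)}$.

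Next, I would truncate $\sigma_{k} = \mathbf{1}_{\Omega(\sigma, k)} \sigma$, $\mu_{k} = \mathbf{1}_{\Omega(\mu, k)} \mu$ via \eqref{cutoff_set} and run the iteration of Lemma~\ref{lem:main-p-laplacian} on $B_{k} = B(0, 2^{k + 1})$: starting from $u_{0} = 0$, let $u_{j + 1}$ be the bounded finite-energy $p$-superharmonic solution to $-\Delta_{p} u_{j + 1} = \sigma_{k} u_{j}^{q} + \mu_{k}$ in $B_{k}$ with zero boundary values, an increasing sequence by the comparison principle. Combining Theorem~\ref{pointwise_est_p-superharmonic}, the quasi-linearity \eqref{quasi-linearity}, and the trace inequalities from Step~1 yields the recursion
\[
\| u_{j + 1} \|_{L^{q}(\R^{n}, d \sigma)} \leq C \bigl( \| u_{j} \|_{L^{q}(\R^{n}, d \sigma)}^{q/(p - 1)} + 1 \bigr),
\]
which closes uniformly in $j$ by Young's inequality, as $q/(p - 1) < 1$. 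Passing to the limit $j \to \infty$ via the weak continuity of Riesz measures \cite[Theorem 3.1]{MR1890997} yields a localized solution $u^{(k)}$; the comparison argument in \ref{comparison_principle_for_sols_to_lp} of Lemma~\ref{lem:main-p-laplacian} makes $\{ u^{(k)} \}_{k}$ increasing, and the monotone limit $u = \lim_{k} u^{(k)}$ satisfies $\nu[u] = \sigma u^{q} + \mu$ by another application of weak continuity.

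The uniform $L^{q}(d\sigma)$ bound for $u$ together with $\mu(\R^{n}) < \infty$ shows $\nu[u]$ is finite, whence Theorem~\ref{pointwise_est_p-superharmonic} combined with the Marcinkiewicz estimate for Wolff potentials of finite measures (cf. the Havin-Maz'ya bound used in Corollary~\ref{cor:lorentz_integrability_of_wolff_potentials}) yields $u \leq c_{K} \W_{1, p} \nu[u] \in L^{n(p - 1)/(n - p), \infty}(\R^{n})$. Minimality is inherited from the localized construction: any positive $p$-superharmonic supersolution $U$ of \eqref{eq:p-laplacian} dominates each iterate $u_{j}^{(k)}$ by the renormalized comparison principle invoked in \ref{upper_bound_of_sols_to_lp} of Lemma~\ref{lem:main-p-laplacian}, and the monotone limit preserves this bound. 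The main obstacle is Step~1, namely installing the $\gamma = 0$ endpoint of the weighted norm inequality, which is not directly covered by Lemma~\ref{lem:weighted_norm_inequality} and must be imported separately from the Cascante-Ortega-Verbitsky theory. Once this endpoint inequality is in hand, the remainder is a routine adaptation of the proof of Theorem~\ref{thm:main-p-laplacian}, though care is needed to verify that weak continuity identifies $\nu[u]$ exactly with $\sigma u^{q} + \mu$ rather than a possibly strict upper bound.
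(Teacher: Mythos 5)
Your overall architecture mirrors the paper's: localize to balls $B(0, 2^{k+1})$, iterate, close the $L^q(d\sigma)$ norm bound via Young's inequality using $q/(p-1)<1$, pass to the limit with weak continuity of Riesz measures, and deduce the weak Lorentz bound and minimality. That scaffolding is sound and matches the paper. The gap is concentrated in Step~1, as you already suspect, but it is sharper than ``import from Cascante--Ortega--Verbitsky.''

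First, your derivation of $\int (\W_{1,p}\sigma)^{q(p-1)/(p-1-q)}\,d\sigma < \infty$ from the fixed-point hypothesis is correct, but it is a detour: at $\gamma=0$ this integral condition is not known (and should not be expected) to characterize, or even to imply, the weighted norm inequality you need. That is precisely why the proposition is stated with the fixed-point hypothesis $w = \W_{1,p}(w^q\,d\sigma)$ rather than the $\gamma=0$ version of \eqref{cond:dsigma-wolff}, and why the paper uses the fixed-point hypothesis directly via \cite[Theorem 4.4]{MR3567503}. Second, and more seriously, the inequality $\|\W_{1,p}(f\,d\sigma)\|_{L^q(d\sigma)} \leq C\|f\|_{L^1(d\sigma)}^{1/(p-1)}$ only controls Wolff potentials of measures absolutely continuous with respect to $\sigma$; it does not ``in particular give'' $\|\W_{1,p}\mu\|_{L^q(d\sigma)} \leq C\mu(\R^n)^{1/(p-1)}$ for an arbitrary finite $\mu$. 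Since $\mu$ has no a priori relation to $\sigma$, you need the strong form of the estimate valid for \emph{all} $\nu\in\M^+(\R^n)$, which is again exactly what \cite[Theorem 4.4]{MR3567503} supplies under the fixed-point hypothesis. Once both bounds are taken from that source, your iteration closes. One minor point on the ordering: to apply the global estimate $u\leq c_K\W_{1,p}\nu[u]$ and the Havin--Maz'ya majorant you need $\liminf_{|x|\to\infty}u=0$, which is what the weak Lorentz bound is supposed to deliver; the paper avoids this circularity by proving the $L^{n(p-1)/(n-p),\infty}$ bound at the local level via \cite[Lemma 4.1]{MR1354907} (using $\min\{u,l\}\in W_0^{1,p}(2B)$ and finite total Riesz mass on $2B$), and only then passes to $\R^n$ with the Fatou property of the weak Lorentz norm.
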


\begin{proof}
As above, we consider the localized problem \eqref{eq:p-laplace_B}
and its approximate solutions $\{ u_{j} \}_{j = 1}^{\infty}$.
Then by \cite[Theorem 4.4]{MR3567503}, we have
\[
\| \W_{1, p} ( u_{j}^{q} d \sigma) \|_{L^{q}(2B, d \sigma)}
\leq
C \| u_{j} \|_{L^{q}(2B, d \sigma)}^{\frac{q}{p - 1}}
\]
and
\[
\| \W_{1, p} \mu \|_{L^{q}(2B, d \sigma)}
\leq
C.
\]
From Theorem \ref{pointwise_est_p-superharmonic} and \eqref{quasi-linearity},
it follows that
\[
\| u_{j + 1} \|_{L^{q}(2B, d \sigma)}
\leq
C( \| u_{j} \|_{L^{q}(2B, d \sigma)}^{\frac{q}{p - 1}} + 1).
\]
Thus, 
$u = \lim_{j \to \infty} u_{j}$ belongs to $L^{q}(2B, d \sigma)$,
and its Riesz measure $\nu[u]$ satisfies
\[
\left(
\int_{2B} \, d \nu[u]
\right)^{\frac{1}{p - 1}}
=
\left(
\int_{2B} u^{q} \, d \sigma
+
\int_{2B} \, d \mu
\right)^{\frac{1}{p - 1}}
\leq
C.
\]
Since $\min\{ u, l \} \in W_{0}^{1, p}(2B)$ for all $l > 0$,
it follows from \cite[Lemma 4.1]{MR1354907} that
\begin{equation}\label{weak_lp_bound}
\| u \|_{L^{\frac{n(p - 1)}{n - p}, \infty}(2B)}
\leq C.
\end{equation}
Using solutions to \eqref{eq:p-laplace_Bj},
we construct a $p$-superharmonic function $u$ in $\R^{n}$ satisfying
$\nu[u] = \sigma u^{q} + \mu$.
Then by \eqref{weak_lp_bound} and
the Fatou property of the $L^{r, \infty}$ norm (see, e.g., \cite[p.14]{MR2445437}),
\[
\| u \|_{L^{\frac{n(p - 1)}{n - p}, \infty}(\R^{n})} \leq C.
\]
Hence $\liminf_{|x| \to \infty} u(x) = 0$.
Using the same argument as in the proof of Theorem \ref{thm:main-p-laplacian},
we see that $u$ is a minimal $p$-superharmonic solution to \eqref{eq:p-laplacian}.
By the monotone convergence theorem, $u \in L^{q}(\R^{n}, d \sigma)$.
Hence, the Riesz measure of $u$ is finite.
\end{proof}

\section{Elliptic equations with several sub-natural growth terms}\label{sec:several_terms}

\begin{Thm}\label{thm:main-p-laplacian_several_terms}
Let $1 < p < n$.
Let $\{ q_{m} \}_{m = 1}^{M}$ be positive constants satisfying
\begin{equation}\label{cond_q_several}
0 < q_{m} < p - 1,
\quad m = 1, \dots, M.
\end{equation}
Let $\{ \sigma^{(m)} \}_{m = 1}^{M}$ and $\mu$ be Radon measures on $\R^{n}$
such that $(\sigma^{(1)}, \dots, \sigma^{(m)}, \mu)  \neq (0, \dots, 0, 0)$.
Assume that there exists a positive constant $0 < \gamma < \infty$ such that
\begin{equation}\label{cond_sigma_several}
\int_{\R^{n}}
(\W_{1, p} \sigma^{(m)})^{\frac{(\gamma + q_{m})(p - 1)}{p - 1 - q_{m}}}
\, d \sigma^{(m)}
< \infty,
\quad
m = 1, \dots, M,
\end{equation}
\begin{equation}\label{cond_mu_several}
\int_{\R^{n}} (\W_{1, p} \mu)^{\gamma} \, d \mu
< \infty.
\end{equation}
Then there exists a minimal $p$-superharmonic solution $u$ to \eqref{eq:several_terms}.
Moreover, $u$ satisfies \eqref{generalized_energy}
and belongs to $L^{r, \rho}(\R^{n})$,
where $r = n(p - 1 + \gamma) / (n - p)$ and $\rho = p - 1 + \gamma$.
\end{Thm}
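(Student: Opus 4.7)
The plan is to follow the strategy of Theorem \ref{thm:main-p-laplacian} with the mutual energy estimate of Theorem \ref{thm:mutual_energy_estimate} playing a more central role in controlling the cross interactions among the measures $\sigma^{(m)}$. As in Section \ref{sec:existence}, I would first solve a localized problem on $2B$ (for compactly supported $\sigma^{(m)}, \mu$ whose Wolff potentials are bounded on their respective supports) via monotone iteration, and then pass to the limit along the exhaustion $B(0, 2^{k})$.

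For the local problem, I would start with $u_{0} = c_{1} v^{(p - 1)/(p - 1 - q_{m_{0}})}$, where $v$ is the bounded finite-energy $p$-potential of some nontrivial $\sigma^{(m_{0})}$. Discarding the remaining nonnegative terms, $u_{0}$ remains a subsolution of the full multi-term equation, and one defines
\[
-\laplacian_{p} u_{j + 1} = \sum_{m = 1}^{M} \sigma^{(m)} u_{j}^{q_{m}} + \mu \quad \text{in} \ 2B,
\]
obtaining a bounded nondecreasing sequence as in the proof of Lemma \ref{lem:main-p-laplacian}.

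The heart of the matter is a uniform bound on $X_{j} := \sum_{k = 1}^{M} \|u_{j}\|_{L^{\gamma + q_{k}}(2B, d \sigma^{(k)})}$. By Theorem \ref{pointwise_est_p-superharmonic} and \eqref{quasi-linearity},
\[
u_{j + 1} \leq C \sum_{m = 1}^{M} \W_{1, p}(u_{j}^{q_{m}} d \sigma^{(m)}) + C \W_{1, p} \mu.
\]
The diagonal contributions are treated exactly as in \eqref{sub-natural_estimate}: Lemma \ref{lem:weighted_norm_inequality} gives the bound $\|\W_{1, p}(u_{j}^{q_{k}} d \sigma^{(k)})\|_{L^{\gamma + q_{k}}(d \sigma^{(k)})} \leq C \|u_{j}\|_{L^{\gamma + q_{k}}(d \sigma^{(k)})}^{q_{k}/(p - 1)}$. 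The off-diagonal contributions are the crucial new ingredient: for $m \neq k$, applying Theorem \ref{thm:mutual_energy_estimate} to the pair $(u_{j}^{q_{m}} d \sigma^{(m)}, \sigma^{(k)})$ with exponent $q_{k}$, then a Hölder step on $\int (\W_{1, p}(u_{j}^{q_{m}} d \sigma^{(m)}))^{\gamma} u_{j}^{q_{m}} \, d \sigma^{(m)}$ combined once more with Lemma \ref{lem:weighted_norm_inequality} on $\sigma^{(m)}$, I expect
\[
\|\W_{1, p}(u_{j}^{q_{m}} d \sigma^{(m)})\|_{L^{\gamma + q_{k}}(d \sigma^{(k)})} \leq C \|u_{j}\|_{L^{\gamma + q_{m}}(d \sigma^{(m)})}^{q_{m}/(p - 1)}
\]
with constants independent of $j$. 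The $\mu$-term is bounded uniformly by a direct application of Theorem \ref{thm:mutual_energy_estimate} together with \eqref{cond_mu_several}. Summing over $k$ and invoking $q_{m}/(p - 1) < 1$ with Young's inequality yields $X_{j + 1} \leq C (X_{j}^{\alpha} + 1)$ for some $\alpha < 1$, and the monotonicity of $u_{j}$ then gives $X_{j} \leq C$ uniformly in $j$.

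Granted this bound, the remaining steps are routine adaptations of the proofs of Lemma \ref{lem:main-p-laplacian} and Theorem \ref{thm:main-p-laplacian}: the truncation estimate analogous to \eqref{truncated_energy_bound_of_u} gives $\min\{u, l\} \in \dot{W}_{0}^{1, p}(2B)$ for all $l > 0$; Corollary \ref{cor:quasi-linearity_of_energy_space} and Corollary \ref{cor:lorentz_integrability_of_wolff_potentials} supply the Lorentz bound and the finiteness of $\int u^{\gamma} \, d \nu[u]$; Theorem \ref{thm:lower_bound_of_sol_to_subeq} applied to any nontrivial $\sigma^{(m)}$ (or $\mu$ via Theorem \ref{pointwise_est_p-superharmonic}) yields pointwise lower bounds and thus positivity of $u$; and the comparison argument relying on the absolute continuity of each $\sigma^{(m)}$ with respect to the $p$-capacity (which follows from \eqref{cond_sigma_several}) guarantees that any $p$-superharmonic supersolution of \eqref{eq:several_terms} dominates every $u_{j}$. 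Exhausting $\R^{n}$ by $B(0, 2^{k})$ and using monotone convergence as in the proof of Theorem \ref{thm:main-p-laplacian} then produces the desired minimal solution. The only real obstacle is the off-diagonal estimate in the iteration, which is precisely the setting for which Theorem \ref{thm:mutual_energy_estimate} was designed.
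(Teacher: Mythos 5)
Your proposal is correct and follows essentially the same route as the paper: monotone iteration on localized problems, with Theorem~\ref{thm:mutual_energy_estimate} and Lemma~\ref{lem:weighted_norm_inequality} providing the uniform bound on $\sum_m\|u_j\|_{L^{\gamma+q_m}(d\sigma^{(m)})}$, followed by exhaustion. The only cosmetic differences are that the paper constructs a subsolution $u_0^{(m)}$ from \emph{every} $\sigma^{(m)}$ rather than a single $m_0$ (which matters only for keeping notation uniform when several measures are nontrivial), and that it runs all pairs $(l,m)$ through the same chain Theorem~\ref{thm:mutual_energy_estimate} $\to$ H\"older $\to$ Lemma~\ref{lem:weighted_norm_inequality} instead of splitting diagonal from off-diagonal as you do; both yield identical bounds.
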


\begin{proof}
First, we consider localized problems.
For $k \in \N$, we consider measures
$\sigma^{(m)}_{k} = \mathbf{1}_{ \Omega(\sigma^{(m)}, k) } \sigma$
and $\mu_{j} = \mathbf{1}_{ \Omega(\mu, j) } \mu$,
where $\Omega(\sigma^{(m)}, k)$ and $\Omega(\mu, k)$ are defined by \eqref{cutoff_set}.
For simplicity of notation,
we temporarily write $B(0, 2^{k})$, $\sigma^{(m)}_{k}$ and $\mu_{k}$
as $B$, $\sigma^{(m)}$ and $\mu$ respectively.

Next, we construct a $p$-superharmonic function $u$ satisfying
\begin{equation}\label{eq:several_terms_localized}
- \laplacian_{p} u = \sum_{m = 1}^{M} \sigma^{(m)} u^{q_{m}} + \mu \quad \text{in} \ 2B 
\end{equation}
and
\begin{equation*}
\min\{ u, l \} \in \dot{W}_{0}^{1, p}(2B) \quad \forall l > 0.
\end{equation*}
Let $\{ v^{(m)} \} \subset \dot{W}_{0}^{1, p}(2B) \cap L^{\infty}(2B)$ be the weak solutions to
\[
- \laplacian_{p} v^{(m)} = \left( \frac{p - 1 - q_{m}}{p - 1} \right)^{p - 1} \sigma^{(m)}
\quad \text{in} \ 2B.
\]
For $m = 1, \dots M$,
put $u_{0}^{(m)} = c_{1}^{(m) }(v^{(m)})^{\frac{p - 1}{p - 1 - q_{m}}}$,
where $\{ c_{1}^{(m)} \}_{m = 1}^{M} \subset (0, 1]$ are constants
depending only on $n$, $p$ and $q$.
Then $u_{0}^{(m)}$ satisfy
\[
- \laplacian_{p} u_{0}^{(m)}
\leq \sigma_{k}^{(m)} (u_{0}^{(m)})^{q_{m}}
\quad \text{in} \ 2B.
\]
As in the proof of Lemma \ref{lem:main-p-laplacian},
we can choose small $\{ c_{1}^{(m)} \}_{m = 1}^{M}$ such that
\begin{equation}\label{upper_bound_of_uj_several}
w \geq u_{0}^{(m)} \quad \text{in $2B$} \quad (m = 1, \ldots M)
\end{equation}
whenever 
$w$ is a nonnegative $p$-superharmonic supersolution to \eqref{eq:several_terms}.
We define an increasing sequence of $p$-superharmonic functions
$\{  u_{j} \}_{j=1}^{\infty} \subset \dot{W}_{0}^{1, p}(2B)$ by
\[
- \laplacian_{p} u_{1}
= \sum_{m = 1}^{M} \sigma^{(m)} (u_{0}^{(m)})^{q_{m}} + \mu
\]
and
\begin{equation}\label{eq:def_of_uj@several_terms_localized}
- \laplacian_{p} u_{j+1}
= \sum_{m = 1}^{M} \sigma^{(m)} u_{j}^{q_{m}} + \mu, \quad  j=1,2, \dots.
\end{equation}
By Theorem \ref{pointwise_est_p-superharmonic} and \eqref{quasi-linearity},
for each $m$,
\[
\begin{split}
&
\| u_{j + 1} \|_{L^{\gamma + q_{m}}(2B, d \sigma^{(m)})} \\
& \leq
C \sum_{l = 1}^{M}
\| \W_{1, p}(u_{j}^{q_{l}} d \sigma^{(l)}) \|_{ L^{\gamma + q_{m}}(\R^{n}, d \sigma^{(m)}) }
+
C \| \W_{1, p} \mu \|_{L^{\gamma + q_{m}}(\R^{n}, d \sigma^{(m)})}.
\end{split}
\]
Since $\sigma^{(m)}$ satisfies \eqref{cond_sigma_several},
Theorem \ref{thm:mutual_energy_estimate} yields
\[
\begin{split}
\| \W_{1, p}(u_{j}^{q_{l}} d \sigma^{(l)}) \|_{L^{\gamma + q_{m}}(\R^{n}, d \sigma^{(m)})}
& =
\left(
\int_{\R^{n}} (\W_{1, p}(u_{j}^{q_{l}} d \sigma^{(l)}))^{\gamma + q_{m}} \, d \sigma^{(m)}
\right)^{\frac{1}{ \gamma + q_{m} } }
 \\
& \leq
C
\left(
\int_{\R^{n}} (\W_{1, p}(u_{j}^{q_{l}} d \sigma^{(l)}))^{\gamma} u_{j}^{q_{l}} \, d \sigma^{(l)}
\right)^{ \frac{1}{p - 1 + \gamma} }.
\end{split}
\]
Moreover, 
since $\sigma^{(l)}$ satisfies \eqref{cond_sigma_several},
by H\"{o}lder's inequality and Lemma \ref{lem:weighted_norm_inequality},
\begin{equation}\label{energy_several_terms}
\begin{split}
\left(
\int_{\R^{n}} (\W_{1, p}(u_{j}^{q_{l}} d \sigma^{(l)}))^{\gamma} u_{j}^{q_{l}} \, d \sigma^{(l)}
\right)^{ \frac{1}{p - 1 + \gamma} }
& \leq
C \| u_{j}^{q_{l}} \|_{L^{\frac{\gamma + q_{l}}{q_{l}}}(\R^{n}, d \sigma^{(l)})}^{\frac{1}{p - 1}}
\\
& =
C \| u_{j} \|_{L^{\gamma + q_{l}}(2B, d \sigma^{(l)})}^{\frac{q_{l}}{p - 1}}.
\end{split}
\end{equation}
By \eqref{cond_sigma_several}, \eqref{cond_mu_several}
and Theorem \ref{thm:mutual_energy_estimate},
\[
\| \W_{1, p} \mu \|_{L^{\gamma + q_{m}}(\R^{n}, d \sigma^{(m)})}
\leq
C.
\]
Therefore,
\[
\sum_{m = 1}^{M} \| u_{j + 1} \|_{L^{\gamma + q_{m}}(2B, d \sigma^{(m)})}
\leq
C \sum_{m = 1}^{M} \| u_{j} \|_{L^{\gamma + q_{m}}(2B, d \sigma^{(m)})}^{\frac{q_{m}}{p - 1}}
+
C.
\]
By \eqref{cond_q_several} and Young's inequality,
this implies that
\[
\sum_{m = 1}^{M} \| u_{j + 1} \|_{L^{\gamma + q_{m}}(2B, d \sigma^{(m)})}
\leq
C.
\]
Using \eqref{energy_several_terms} again,
we get a uniform bound corresponding to \eqref{energy_bound_of_u}.
Let $u = \lim_{j \to \infty} u$.
Then $u$ satisfies \eqref{eq:several_terms_localized}.
Moreover, by Corollary \ref{cor:lorentz_integrability_of_wolff_potentials},
\[
\begin{split}
\sum_{m = 1}^{M}
\left(
\int_{\R^{n}}
(\W_{1, p}(u^{q_{m}} d \sigma^{(m)} ))^{\gamma}
u^{q_{m}} \, d \sigma^{(m)}
\right)^{\frac{1}{p - 1 + \gamma}}
+
\| u \|_{L^{r, \rho}(2B)}
\leq
C.
\end{split}
\]
By Theorem \ref{pointwise_est_p-superharmonic},
for any $x \in B$,
\begin{equation}\label{lower_bound_several_terms}
\begin{split}
u(x)
& \geq
\max
\left\{
u_{0}^{(1)}(x), \dots, u_{0}^{(M)}(x),
\frac{1}{c_{K}} \W_{1, p}^{\frac{R}{2}} \mu(x)
\right\} \\
& \geq
\frac{1}{C}
\left\{
\sum_{m = 1}^{M} (\W_{1, p}^{\frac{R}{2}} \sigma^{(m)})^{ \frac{p - 1}{p - 1 - q} }(x)
+ \W_{1, p}^{\frac{R}{2}} \mu(x)
\right\}.
\end{split}
\end{equation}
If $w$ is a nonnegative $p$-superharmonic supersolution to \eqref{eq:several_terms},
then by \eqref{upper_bound_of_uj_several} and induction,
\begin{equation}\label{upper_bound_several_terms}
w \geq u \quad \text{in} \ 2B.
\end{equation}

Finally, we construct a minimal $p$-superharmonic solution to \eqref{eq:several_terms}.
For each $k \geq 1$, let $u_{k}$ be a $p$-superharmonic function
satisfying \eqref{eq:several_terms_localized}.
Passing to the limit $k \to \infty$, we get a $p$-superharmonic function
$u \in L^{r, \rho}(\R^{n})$ satisfying
$\nu[u] = \sum_{m = 1}^{M} \sigma^{(m)} u^{q_{m}} + \mu$.
By Theorem \ref{pointwise_est_p-superharmonic} and
Corollary \ref{cor:quasi-linearity_of_energy_space},
\begin{equation*}
\begin{split}
\int_{\R^{n}} u^{\gamma} \, d \nu[u]
& \approx
\int_{\R^{n}} (\W_{1, p} \nu[u])^{\gamma} \, d \nu[u] \\
& \approx
\sum_{m = 1}^{M}
\int_{\R^{n}} (\W_{1, p}(u^{q_{m}} d \sigma^{(m)} ))^{\gamma} u^{q_{m}} \, d \sigma^{(m)}
+
\int_{\R^{n}} (\W_{1, p} \mu)^{\gamma} \, d \mu.
\end{split}
\end{equation*}
Hence, $u$ satisfies \eqref{generalized_energy}.
Positivity and minimality of $u$ follow from
\eqref{lower_bound_several_terms} and \eqref{upper_bound_several_terms}, respectively.
Thus, $u$ is a minimal $p$-superharmonic solution to \eqref{eq:several_terms}.
\end{proof}

We also prove the uniqueness of finite energy solutions
using a convexity argument as in \cite{MR3311903} and \cite{MR4048382}.

\begin{Cor}
Assume that \eqref{cond_q_several}-\eqref{cond_mu_several} hold with $\gamma = 1$.
Then there exists a unique finite energy weak solution $u \in \dot{W}_{0}^{1, p}(\R^{n})$
satisfying \eqref{eq:several_terms}.
\end{Cor}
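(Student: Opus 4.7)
\emph{Existence.} My plan is to read off existence from the theorem just proved. Applying Theorem~\ref{thm:main-p-laplacian_several_terms} with $\gamma=1$ produces a minimal $p$-superharmonic solution $u$ to \eqref{eq:several_terms} with $\int_{\R^n}u\,d\nu[u]<\infty$. Since $\gamma=1\geq 1$, Corollary~\ref{cor:caccioppoli} gives $u\in W^{1,p}_{\loc}(\R^n)$ and $u$ is a weak solution. To upgrade to $u\in\dot W^{1,p}_0(\R^n)$ I would return to the approximating sequence $u_k$ used in the proofs of Lemma~\ref{lem:main-p-laplacian} and Theorem~\ref{thm:main-p-laplacian_several_terms}: each $u_k$ lies in $\dot W^{1,p}_0(2B_k)$ (its truncations do by construction, and the finite-energy test against $u_k$ itself is justified by the approximation scheme). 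Testing $-\Delta_p u_k = \nu[u_k]$ against $u_k$ gives the identity
\[
\int_{\R^n} |\nabla u_k|^p\,dx = \int_{\R^n} u_k\,d\nu[u_k],
\]
and the right side is uniformly bounded by the $\gamma=1$ case of \eqref{generalized_energy} together with Corollary~\ref{cor:quasi-linearity_of_energy_space}. Fatou's lemma then gives $\|\nabla u\|_{L^p(\R^n)}<\infty$, and standard density reasoning (using the zero-extensions of $u_k$ truncated at large heights) places $u$ in $\dot W^{1,p}_0(\R^n)$.

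\emph{Uniqueness via Picone.} Let $u_1,u_2\in\dot W^{1,p}_0(\R^n)$ be two nonnegative finite-energy weak solutions to \eqref{eq:several_terms}. By the strong minimum principle for $p$-super\-harmonic functions and the hypothesis $(\sigma^{(1)},\dots,\sigma^{(M)},\mu)\not\equiv 0$, both $u_i$ are strictly positive in $\R^n$. The plan is to use the Picone-type inequality (as in Lemma~\ref{lem:log_caccioppoli})
\[
\int_{\R^n}|\nabla u_1|^{p-2}\nabla u_1\cdot\nabla\!\left(\frac{u_2^p}{u_1^{p-1}}\right)dx
\leq \int_{\R^n}|\nabla u_2|^p\,dx,
\]
which on inserting the equation for $u_1$ and the energy identity for $u_2$ yields
\[
\sum_{m=1}^{M}\int_{\R^n}\frac{u_2^{p}}{u_1^{p-1-q_m}}\,d\sigma^{(m)}
+\int_{\R^n}\frac{u_2^p}{u_1^{p-1}}\,d\mu
\leq
\sum_{m=1}^{M}\int_{\R^n}u_2^{1+q_m}\,d\sigma^{(m)}+\int_{\R^n}u_2\,d\mu.
\]
Symmetrizing (swap $u_1\leftrightarrow u_2$) and adding, the resulting integrands are of the form
\[
A^{1-s}\!\left(\tfrac{B}{A}\right)^{\!1} \!+\! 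B^{1-s}\!\left(\tfrac{A}{B}\right)^{\!1}\!-\!A^{1-s}\!-\!B^{1-s}=(A-B)(B^{-s}-A^{-s}),
\]
with $A=u_1^p$, $B=u_2^p$ and $s=(p-1-q_m)/p\in(0,1)$ for the $\sigma^{(m)}$-terms, $s=(p-1)/p$ for the $\mu$-term. Since $x\mapsto x^{-s}$ is strictly decreasing, each integrand is nonnegative and vanishes only where $u_1=u_2$. Combined with the $\leq 0$ sign of the sum, this forces $u_1=u_2$ $\sigma^{(m)}$-a.e. for every $m$ and $\mu$-a.e., whence $\nu[u_1]=\nu[u_2]$. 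Finite-energy solutions of $-\Delta_p u=\nu$ with prescribed Riesz measure in $(\dot W^{1,p}_0(\R^n))^*$ are unique, so $u_1\equiv u_2$.

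\emph{Main technical obstacle.} The cleanest step is the algebraic identity $(A-B)(B^{-s}-A^{-s})\geq 0$; the real work is justifying the Picone test function $u_2^p/u_1^{p-1}$ in $\dot W^{1,p}_0(\R^n)$ when $u_1$ vanishes at infinity, which makes the quotient unbounded. To handle this I would mimic the proof of Lemma~\ref{lem:log_caccioppoli}: replace $u_1$ by $u_1+M^{-1}$ and $u_2$ by $\min\{u_2,M\}$, apply Picone to these bounded functions (where Lemma~\ref{lem:cp_rule} guarantees the product is a valid element of $\dot W^{1,p}_0$), and then send $M\to\infty$ using the monotone/dominated convergence theorems, with the integrability of the limiting integrands supplied by the finite-energy hypothesis and Theorem~\ref{thm:mutual_energy_estimate}. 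Once the inequality is secured in the limit, the positivity conclusion is immediate.
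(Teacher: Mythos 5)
Your existence argument mirrors the paper's (test the monotone approximants $u_{j+1}$ against themselves, obtain a uniform bound on $\int|\nabla u_{j+1}|^p\,dx$ from the energies $\|u_{j+1}\|_{L^{1+q_m}(d\sigma^{(m)})}$, $\|u_{j+1}\|_{L^1(d\mu)}$, and pass to the limit), so that part is essentially the same. The uniqueness argument, however, takes a genuinely different route. You compare two arbitrary finite-energy solutions $u_1,u_2$ by the symmetric Picone/Diaz--Saa device: apply Picone with test function $u_2^p/u_1^{p-1}$, swap roles, add, and factor the integrand as $(A-B)(B^{-s}-A^{-s})\ge 0$ with $A=u_1^p$, $B=u_2^p$ (the algebra checks out, and the exponents $s=(p-1-q_m)/p$ and $s=(p-1)/p$ are both in $(0,1)$). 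The paper instead first invokes minimality of one solution $v$ (so $u\ge v$ q.e.), interpolates $\lambda_t=((1-t)v^p+tu^p)^{1/p}$, and uses Brasco--Franzina hidden convexity together with the elementary inequality $|a|^p-|b|^p\ge p|b|^{p-2}b\cdot(a-b)$; the effect is that one only ever tests with $\lambda_t-\lambda_0\in\dot W_0^{1,p}(\R^n)$, a bona fide finite-energy function, and lets $t\to 0$ at the end, sidestepping the delicate justification of $u^p/v^{p-1}$ as a test function. Your route is more symmetric (it does not lean on minimality of one of the two solutions), but it shifts the technical burden onto making the Picone test rigorous when $u_1$ tends to zero at infinity and $\nu[u_1]$ need not be a finite measure; you correctly flag this as the main obstacle. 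The remedy you propose --- replace $u_1$ by $u_1+M^{-1}$ and $u_2$ by $\min\{u_2,M\}$, then monotone convergence --- is exactly the mechanism already used in Lemma~\ref{lem:log_caccioppoli}, and the resulting inequality $\int u_2^p u_1^{1-p}\,d\nu[u_1]\le\int|\nabla u_2|^p\,dx<\infty$ also supplies the integrability needed to split the symmetrized integral term by term, so the plan closes. Both strategies then finish identically: $u_1=u_2$ a.e.\ with respect to each $\sigma^{(m)}$ and to $\mu$, hence $\nu[u_1]=\nu[u_2]$, and strict monotonicity of $\xi\mapsto|\xi|^{p-2}\xi$ forces $u_1=u_2$ in $\dot W_0^{1,p}(\R^n)$.
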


\begin{proof}
Existence of a minimal solution follow from
Theorem \ref{thm:main-p-laplacian_several_terms}.
Testing \eqref{eq:def_of_uj@several_terms_localized} with $u_{j + 1}$
and using monotonicity of $\{ u_{j} \}_{j = 1}^{\infty}$,
we have
\[
\begin{split}
\int_{2B} |\nabla u_{j + 1}|^{p} \, dx
& =
\sum_{m = 1}^{M}
\int_{2B} u_{j + 1} u_{j}^{q_{m}} \, d \sigma^{(m)}
+
\int_{2B} u_{j + 1} \, d \mu
\\
& \leq
\sum_{m = 1}^{M}
\| u_{j + 1} \|_{L^{1 + q_{m}}(2B, d \sigma^{(m)})}^{1 + q_{m}}
+
\| u_{j + 1} \|_{L^{1}(2B, d \mu)}
\leq 
C,
\end{split}
\]
where $C$ is a constant depending only on $n$, $p$ and
\eqref{cond_q_several}-\eqref{cond_mu_several}.
Thus, the limit function $u$ belongs to $\dot{W}_{0}^{1, p}(\R^{n})$.

Let us prove uniqueness.
For simplicity, we put $\sigma^{(0)} = \mu$ and $q_{0} = 0$.
Let $u, v \in \dot{W}_{0}^{1, p}(\R^{n})$ be weak solutions to \eqref{eq:several_terms}.
Without loss of generality,
we may assume that $u$ and $v$ are quasicontinuous on $\R^{n}$
and that $v$ is minimal.
Hence, $u \geq v$ q.e. in $\R^{n}$.
Since each $\sigma^{(m)}$ is absolutely continuous with respect to the $p$-capacity,
$u \geq v$ $d \sigma^{(m)}$-a.e. for all $m = 0, 1, \dots, M$.

Testing the equations of $u$ with $u$,
we have
\begin{equation}\label{lem:uniqueness_several_04}
\int_{\R^{n}}
| \nabla u |^{p}
\, dx
=
\sum_{m = 0}^{M}
\int_{\R^{n}} u^{1 + q_{m}} \,d \sigma^{(m)}.
\end{equation}
By the same way,
\begin{equation}\label{lem:uniqueness_several_05}
\int_{\R^{n}}
| \nabla v |^{p}
\, dx
=
\sum_{m = 0}^{M}
\int_{\R^{n}} v^{1 + q_{m}} \,d \sigma^{(m)}.
\end{equation}
For $t \in [0, 1]$, we set
\[
\lambda_{t}(x)
:=
\left(
(1 - t) v^{p}(x) + t u^{p}(x)
\right)^{\frac{1}{p}}.
\]
Then
\begin{equation}\label{lem:uniqueness_several_07}
0 \leq v \leq \lambda_{t} \leq u
\quad \text{q.e. in $\R^{n}$.}
\end{equation}
Moreover, by the hidden convexity (see \cite[Proposition 2.6]{MR3273896}),
\begin{equation}\label{lem:uniqueness_several_06}
\int_{\R^{n}} |\nabla \lambda_{t} |^{p} \, dx
\leq
(1 - t) \int_{\R^{n}} |\nabla v |^{p} \, dx
+
t \int_{\R^{n}} |\nabla u |^{p} \, dx.
\end{equation}
Combining
\eqref{lem:uniqueness_several_04}, \eqref{lem:uniqueness_several_05}
and \eqref{lem:uniqueness_several_06},
we get
\begin{equation*}\label{lem:uniqueness_several_03}
\int_{\R^{n}} 
\frac{ |\nabla \lambda_{t} |^{p} - |\nabla \lambda_{0} |^{p} }{t} 
\, dx
\leq
\sum_{m = 0}^{M}
\left(
\int_{\R^{n}} u^{1 + q_{m}} \,d \sigma^{(m)}
-
\int_{\R^{n}} v^{1 + q_{m}} \,d \sigma^{(m)}
\right).
\end{equation*}
By the inequality
\[
|a|^{p} - |b|^{p}
\geq
p |b|^{p - 2} b \cdot (a - b)
\quad
(a, b \in \R^{n}),
\]
we have
\[
|\nabla \lambda_{t}|^{p} - |\nabla \lambda_{0}|^{p} 
\geq
p |\nabla \lambda_{0}|^{p - 2} \nabla  \lambda_{0} \cdot (\nabla \lambda_{t} - \nabla \lambda_{0}).
\]
Therefore,
\begin{equation}\label{lem:uniqueness_several_01}
\begin{split}
&
p \int_{\R^{n}} 
|\nabla v|^{p - 2} \nabla v \cdot \frac{ \nabla ( \lambda_{t} -  \lambda_{0} ) }{t} 
\, dx \\
& \leq
\sum_{m = 0}^{M}
\left(
\int_{\R^{n}} u^{1 + q_{m}} \,d \sigma^{(m)}
-
\int_{\R^{n}} v^{1 + q_{m}} \,d \sigma^{(m)}
\right).
\end{split}
\end{equation}
On the other hand,
using methods in \cite[Lemma 1.25]{MR2305115},
we can see that $\lambda_{t} \in \dot{W}_{0}^{1, p}(\R^{n})$
from \eqref{lem:uniqueness_several_07} and \eqref{lem:uniqueness_several_06}.
Testing the equation of $v$ with
$\lambda_{t} - \lambda_{0} \in \dot{W}_{0}^{1, p}(\R^{n})$, we get
\begin{equation}\label{lem:uniqueness_several_02}
\begin{split}
\int_{\R^{n}} 
|\nabla v|^{p - 2} \nabla v \cdot \nabla ( \lambda_{t} -  \lambda_{0} )
\, dx
=
\sum_{m = 0}^{M}
\int_{\R^{n}}  ( \lambda_{t} -  \lambda_{0} ) v^{q_{m}} \,d \sigma^{(m)}.
\end{split}
\end{equation}
Combining \eqref{lem:uniqueness_several_01} and \eqref{lem:uniqueness_several_02},
we obtain
\[
\sum_{m = 0}^{M}
p \int_{\R^{n}}  
\frac{ ( \lambda_{t} -  \lambda_{0} ) }{t} v^{q_{m}} \,d \sigma^{(m)}
\leq
\sum_{m = 0}^{M}
\left(
\int_{\R^{n}} u^{1 + q_{m}} \,d \sigma^{(m)}
-
\int_{\R^{n}} v^{1 + q_{m}} \,d \sigma^{(m)}
\right).
\]
Note that $\lambda_{t} - \lambda_{0} \geq 0$ $d \sigma^{(m)}$-a.e. 
for all $m$.
Therefore, by Fatou's lemma,
\[
\int_{\R^{n}} 
\left(
\frac{u^{p}}{v^{p - 1}} - v
\right)
v^{q_{m}} \,d \sigma^{(m)}
\leq
\liminf_{t \to 0}
p \int_{\R^{n}}  
\frac{ ( \lambda_{t} -  \lambda_{0} ) }{t} v^{q_{m}} \,d \sigma^{(m)}
\]
for each $m$.
Hence, passing to the limit $t \to 0$, we arrive at
\[
\sum_{m = 0}^{M}
\int_{\R^{n}} 
\left(
\frac{u^{p}}{v^{p - 1}} - v
\right)
v^{q_{m}} \,d \sigma^{(m)}
\leq
\sum_{m = 0}^{M}
\left(
\int_{\R^{n}} u^{1 + q_{m}} \,d \sigma^{(m)}
-
\int_{\R^{n}} v^{1 + q_{m}} \,d \sigma^{(m)}
\right),
\]
or equivalently
\[
\sum_{m = 0}^{M}
\int_{\R^{n}} 
\left(
\frac{ u^{p} v^{ q_{m} } }{v^{p - 1}} 
- 
u^{1 + q_{m}}
\right)
\,d \sigma^{(m)}
\leq 0.
\]
By minimality of $v$,
$v^{q_{m} - p + 1} \geq u^{q_{m} - p + 1}$ $d \sigma^{(m)}$-a.e. for all $m$,
and hence, each integral on the left-hand side is nonnegative.
Thus, $u = v$ $d \sigma^{(m)}$-a.e. for all $m$.

Testing the equations of $u$ and $v$ with $u - v \in \dot{W}_{0}^{1, p}(\R^{n})$, we get
\[
\int_{\R^{n}}
\left( |\nabla u|^{p - 2} \nabla u - |\nabla v|^{p - 2} \nabla v \right)
\cdot \nabla (u - v)
\, dx
= 0.
\]
This implies that $\nabla u = \nabla v$ a.e. in $\R^{n}$ (see, e.g., \cite[Lemma 5.6]{MR2305115}),
and therefore, $u = v$ in $\dot{W}_{0}^{1, p}(\R^{n})$.
\end{proof}

\section*{Acknowledgments}
The authors would like to thank the referee for his/her valuable comments 
which helped to improve the manuscript.
This work was supported by JSPS KAKENHI Grant Number JP18J00965 and JP17H01092.
%This is a pre-print of an article published in \textit{Nonlinear Analysis}.
%The final authenticated version is available online at: \url{https://doi.org/10.1016/j.na.2020.111847}.

%%%%%%%%%%%%%%%%%%%%%%%%%%%%%%%%%%%%%%%%
% References
%%%%%%%%%%%%%%%%%%%%%%%%%%%%%%%%%%%%%%%%

\bibliographystyle{abbrv} % plain, alpha, abbrv, unsrt
\bibliography{reference}
% \nocite{*}

%%%%%%%%%%%%%%%%%%%%%%%%%%%%%%%%%%%%%%%%
% The end of the document
%%%%%%%%%%%%%%%%%%%%%%%%%%%%%%%%%%%%%%%%

\end{document}